\newtheorem{theorem}{Theorem}[section]
\newtheorem{proposition}[theorem]{Proposition}
\newtheorem{lemma}[theorem]{Lemma}
\newtheorem{corollary}[theorem]{Corollary}
\newtheorem{definition}[theorem]{Definition}
\newtheorem{example}[theorem]{Example}
\newtheorem{remark}[theorem]{Remark}
\begin{document}
\title{On measures of accretion and dissipation for solutions of the Camassa-Holm equation\\
\small{}}
\author{{Grzegorz Jamr\'oz} \\
 {\it \small  Institute of Mathematics, Polish Academy of Sciences, ul. \'Sniadeckich 8, 00-656 Warszawa, Poland}\\
{\it \small e-mail: jamroz@impan.pl}
}

\maketitle

\abstract{We investigate the measures of dissipation and accretion related to the weak solutions of the Camassa-Holm equation. Demonstrating certain properties of nonunique characteristics, we prove a new representation formula for these measures and conclude about their structural features, such us singularity with respect to the Lebesgue measure. We apply these results to gain new insights into the structure of weak solutions, proving in particular that measures of accretion vanish for dissipative solutions of the Camassa-Holm equation.}
\newline \, \\
{\bf Keywords:}  Camassa-Holm, weak solution, dissipative solution, generalized characteristics, measure of accretion, measure of dissipation\\
{\bf MSC Classfication 2010:} 35L65, 37K10

\section{Introduction}
In this paper we study the Camassa-Holm equation (\cite{CH}), 
\begin{equation}
\label{Eq_CHoriginall}
u_t - u_{xxt} + 3uu_x = 2u_x u_{xx} + uu_{xxx},
\end{equation}
which is a one-dimensional model of unidirectional water wave propagation in shallow canals, where $t$ denotes time, $x$ is a one-dimensional space variable and $u(t,x)$ corresponds to the horizontal velocity of the water surface (see \cite{CL}). Equation \eqref{Eq_CHoriginall} had been previously derived, though without the physical context of \cite{CH}, by Fokas and Fuchssteiner \cite{FF1} as a bihamiltonian generalization of the celebrated Korteweg-de Vries equation (KdV) and is formally integrable \cite{ConMac}. The hallmark of the Camassa-Holm equation, which makes it one of the most relevant models of the shallow water theory, is that it accounts both for peaked solitons (\cite{CH}) and, unlike the KdV equation \cite{KPV}, yet similarly as the Whitham equation \cite{Whitham}, for wave-breaking (see \cite{CE3}). 

 The well-posedness theory for \eqref{Eq_CHoriginall} in the case of smooth solutions is due to A. Constantin and J. Escher \cite{CE1,CE2}. The same authors formulated also fairly general conditions precluding wave-breaking \cite{CE3}. Since then, other criteria have been proposed and  recently Brandolese \cite{Brandolese} has obtained a general criterion, which nicely encompasses most previous non-breaking criteria, see references therein. Nevertheless, for general initial data the wave-breaking, understood as blow-up of the  $L^{\infty}$ norm of the derivative $u_x$, is unavoidable. Importantly, however, the Camassa-Holm equation preserves physical relevance also after wave breaking and thus it is worth to study weak (i.e. non-smooth) solutions after the breaking time. This can be done in the framework provided by (with $*$ denoting convolution)
\begin{eqnarray}
\partial_t u + \partial_x (u^2 / 2) + P_x &=& 0, \label{eq_WeakCH1}\\
P(t,x) &=& \frac 1 2 e^{-|x|}* \left(u^2(t,\cdot) + \frac {u_x^2(t,\cdot)}{2}\right),\\
u(t=0, \cdot) &=& u_0, \label{eq_WeakCH3}
\end{eqnarray}
which for smooth solutions is equivalent to \eqref{Eq_CHoriginall}, with equivalence  provided through the nonlocal operator $(I-\partial_{xx})^{-1}$, which satisfies $(I-\partial_{xx})(\frac 1 2 e^{-|x|}) = \delta(x)$. 

Solutions of \eqref{eq_WeakCH1}-\eqref{eq_WeakCH3} are considered in the space $L^{\infty}([0,\infty),H^1(\mathbb{R}))$, which corresponds to the maximal physically relevant class of solutions with bounded total energy, given by
%and it is this class, supplemented by continuity assumption, which we we will be considering in this paper. Note taht the total energy, given by 
$$E(t) = \frac 1 2 \int_{\mathbb{R}} (u^2(t,x)+u_x^2(t,x)) dx.$$
Let us note that the total energy remains constant for smooth solutions (see e.g.  introduction of \cite{BC2}), yet fails to be so for weak solutions, for which it can both increase and decrease. This allows the weak solutions to account for many phenomena such as soliton interactions, energy dissipation etc.

\begin{definition}[Weak solutions]
Let $u_0 \in H^1(\mathbb{R})$. We say that a function $u: [0,\infty) \times \mathbb{R} \to \mathbb{R}$ is a \emph{weak solution} of \eqref{eq_WeakCH1}-\eqref{eq_WeakCH3} if
\begin{itemize}
\item $u(t,x) \in C([0,\infty) \times \mathbb{R}) \cap L^{\infty}([0,\infty), H^1(\mathbb{R})),$
\item $u(t=0,x) = u_0(x)$ for $x \in \mathbb{R}$,
\item $u(t,x)$ satisfies \eqref{eq_WeakCH1} in the sense of distributions.
\end{itemize}
\end{definition}
Existence of weak solutions of the Camassa-Holm equation was proven by Xin and Zhang in \cite{XZ} by the method of vanishing viscosity, which resulted in the so-called \emph{dissipative weak solutions}.

\begin{definition}[Dissipative weak solutions]
\label{Def_dissipative}
A weak solution of \eqref{eq_WeakCH1}-\eqref{eq_WeakCH3} is called \emph{dissipative} if
\begin{itemize}
\item $\partial_x u(t,x) \le const \left(1+ \frac 1 t\right)$  (Oleinik-type condition)
\item $\|u(t,\cdot)\|_{H^1(\mathbb{R})} \le  \|u(0,\cdot)\|_{H^1(\mathbb{R})}$ for every $t > 0$ (weak energy condition).
\end{itemize}
\end{definition}
Dissipative weak solutions are only one of the many classes of (extremely nonunique) weak solutions of \eqref{eq_WeakCH1}-\eqref{eq_WeakCH3}. Other classes include the conservative  \cite{BC} solutions, which satsisfy a supplementary conservation law ensuring local conservation of the energy, and intermediate \cite{GHR2} solutions, which interpolate between the conservative and dissipative ones. The uniqueness of conservative solutions was demonstrated by Bressan and Fonte \cite{BF} by use of a distance functional related to the optimal transportation problem, see also  \cite{HR2,GHR, GHR1} and \cite{BCZ} for more recent proofs, based on alternative techniques.
The question of uniqueness of dissipative solutions, on the other hand, remained for many years one of the eminent unresolved open problems of the theory and, until recently, only constructions of global semigroups of solutions \cite{BC2,HR} were available. 
The issue has been finally resolved affirmatively in \cite{GJA}, relying on the framework introduced in \cite{CHGJ}, the uniqueness result from \cite{BC2} and ideas similar in spirit to \cite{BCZ}.

In this paper we focus on studying in detail the structure of weak solutions of the Camassa-Holm equation, having in mind another open question of the theory: 
\emph{Is the dissipative weak solution the unique one, which dissipates the energy at the highest possible rate within the class of all weak solutions with the same initial data?} 

The relevance of this question (the so called \emph{maximal dissipation criterion}) in evolutionary equations has been highlighted  by Dafermos \cite{DafMax,DafMAX} and more specifically in the context of the related Hunter-Saxton equation by Zhang and Zheng \cite{ZhangZheng}. The hypothesis of Zhang-Zheng was finally resolved in \cite{TCGJ}, see also references therein, and  it is indeed the framework from \cite{TCGJ}, relying on the ideas of Dafermos regarding generalized characteristics (\cite{DafHS,DafGC}), and ported to the Camassa-Holm setting in \cite{CHGJ}, which provides inspiration for the research in the present paper. At any rate, it is clear that to approach the maximal dissipation criterion, one has to understand the behavior of general weak solutions and this paper is a step in this direction.

To fix some ideas let us in the remainder of this introductory part consider a generic example, which presents possible nonuniqueness scenarios in the class of weak solutions of \eqref{eq_WeakCH1}-\eqref{eq_WeakCH3} and represents one of the few known explicit solutions of the Camassa-Holm equation (see however \cite{Len, Len2}). 
This example provides also some insight into the definition of dissipative weak solutions; it shows in particular that the Oleinik-type condition in Definition \ref{Def_dissipative}, which is an analog of the Oleinik entropy criterion, known from the theory of hyperbolic conservation laws (see e.g. \cite[Section 3b]{EVANS}), is insufficient for selecting the unique dissipative solution and thus the weak energy condition is indispensable.

\begin{example}[Peakon-antipeakon interaction, creation of the peakon-antipeakon pair]
\label{Ex_peakantipeak}
Function
\begin{equation}
\label{Eq_uupeakon}
u(t,x) = p_1(t) e^{-|x-q_1(t)|} - p_1(t)e^{-|x+q_1(t)|}
\end{equation}
where $q(0)<0, p_1(0)>0, p_1(t)= \frac 1 2 p(t), q_1(t) = \frac 1 2 q(t)$ and 
\begin{eqnarray}
p(t) &=& H_0 \frac {[p(0) + H_0]+ [p(0) - H_0]e^{H_0 t}}{[p(0)+H_0] - [p(0)-H_0]e^{H_0 t}}, \label{Eq_formp}\\
q(t) &=& q(0) - 2 \log \frac {[p(0)+H_0]e^{-H_0t \slash 2} + [p(0) - H_0]e^{H_0t \slash 2}}{2p(0)},\nonumber\\
H_0^2 &=& p(0)^2 (1 - e^{q(0)}) = p(t)^2 (1 - e^{q(t)})  \mbox{ for every } t>0 \nonumber
\end{eqnarray}
is, see \cite{BC}, a weak solution of the Camassa-Holm equation with invariant $H_0$. It is composed of two solitary waves -- peakon $p_1(t) e^{-|x-q_1(t)|}$, centred at $q_1(t)$ and moving to the right, and antipeakon $-p_1(t) e^{-|x+q_1(t)|}$ moving to the left (see Fig. \ref{Fig3}). At time $T = \frac 1 {H_0} \log \frac{p(0)+H_0}{p(0)-H_0}$ the two waves annihilate giving rise to a singularity characterized by  
\begin{figure}[h!]
\center
\includegraphics[width=14cm]{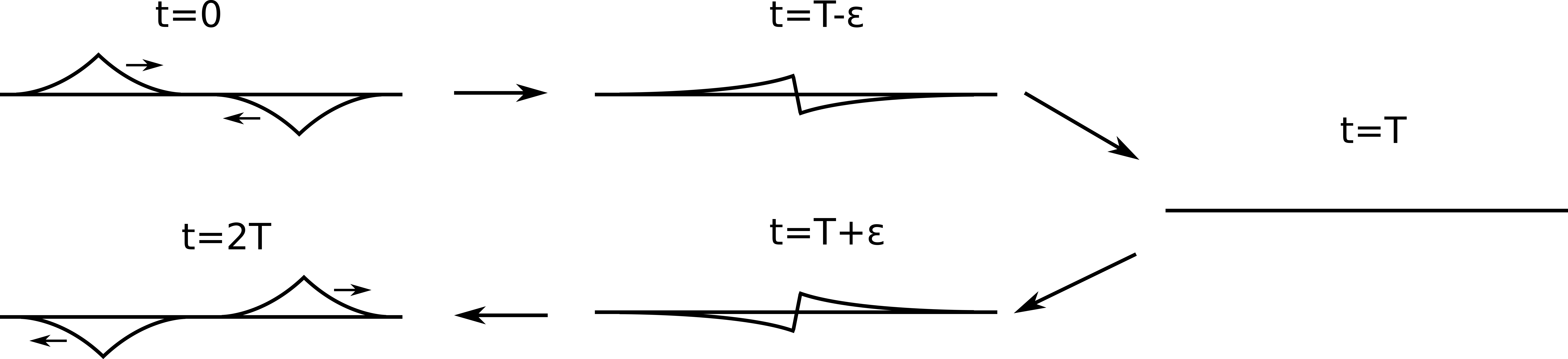}
\caption{Schematic presentation of a conservative  peakon-antipeakon interaction. Two initial waves -- peakon (positive amplitude, moving to the right) and antipeakon (negative amplitude, moving to the left) interact, which leads to their annihilation at the critical time $T$ (wave-breaking). Then they reemerge as a peakon and antipeakon moving away from one another. Considering $T$ as initial time we obtain a weak solution of the Camassa-Holm equation corresponding to creation of a peakon-antipeakon pair out of nothing.}
\label{Fig3}
\end{figure}
\begin{eqnarray*}
\lim_{t \to T^-} \sup_x |u(t,x)| = 0
\end{eqnarray*}
and
\begin{equation}
\label{Eq_e}
\lim_{t \to T^-} e(t,x) = \lim_{t \to T^-} \frac 1 2 (u^2(t,x) + u_x^2(t,x)) = \lim_{t \to T^-} \frac 1 2  ((u_x^-)^2(t,x)) = H_0^2\delta_0(dx),
\end{equation}
where the limits are taken in the weak sense, $u_x^- = \max(0,-u_x)$ is the negative part of $u_x$ and $e(t,\cdot)$ is the energy density. To estimate the derivative of the solution we differentiate \eqref{Eq_uupeakon} obtaining
\begin{equation*}
u_x = -p_1 sgn(x-q_1)e^{-|x-q_1|} + p_1 sgn(x+q_1) e^{-|x+q_1|}
%p e^{q/2} cosh(x) 
\end{equation*}
and hence $|u_x| \le 2p_1 = p.$
By \eqref{Eq_formp} we have:
\begin{eqnarray*}
\frac {p(t)}{H_0} &=& \frac {[p(0) + H_0]+ [p(0) - H_0]e^{H_0 T}e^{H_0 (t-T)}}{[p(0)+H_0] - [p(0)-H_0]e^{H_0 T}e^{H_0 (t-T)}} = \frac {[p(0) + H_0]+ [p(0) + H_0]e^{H_0 (t-T)}}{[p(0)+H_0] - [p(0)+H_0]e^{H_0 (t-T)}} \\
&=& \frac {1+e^{H_0 (t-T)}}{1 - e^{H_0 (t-T)}} = \frac {e^{H_0 (T-t)} + 1}{e^{H_0 (T-t)}-1} 
 \le \frac {e^{H_0 T}+1}{H_0(T-t)},
\end{eqnarray*}
and consequently 
\begin{equation}
\label{Eq_boundux}
|u_x| \le C(T)/(T-t).
\end{equation}
By setting  
$$u(t,x) = -u(2T- t,x)$$
for $t>T$ we can prolong the solution beyond the blow-up time. This 'conservative' prolongation can be interpreted as reemergence after interaction of the peakon-antipeakon pair, which are moving now away from one another (see Fig. \ref{Fig3}). Note that if we set $u(t,x)=0$ for $t>T, x\in \mathbb{R}$ we would obtain a 'dissipative' prolongation.

Finally, consider $w(t,x):= u(t+T,x)$ defined for $t \ge 0$. As a simple translation in time of $u$, $w$ is a weak solution of \eqref{eq_WeakCH1}-\eqref{eq_WeakCH3}, which satisfies $w(t=0,\cdot) \equiv 0$. Thus, $w$ represents \emph{creation of a peakon-antipeakon pair}.
Due to estimate \eqref{Eq_boundux} we obtain 
\begin{equation*}
|w_x(t,x)| \le \tilde{C}/t,
\end{equation*}
Thus the Oleinik-type criterion from Definition \ref{Def_dissipative} is satisfied. The weak energy condition in Definition \ref{Def_dissipative}, on the other hand, is violated. And indeed, given the initial condition $w(t=0,\cdot)\equiv 0$, the 'entropy solution', called in the context of Camassa-Holm 'dissipative weak solution', is a function equal identically $0$. The weak energy condition from Definition \ref{Def_dissipative} allows us to distinguish between these two scenarios. 
\end{example}
Let us point out that
\begin{itemize}
\item creation of a peakon-antipeakon pair, as presented in Example \ref{Ex_peakantipeak}, can occur at any point of spacetime, which means that weak solutions can, in general, exhibit very complex structures and are highly nonunique,
\item creation of a peakon-antipeakon pair involves creation of a finite portion of energy at a given point in spacetime; similarly, annihilation of a peakon-antipeakon pair corresponds to annihilation of a finite portion of energy,
\item the creation (and annihilation) of a finite portion of energy at any single timepoint can also be spread over the space, leading to a distribution, which, as we will see in the following, can be accounted for by a measure,
\item energy may be also transfered in a continuous fashion (as is the case e.g. for  $t<T$ or $t>T$ in Example \ref{Ex_peakantipeak}). 
\end{itemize}

Our goal in this paper, motivated by the above observations, is to gain new insights into how the energy is accrued/created and dissipated/annihilated in arbitrary  weak solutions of \eqref{eq_WeakCH1}-\eqref{eq_WeakCH3}. 
To this end, we define rigorously and study the so-called \emph{accretion} and \emph{dissipation} measures (introduced informally in \cite{CHGJ}) and obtain some qualitative results regarding their structure. In particular, we show (see Section \ref{Sec_Mresults} for rigorous formulations) that creation of energy at a given timepoint corresponds to a measure which is necessarily singular with respect to the one-dimensional Lebesgue measure and thus cannot be arbitrary. Since, as we demonstrate, the creation of energy for dissipative solutions cannot occur on a singular set, we conclude that for dissipative weak solutions measures of accretion vanish. In the same vein, we show that if a weak solution is to dissipate the energy at the highest possible rate then measures of accretion have to vanish.

The structure of the paper is the following. In Section \ref{Sec_Mresults} we present our main results, recalling first the key results from \cite{CHGJ}. In Section \ref{Sec_Characteristics} we demonstrate some new properties of nonunique characteristics, which we then use in Section \ref{Sec_proofMain} to prove a representation formula for measures of accretion/dissipation.
Finally, in Section \ref{Sec_proofsConclusions} we demonstrate, using the new representation formula, the conclusions regarding the structure 
of weak and dissipative weak solutions of the Camassa-Holm equation.

{\bf Acknowledgements.} This research was partially conducted at the University of Basel in the framework of the Swiss Government Excellence Scholarship for Foreign Scholars and Artists for the Academic Year 2015-2016, grant no. 2015.0079. The author is grateful to Gianluca Crippa from the University of Basel for hospitality. He is furthermore grateful to Tomasz Cie\'slak from the Institute of Mathematics, Polish Academy of Sciences in Warsaw for useful discussions regarding this paper.

\section{Main results}
\label{Sec_Mresults}
In this section we present our main results. Since they rely heavily on the framework developed in \cite{CHGJ}, let us first
recall the key results of \cite{CHGJ}, which, although more general, for the purposes of the present paper have been restricted to the Camassa-Holm equation.

The first proposition (Proposition 2.4 from \cite{CHGJ}, which follows from the Peano existence theorem and \cite[Lemma 3.1]{DafHS}) asserts that characteristics, although in general nonunique, exist.

\begin{proposition}
Let $u$ be a weak solution of \eqref{eq_WeakCH1}-\eqref{eq_WeakCH3}. Then for every $\zeta \in \mathbb{R}$ there exists a (nonunique) characteristic of $u$ emanating from $\zeta$, i.e. a function $\zeta : [0,\infty) \to \mathbb{R}$, which satisfies:
\begin{itemize}
\item $\zeta(0)=\zeta$,
\item $\frac d {dt} \zeta(t) = u(t,\zeta(t)),$
\item $\frac d {dt} u(t,\zeta(t)) = -P_x(t,\zeta(t)).$
\end{itemize}
\end{proposition}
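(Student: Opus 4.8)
The plan is to decouple the statement into its two differential conditions and treat them with different tools, exactly along the lines suggested by the references to Peano's theorem and \cite[Lemma 3.1]{DafHS}. First I would record the regularity that the definition of weak solution already supplies: $u$ is continuous and, since $u(t,\cdot)\in H^1(\mathbb{R})$ with uniformly bounded norm, it is bounded, while $P(t,x)=\frac12 e^{-|x|}*(u^2+u_x^2/2)$ and its spatial derivative $P_x(t,x)=-\frac12\int \mathrm{sgn}(x-y)e^{-|x-y|}(u^2+u_x^2/2)(t,y)\,dy$ are bounded and continuous in $(t,x)$. With these facts the first two bullet points follow from Peano's existence theorem: the scalar ODE $\dot{x}=u(t,x)$, $x(0)=\zeta$, has continuous right-hand side, so a solution $\zeta(\cdot)$ exists on every finite interval and, since $|\dot\zeta|=|u(t,\zeta)|\le\|u\|_{L^\infty}$ precludes finite-time blow-up, extends to all of $[0,\infty)$. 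Uniqueness is not claimed, because $u$ is merely continuous and not Lipschitz in $x$.

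The substance of the proposition is the third bullet, namely that along such a characteristic the map $t\mapsto u(t,\zeta(t))$ is differentiable with $\frac{d}{dt}u(t,\zeta(t))=-P_x(t,\zeta(t))$. Formally this is just the material derivative of the equation $u_t+uu_x=-P_x$, but $u$ is not differentiable, so I would regularize in space. Fixing a standard mollifier $\rho_\epsilon(z)=\epsilon^{-1}\rho(z/\epsilon)$ with $\rho\ge0$ supported in $[-1,1]$ and $\int\rho=1$, and setting $u^\epsilon:=u(t,\cdot)*\rho_\epsilon$, mollification of the weak equation in $x$ gives the pointwise identity $u^\epsilon_t=-(P_x)*\rho_\epsilon-(uu_x)*\rho_\epsilon$, where $u^\epsilon$ is smooth in $x$ and Lipschitz in $t$ (both right-hand terms are bounded uniformly). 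Since $\zeta$ is $C^1$, the chain rule yields, for a.e. $t$,
\[
\frac{d}{dt}u^\epsilon(t,\zeta(t)) = u^\epsilon_t(t,\zeta(t))+u(t,\zeta(t))\,(u_x*\rho_\epsilon)(t,\zeta(t)).
\]
Introducing the commutator $R^\epsilon:=(uu_x)*\rho_\epsilon-u\,(u_x*\rho_\epsilon)$, the two advective contributions cancel and I am left with $\frac{d}{dt}u^\epsilon(t,\zeta(t))=-(P_x)*\rho_\epsilon(t,\zeta(t))-R^\epsilon(t,\zeta(t))$. Integrating in time and letting $\epsilon\to0$, the locally uniform convergences $u^\epsilon\to u$ and $(P_x)*\rho_\epsilon\to P_x$ recover the desired identity in integrated form, \emph{provided} the integrated commutator $\int_0^t R^\epsilon(s,\zeta(s))\,ds$ tends to zero.

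The main obstacle is precisely this last point: the DiPerna--Lions commutator estimate controls $R^\epsilon$ only in $L^1_{loc}$ of the $(t,x)$-plane, whereas here it is evaluated along the single, possibly irregular, curve $\zeta$. I would circumvent this by using the one-dimensional Sobolev structure directly. Writing $R^\epsilon(s,\zeta(s))=\int\rho_\epsilon(\zeta(s)-y)\,[u(s,y)-u(s,\zeta(s))]\,u_x(s,y)\,dy$ and using the Cauchy--Schwarz bound $|u(s,y)-u(s,\zeta(s))|\le |y-\zeta(s)|^{1/2}\,A_\epsilon(s)^{1/2}$ on the support of $\rho_\epsilon(\zeta(s)-\cdot)$, where $A_\epsilon(s):=\int_{|r-\zeta(s)|\le\epsilon}u_x^2(s,r)\,dr$, a short computation gives $|R^\epsilon(s,\zeta(s))|\le C\,A_\epsilon(s)$ with $C$ depending only on $\rho$. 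For a.e. $s$ the quantity $A_\epsilon(s)\to0$ as $\epsilon\to0$ by absolute continuity of the integral of $u_x^2(s,\cdot)\in L^1(\mathbb{R})$, while $A_\epsilon(s)\le\|u_x(s,\cdot)\|_{L^2}^2$ is bounded uniformly in $s$ and $\epsilon$ because $u\in L^\infty([0,\infty),H^1(\mathbb{R}))$; dominated convergence then forces $\int_0^t R^\epsilon(s,\zeta(s))\,ds\to0$, as required.

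Finally, having established the absolutely continuous identity $u(t,\zeta(t))-u(0,\zeta)=-\int_0^t P_x(s,\zeta(s))\,ds$, I would invoke continuity of $P_x$ in $(t,x)$ together with continuity of $s\mapsto\zeta(s)$ to conclude that the integrand $s\mapsto P_x(s,\zeta(s))$ is continuous, whence $t\mapsto u(t,\zeta(t))$ is genuinely $C^1$ with $\frac{d}{dt}u(t,\zeta(t))=-P_x(t,\zeta(t))$. This verifies the third bullet and completes the proof; I expect the commutator-along-a-curve step of the previous paragraph to be the only delicate part, the rest being the standard Peano argument combined with soft properties of the nonlocal term $P$.
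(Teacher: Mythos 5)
The paper offers no proof of this proposition at all---it is imported verbatim from \cite{CHGJ} with the one-line attribution ``Peano existence theorem plus \cite[Lemma 3.1]{DafHS}''---so your self-contained argument is necessarily a different route, and a legitimate one. The Peano part is exactly what is intended, and your treatment of the third bullet by spatial mollification, cancellation of the advective terms, and the commutator bound $|R^\epsilon(s,\zeta(s))|\le C\,A_\epsilon(s)$ with $A_\epsilon(s)=\int_{|r-\zeta(s)|\le\epsilon}u_x^2(s,r)\,dr$ is a clean way to exploit the $H^1$ structure along a single curve; it correctly yields the integrated identity $u(t,\zeta(t))-u(0,\zeta)=-\int_0^t P_x(s,\zeta(s))\,ds$, which is the sense in which the third bullet is meant. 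Dafermos's cited lemma reaches the same conclusion by testing the weak formulation with functions concentrating on the characteristic; your DiPerna--Lions-style argument buys a proof that does not require reproducing that lemma, at the cost of the chain-rule bookkeeping for $u^\epsilon$, which is only absolutely continuous in $t$ for fixed $\epsilon$ (your ``Lipschitz in $t$, uniformly'' is not uniform in $\epsilon$ for the term $(uu_x)*\rho_\epsilon$, but only fixed-$\epsilon$ absolute continuity is needed, since the uniform bound is required only for $P_x*\rho_\epsilon+R^\epsilon$ after cancellation).

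There is, however, one genuine error in your closing paragraph: $P_x$ is \emph{not} continuous in $(t,x)$ for a general weak solution, only Lipschitz in $x$ uniformly in $t$. Continuity in $t$ would require weak-$*$ continuity of $t\mapsto u_x^2(t,\cdot)\,dx$, and the failure of exactly that is the subject of this paper: by Theorem \ref{Th_Cadlag} the positive and negative parts of the energy density are only ladcag/cadlag, and in Example \ref{Ex_peakantipeak} one has $P(T,x)=0$ for the dissipative prolongation while $\lim_{t\to T^-}P(t,x)$ is a nonzero multiple of $e^{-|x|}$, so $P$ and $P_x$ genuinely jump at $t=T$. Consequently $s\mapsto P_x(s,\zeta(s))$ need not be continuous, and you cannot upgrade the integrated identity to ``$t\mapsto u(t,\zeta(t))$ is $C^1$''. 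The fix is simply to stop at the absolutely continuous formulation: $u(t,\zeta(t))$ is Lipschitz in $t$ (since $P_x$ is bounded) and $\frac{d}{dt}u(t,\zeta(t))=-P_x(t,\zeta(t))$ holds at every Lebesgue point of $s\mapsto P_x(s,\zeta(s))$, hence almost everywhere. Everything before that final sentence stands.
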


The second result combines Proposition 2.5, Corollary 6.1 and Lemmas 4.1 and 4.2 from \cite{CHGJ} and states that also $u_x$, when restricted to certain set of initial points, evolves along characteristics.

\begin{proposition}[Proposition 2.5, Corollary 6.1 from \cite{CHGJ}]
\label{Prop_MainCH}
Let $u$ be a weak solution of \eqref{eq_WeakCH1}-\eqref{eq_WeakCH3} and fix $t_0 \ge 0$. 
There exists a family of sets $\{S_{t_0,T}\}_{T>t_0}$ such that for every $\zeta \in S_{t_0,T}$ the characteristic $\zeta(\cdot)$, with $\zeta(t_0)=\zeta$, is unique on $[t_0,T]$, $\zeta$ is a Lebesgue point of $u_x(t_0,\cdot)$, and for $t_0 \le t < T$
\begin{equation}
\label{Eq_PropL}
\dot{v}(t) = u^2(t) - \frac 1 2 v^2(t) - P(t),
\end{equation}
where $u(t):=u(t,\zeta(t))$, $v(t) := u_x(t,\zeta(t))$ and $P(t):=P(t,\zeta(t))$.
Moreover, $S_{t_0,T_1} \subset S_{t_0,T_2}$ for $T_1 > T_2$ and $|\mathbb{R} \backslash \bigcup_{T>t_0} S_{t_0,T}| = 0$. Finally, for every $\zeta \in S_{t_0,T}$ there exists $N>0$ such that $|v(\cdot)| \le N$ on $[t_0,T]$. 
\end{proposition}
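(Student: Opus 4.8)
The plan is to obtain the Riccati equation \eqref{Eq_PropL} first by a formal computation and then to justify it for weak solutions via a Lagrangian change of variables. Differentiating \eqref{eq_WeakCH1} in $x$ gives $\partial_t u_x + u\,\partial_x u_x + u_x^2 + P_{xx} = 0$, and since $(I-\partial_{xx})P = u^2 + \frac12 u_x^2$ one has $P_{xx} = P - u^2 - \frac12 u_x^2$. Substituting yields the transport equation $\partial_t u_x + u\,\partial_x u_x = u^2 - \frac12 u_x^2 - P$, which along a characteristic $\zeta(\cdot)$ with $\dot\zeta = u(t,\zeta(t))$ is precisely \eqref{Eq_PropL} for $v = u_x(t,\zeta(t))$. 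The coefficients here are benign: since $u \in L^\infty([0,\infty),H^1) \hookrightarrow L^\infty$ and $P = \frac12 e^{-|x|}*(u^2 + \frac12 u_x^2)$, both $u^2$ and $P$ are bounded uniformly in $t$ (and $P \ge 0$); the only dangerous term is $-\frac12 v^2$, which is responsible for wave-breaking.

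The obstruction to making this rigorous is that $u_x$ is merely $L^2$ in $x$, so $v(t) = u_x(t,\zeta(t))$ is a priori undefined along a characteristic. To circumvent this I would work in Lagrangian coordinates: for an initial label $\xi$ I would follow the flow $t \mapsto y(t,\xi)$ supplied by the preceding Proposition, together with $U(t,\xi) := u(t,y(t,\xi))$ and the Jacobian $y_\xi(t,\xi)$. Two structural facts drive the argument: the map $\xi \mapsto y(t,\xi)$ is nondecreasing, so characteristics cannot cross, and wherever it is strictly increasing the characteristic through the corresponding point is unique and satisfies the identification $u_x(t,y(t,\xi)) = U_\xi(t,\xi)/y_\xi(t,\xi)$. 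In these variables the evolution of $(y_\xi, U_\xi)$, supplemented by a cumulative-energy variable, is governed by a system that remains regular precisely as long as $y_\xi > 0$; the event $y_\xi \downarrow 0$ is exactly wave-breaking, i.e. $v \to -\infty$. I would then \emph{define} $S_{t_0,T}$ to consist of those initial points $\zeta$ that are Lebesgue points of $u_x(t_0,\cdot)$, through which the characteristic is unique, and along which $y_\xi$ stays bounded away from $0$ on $[t_0,T]$.

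With this definition the remaining assertions follow in turn. Monotonicity $S_{t_0,T_1}\subset S_{t_0,T_2}$ for $T_1 > T_2$ is immediate, as the uniqueness and no-breaking requirements on the longer interval $[t_0,T_1]$ are the more restrictive. The bound $|v|\le N$ on $[t_0,T]$ splits into two parts: the upper bound is automatic from the Riccati drift, since $\dot v = u^2 - \frac12 v^2 - P \le u^2 \le C$ forces $v(t) \le v(t_0) + C(t-t_0)$, while the lower bound is exactly the no-breaking condition $y_\xi \ge c > 0$, which bounds $v = U_\xi/y_\xi$ from below once $U_\xi$ is controlled via the finiteness of the energy. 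The main obstacle, and the heart of the matter, is the full-measure statement $|\mathbb{R}\setminus\bigcup_{T>t_0} S_{t_0,T}| = 0$: one must show that for almost every $\zeta$ there exists \emph{some} $T>t_0$ with $\zeta \in S_{t_0,T}$. By the Lebesgue differentiation theorem almost every $\zeta$ is a Lebesgue point of $u_x(t_0,\cdot)$, and since $y_\xi(t_0,\cdot)\equiv 1$ a nondegenerate no-breaking interval exists for each label where the flow is regular; the delicate point, which relies on the fine estimates of \cite{CHGJ}, is to construct from the merely continuous field $u$ a Lagrangian flow regular enough in the label $\xi$ that the Jacobian $y_\xi$ is defined almost everywhere, the identification $u_x = U_\xi/y_\xi$ is valid, and the set of labels that break instantaneously is Lebesgue-null.
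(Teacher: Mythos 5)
You should first note that the paper does not prove this proposition at all: it is imported verbatim from \cite{CHGJ} (Proposition 2.5, Corollary 6.1 and Lemmas 4.1--4.2 there), so there is no in-paper proof to match. What the paper does reveal about the actual argument is the machinery it re-uses: Definition \ref{Def_Lt} with the sets $L_{t_0,T}^{unique,N}$, the difference quotients $\omega(s)=\frac{u(s,\eta(s))-u(s,\zeta(s))}{\eta(s)-\zeta(s)}$ taken along \emph{pairs} of nonunique characteristics, and the remark that $S_{t_0,T}=L_{t_0,T}^{unique}\setminus Z_{t_0,T}$. That approach is Eulerian and Dafermos-style: one never constructs a Lagrangian flow map, but instead controls $u_x$ at Lebesgue points through uniform two-sided bounds on difference quotients of $u$ itself, and obtains \eqref{Eq_PropL} in integrated form along the (unique, on the good set) characteristic. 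Your plan is the Bressan--Constantin Lagrangian route ($y(t,\xi)$, $U_\xi$, $y_\xi$, $u_x=U_\xi/y_\xi$), which is genuinely different.

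The gap is that the Lagrangian route is not available here. The proposition concerns an \emph{arbitrary} weak solution $u\in C\cap L^\infty_t H^1_x$, for which characteristics are nonunique and no flow map $\xi\mapsto y(t,\xi)$ with an almost-everywhere defined Jacobian $y_\xi$, let alone the identification $u_x(t,y(t,\xi))=U_\xi/y_\xi$, is known to exist; these objects are constructed in \cite{BC,BC2} only for the particular conservative/dissipative semigroups, essentially as the \emph{output} of solving a regular Lagrangian ODE system, not as something one can read off a given weak solution. Your formal derivation of the Riccati equation is correct, and the upper bound $v(t)\le v(t_0)+C(t-t_0)$ from $\dot v\le u^2-P\le C$ is fine once \eqref{Eq_PropL} is established; but every nontrivial assertion of the proposition --- uniqueness of the characteristic on the good set, validity of \eqref{Eq_PropL}, the two-sided bound $|v|\le N$, and above all $|\mathbb{R}\setminus\bigcup_{T>t_0}S_{t_0,T}|=0$ --- is exactly the content of the "delicate point" your last sentence defers. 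As written, the proposal assumes what is to be proved: that a regular Lagrangian structure with a.e.\ nondegenerate Jacobian exists for a general weak solution and that only a null set of labels breaks instantaneously. The cited work avoids this by never leaving Eulerian variables, replacing $y_\xi$ by quantitative control of the difference quotients $\omega$ on the sets $L_{t_0,T}^{unique,N}$.
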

\begin{remark}
Equation \eqref{Eq_PropL} is satisfied in the sense 
\begin{equation*}
v(t)-v(t_0) = \int_{t_0}^t \left( u^2(s)-\frac 1 2 v^2(s) - P(s)\right) ds
\end{equation*}
for almost every $t \in (t_0,T]$, such that $\zeta(t)$ is a Lebesgue point of $u_x(t,\cdot)$ (in other words, difference quotients converge to $u_x(t,\zeta(t))$). Since the set of Lebesgue points is for every fixed $t$ a full measure set, there exists a modification of $u_x(t,x)$ (for every $t$ on a set of $x$ of measure $0$) such that equation \eqref{Eq_PropL} is satisfied for \emph{every} $t \in [t_0,T]$.
\end{remark}

These two results allowed us in \cite{CHGJ} to establish the main theorem on the weak continuity properties of the positive and negative parts of the derviative $u_x$. Below, by $u_x^{\pm}:= \max(\pm u_x,0)$ we denote the positive/negative parts of function $u_x$.

\begin{theorem}[see Theorem 2.9 in \cite{CHGJ} and Remark \ref{Rem_missing}]
\label{Th_Cadlag}
Let $u$ be a weak solution of the Camassa-Holm equation. Then
\begin{itemize}
\item function $t \mapsto (u_x^+(t,\cdot))^2$ is weakly ladcag (left-continuous with right limits), 
\item function $t \mapsto (u_x^-(t,\cdot))^2$ is weakly cadlag (right-continuous with left limits),
\item the functions $t \mapsto \int_{\mathbb{R}} \phi(x)(u_x^+)^2(t,x)dx$ and $t \mapsto \int_{\mathbb{R}} \phi(x)(u_x^-)^2(t,x)dx$ have locally bounded variation provided $\phi$ is Lipschitz continous,
\item
the limits $$\phi \mapsto \lim_{t \to t_0^+} \int_{\mathbb{R}} \phi(x)(u_x^+)^2(t,x) dx$$ and $$\phi \mapsto  \lim_{t \to t_0^-} \int_{\mathbb{R}} \phi(x) (u_x^-)^2(t,x) dx$$ define, for every fixed $t_0\ge 0$, bounded linear functionals on $C_c(\mathbb{R})$. 
\end{itemize} 

\end{theorem}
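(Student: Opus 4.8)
The plan is to transport the spatial integrals to the initial fiber $\{t=t_0\}$ by means of the generically unique characteristic flow, where the Riccati equation \eqref{Eq_PropL} governs the dynamics. Fix $t_0\ge 0$, $T>t_0$ and write $X(t,\zeta):=\zeta(t)$ for the characteristic issuing from $\zeta\in S_{t_0,T}$, together with the Jacobian $y(t,\zeta):=\exp\left(\int_{t_0}^t v(s,\zeta)\,ds\right)$, which solves $\dot y=vy$, $y(t_0,\zeta)=1$. For a nonnegative Lipschitz test function $\phi$, the change of variables $x=X(t,\zeta)$ should give
$$\int_{\mathbb{R}} \phi(x)\,(u_x^{\pm})^2(t,x)\,dx = \int_{S_{t_0,T}} \phi(X(t,\zeta))\,(v^{\pm}(t,\zeta))^2\,y(t,\zeta)\,d\zeta,$$
where for each fixed $t\in[t_0,T]$ one uses that $\bigcup_{T>t_0}S_{t_0,T}$ has full measure while the characteristics that have already broken collapse to null sets, so that $X(t,\cdot)$ of the good points exhausts $\mathbb{R}$ up to a Lebesgue-null set. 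This reduces the four assertions to properties of the pulled-back density $(v^{\pm})^2 y$, which, unlike $u_x$ itself, is driven along each fixed $\zeta$ by the genuine ODE \eqref{Eq_PropL}.

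Next I would differentiate the integrand in $t$. Using \eqref{Eq_PropL} and $\dot y=vy$ one computes the clean identities
$$\frac{d}{dt}\big[(v^{+})^2 y\big] = 2\,v^{+}\,y\,(u^2-P), \qquad \frac{d}{dt}\big[(v^{-})^2 y\big] = -2\,v^{-}\,y\,(u^2-P),$$
together with $\frac{d}{dt}\phi(X)=\phi'(X)\,u$. Since $u$ and $P$ are bounded by the conserved-energy control $\|u(t,\cdot)\|_\infty,\|P(t,\cdot)\|_\infty\le CE$, the only potentially dangerous factor is $|v|\,y$; Cauchy--Schwarz and the transported energy estimate $\int_{S_{t_0,T}} v^2 y\,d\zeta=\int_{X(t,S_{t_0,T})}u_x^2(t,\cdot)\,dx\le 2\sup_t E(t)$ bound $\int |v|\,y$ over $\operatorname{supp}\phi$ uniformly in $t$. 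Differentiating under the integral sign then yields a uniform-in-$t$ bound on $\frac{d}{dt}\int\phi\,(u_x^{\pm})^2(t,\cdot)\,dx$ on any time interval free of breaking, hence local bounded variation (item 3) and the existence of one-sided limits.

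The ladcag/cadlag dichotomy (items 1--2) should come from the one-sided blow-up built into the Riccati equation \eqref{Eq_PropL}: running forward in time the quadratic term $-\tfrac12 v^2$ can only drive $v$ to $-\infty$ (never to $+\infty$), so mass can be lost or gained through breaking solely via the negative part $(u_x^-)^2$. Inspecting the local profile $v(t)\sim -2/(\tau-t)$ near a breaking time $\tau$ shows that $(v^-)^2 y$ stays bounded and possesses a left limit while the spatial Jacobian $y$ collapses, so the concentrating mass shows up precisely as a left limit of $\int\phi\,(u_x^-)^2$; this forces right-continuity with left limits for $(u_x^-)^2$ and, by the mirror argument (backward-in-time emergence of $v$ from $+\infty$), left-continuity with right limits for $(u_x^+)^2$, consistent with the concentration $\tfrac12(u_x^-)^2\to H_0^2\delta_0$ in Example \ref{Ex_peakantipeak}. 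Item 4 follows once the one-sided limits are known to exist: for $\phi\ge 0$ they are nonnegative and bounded by $\|\phi\|_\infty\cdot 2\sup_t E(t)$, so by linearity and density they extend to bounded, positive linear functionals on $C_c(\mathbb{R})$, represented by measures through Riesz.

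The \emph{main obstacle} I anticipate is making the transport identity and its one-sided limits rigorous across breaking times. The change of variables is transparent only on intervals along which the relevant characteristics remain in $S_{t_0,T}$, and one must verify that the energy carried by the measure-zero set of breaking characteristics is accounted for exactly by the one-sided limits, neither silently lost nor double counted. Concretely this requires controlling $\int(v^{\pm})^2 y\,d\zeta$ over shrinking neighborhoods of the breaking set, checking that $X(t,\cdot)$ of the good points remains essentially surjective up to null sets as $t$ approaches $\tau$ from the appropriate side, and confirming that the null-set modification of $u_x$ from the Remark following Proposition \ref{Prop_MainCH} does not affect the integrals. The remaining steps, differentiation under the integral sign and the Riesz representation, are routine once these measure-theoretic matchings are secured.
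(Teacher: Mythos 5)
There is a genuine gap, and it sits exactly where you placed your ``main obstacle.'' First, note that the paper does not reprove this theorem: it imports it from \cite[Theorem 2.9]{CHGJ} and only records, in Remark \ref{Rem_missing}, the corrected form of the key inequality of that proof, namely the one-sided estimate
$\int \phi (u_x^+)^2(t_2,\cdot)\,dz \ge \int \phi (u_x^+)^2(t_1,\cdot)\,d\gamma - C(\phi,u)\,(t_2-t_1)$ for $t_2>t_1$. This ``almost-monotonicity'' forward in time, combined with the real-variable lemma \cite[Lemma 9.1]{CHGJ}, is what simultaneously yields the $BV$ regularity, the existence of one-sided limits, and the ladcag/cadlag dichotomy. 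Your plan replaces this inequality by a change-of-variables \emph{equality}
$\int \phi\,(u_x^{\pm})^2(t,\cdot)\,dx = \int_{S_{t_0,T}} \phi(X)\,(v^{\pm})^2 y\,d\zeta$,
justified by the claim that $X(t,\cdot)$ applied to the good set exhausts $\mathbb{R}$ up to a null set for each fixed $t$. That claim is false in general: the Lebesgue-null complement of $S_{t_0,T}$ at time $t_0$ can spread under the (nonunique) characteristic flow into a set of positive measure carrying positive energy at times $t>t_0$ --- this is precisely the accretion phenomenon ($\mu^+\neq 0$) that Theorems \ref{thB} and \ref{Th_nomaxdissip} are about, and Example \ref{Ex_peakantipeak} (conservative prolongation, $\mu^+(T,dx)=2H_0^2\delta_0$) realizes it explicitly. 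Only the inequality $\ge$ survives, which is why the correct statement is the one-sided estimate of Remark \ref{Rem_missing} and why the conclusion is $BV$ with jumps rather than the Lipschitz-in-$t$ regularity that your ``uniform-in-$t$ bound on $\frac{d}{dt}\int\phi (u_x^{\pm})^2$'' would imply.

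Second, the crux --- that $(u_x^+)^2$ is weakly \emph{left}-continuous while $(u_x^-)^2$ is weakly \emph{right}-continuous --- is not delivered by inspecting the Riccati blow-up profile along individual characteristics. Your assertion that ``mass can be lost or gained through breaking solely via the negative part'' is contradicted by the paper's own example: mass is \emph{gained} by the positive part (the nonnegative measure $\mu^+$, carried by characteristics fanning out of a single point, along which the forward Riccati dynamics never applies at the branch time) and \emph{lost} by the negative part (the nonpositive measure $\mu^-$). Both parts jump; what differs is the side on which the jump occurs, and extracting that requires the quantitative one-sided inequality valid \emph{across} breaking times (obtained in \cite{CHGJ} by the change of variables restricted to the good set, giving a lower bound rather than an identity), not a local asymptotic $v(t)\sim -2/(\tau-t)$. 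Your computation of $\frac{d}{dt}[(v^{\pm})^2y]$ and the Cauchy--Schwarz control of $\int |v|\,y$ are correct and are indeed ingredients of that estimate, but as written the proposal asserts an identity that fails after accretion events and derives the continuity dichotomy from a heuristic that points in the wrong direction for $u_x^+$.
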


As mentioned in \cite{CHGJ}, applying Theorem \ref{Th_Cadlag} we can, using the Riesz representation theorem, define the objects of interest of the present paper -- measures of accretion and dissipation. Their rigorous definition is the point of departure of the present paper.

\begin{definition}[Measures of accretion and dissipation]
\label{Def_MAD}
Let $u$ be a weak solution of \eqref{eq_WeakCH1}-\eqref{eq_WeakCH3}. The \emph{measure of accretion} $\mu^+$ is defined for every $t_0 \in [0,\infty)$ as
\begin{equation*}
\int_{\mathbb{R}} \phi(x) \mu^+(t_0,dx) :=  \lim_{t \to t_0^+} \int_{\mathbb{R}} \phi(x) (u_x^+)^2(t,x) dx  -  \int_{\mathbb{R}} \phi(x) (u_x^+)^2(t_0,x) dx,
\end{equation*}
for any continuous compactly supported function $\phi$. Similarly, for every $t_0 \in (0,\infty)$ the \emph{measure of dissipation} $\mu^-$ is defined as
\begin{equation*}
\int_{\mathbb{R}} \phi(x) \mu^-(t_0,dx) :=  \int_{\mathbb{R}} \phi(x) (u_x^-)^2(t_0,x)dx -  \lim_{t \to t_0^-} \int_{\mathbb{R}} \phi(x) (u_x^-)^2(t,x) dx ,
\end{equation*}
for any continuous compactly supported function $\phi$.
\end{definition}
\begin{remark}
Measures obtained by the use of the Riesz theorem are in general signed. Due to, however, the last inequality from the proof of \cite[Theorem 2.9]{CHGJ} (see also Remark \ref{Rem_missing} below) 
\begin{equation*}
\lim_{t \to t_0^+} \int_{\mathbb{R}} \phi(x) (u_x^+)^2(t,x) dx  -  \int_{\mathbb{R}} \phi(x) (u_x^+)^2(t_0,x) dx
\end{equation*}
is always nonnegative. Thus, $\mu^+(t_0,dx)$ is in fact a nonnegative measure. Similarly, $\mu^-(t_0,dx)$ represents, for any $t_0$, a nonpositive measure. 
\end{remark}
\begin{remark}
\label{Rem_missing}
Note the missing factor $\sup(\phi)$ in the last inequality in \cite{CHGJ}. The correct form of this inequality is
%\begin{eqnarray*}
$\int_{\mathbb{R}} \phi u_x^+ (t_2,z)^2 dz  \ge  \int_{\mathbb{R}} \phi u_x^+(t_1,\gamma)^2 d\gamma -(t_2 - t_1){{\sup(\phi)}} (K(a_I(t_1) - a_0(t_1)) + \int_{[a_0(t_1),a_I(t_1)]} u_x(t_1,\gamma)^2 d\gamma ) - ({Lip(\phi) \sup(u) (t_2 - t_1)}  ) \int_{\mathbb{R}} u_x^2(t_2,z)dz$.
%\end{eqnarray*} 
Moreover, this form is valid for $\phi$ which are  Lipschitz continuous. For general $\phi \in C_c(\mathbb{R})$ one has to replace the term ${Lip(\phi) \sup(u) (t_2 - t_1)}$ by ${{MC^{\phi}(\sup(u) (t_2 - t_1))}}$, where $MC^{\phi}$ is the modulus of continuity of $\phi$. This means that one obtains, by \cite[Lemma 9.1]{CHGJ}, $BV$ regularity of $\phi \mapsto \int_{\mathbb{R}} \phi(x)(u_x^+)^2(t,x) dx$ only for Lipschitz continuous $\phi$, as formulated here in Theorem \ref{Th_Cadlag}. Nevertheless, the right limits of $\phi \mapsto  \int_{\mathbb{R}} \phi(x)(u_x^+)^2(t,x) dx$ exist, by \cite[Lemma 9.1]{CHGJ}, for arbitrary $\phi \in C_c(\mathbb{R})$.
\end{remark}

\begin{remark}
Measures of accretion and dissipation are defined here for every fixed $t_0$ separately and thus cannot account for accretion/dissipation of energy spread in time. Nevertheless, it seems to be possible to define such measures (at least in the case of dissipative solutions) as two-dimensional objects dependent on both time and space \cite{GJinprogress}.  
\end{remark}

\begin{example}
Due to equality \eqref{Eq_e}, the measures of accretion and dissipation for the peakon-antipeakon interaction from Example \ref{Ex_peakantipeak} are given by 
\begin{eqnarray*}
\mu^-(T,dx) &=& -2H_0^2 \delta_0(dx),\\
\mu^+(T,dx) &=& 2H_0^2 \delta_0(dx), \mbox{ (conservative prolongation)}\\
\mu^+(T,dx) &=& 0. \qquad\qquad\mbox{ (dissipative prolongation)} 
\end{eqnarray*} 
For $t_0 \neq T$, on the other hand, we have, due to smooth evolution in the neighbourhood of $t_0$, that $\mu^+(t_0,dx) =  \mu^-(t_0,dx) = 0$. Note also that for conservative solutions $\mu^+(T,dx)+\mu^-(T,dx) = 0$ and measure $\mu^+(T,dx)$ corresponds to the singular part of the measure used to define a global semigroup of solutions in \cite{BC}.
\end{example}

The main technical result of this paper, based on meticulous studies of characteristics, states that another characterization of measures $\mu^{\pm}$ is possible. Let us begin with a definition.
\begin{definition}[Thick pushforward] 
\label{Def_Thick}
Let $u$ be a weak solution of \eqref{eq_WeakCH1}-\eqref{eq_WeakCH3}. For any Borel set $B \subset \mathbb{R}$ the \emph{thick pushforward (pushbackward)} of $B$ from $t_0$ to $t$ with $t>t_0$ ($t<t_0$) is defined as
\begin{equation*}
B(t):= \{\alpha(t): \alpha(\cdot) \mbox{ is any characteristic of u satisfying } \alpha(t_0) \in B \}.
\end{equation*}
\end{definition}

\begin{theorem}[Characterization of accretion measure]
\label{thB}
Let $u$ be a weak solution of \eqref{eq_WeakCH1}-\eqref{eq_WeakCH3}. Let $t_0\ge 0$ and let $B$ be an arbitrary bounded Borel subset of $\mathbb{R}$.
Then 
\begin{equation*}
\mu^+(t_0,B) = \lim_{t \to t_0^+} \int_{B(t)} (u_x^+)^2(t,x) dx - \int_B (u_x^+)^2(t_0,x) dx,
\end{equation*} 
where $B(t)$ is the thick pushforward of $B$ from $t_0$ to $t$ (see Definition \ref{Def_Thick}).
\end{theorem}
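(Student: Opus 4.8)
The plan is to split the energy carried by the thick pushforward $B(t)$ into a part that is continuously transported along the (a.e.\ unique) characteristics and a residual part living on the images of the nonunique characteristics; the transported part should account for $\int_B (u_x^+)^2(t_0,x)\,dx$ in the limit, so that the difference in the statement isolates exactly the singular creation measured by $\mu^+(t_0,B)$. Throughout I work on the full-measure good set $G:=\bigcup_{T>t_0}S_{t_0,T}$ from Proposition \ref{Prop_MainCH}, on which the forward characteristic from $\zeta$ is unique; I write $X(t,\zeta):=\zeta(t)$ for the resulting flow and $v(t,\zeta):=u_x(t,X(t,\zeta))$. A clean uniform fact I will use repeatedly is that \emph{every} characteristic (unique or not) satisfies $|\alpha(t)-\alpha(t_0)|\le \|u\|_{L^\infty}(t-t_0)$, so $B(t)$ is trapped in an $O(t-t_0)$-neighbourhood of $B$.

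First I would perform the change of variables along good characteristics. Since characteristics preserve spatial order and, on $S_{t_0,T}$, obey $|v|\le N$, the flow $X(t,\cdot)$ is monotone and injective with a.e.\ Jacobian $J(t,\zeta)=\exp\left(\int_{t_0}^{t}v(s,\zeta)\,ds\right)$, bounded and bounded away from $0$ on each $S_{t_0,T}$. The area formula then yields
$$\int_{X(t,\,B\cap G)}(u_x^+)^2(t,x)\,dx=\int_{B\cap G}(v^+(t,\zeta))^2\,J(t,\zeta)\,d\zeta.$$
From the integral form of \eqref{Eq_PropL} one gets, for each $\zeta\in G$, that $v(t,\zeta)\to u_x(t_0,\zeta)$ and $J(t,\zeta)\to 1$ as $t\to t_0^+$, so the integrand tends pointwise to $(u_x^+)^2(t_0,\zeta)$. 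Exhausting $G$ by the $S_{t_0,T}$ (further decomposed by the value of $N$) and using the energy bound $\int_{\mathbb R}(u_x^+)^2(t,x)\,dx\le\|u_0\|_{H^1}^2$ to control the tails, I would pass to the limit and obtain, since $G$ is of full measure, that $\int_{X(t,B\cap G)}(u_x^+)^2(t,x)\,dx\to\int_B(u_x^+)^2(t_0,x)\,dx$.

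Because good characteristics emanate from $B$, we have $X(t,B\cap G)\subseteq B(t)$, hence
\begin{multline*}
\int_{B(t)}(u_x^+)^2(t,x)\,dx-\int_B(u_x^+)^2(t_0,x)\,dx\\
=\Big(\int_{X(t,B\cap G)}(u_x^+)^2(t)-\int_B(u_x^+)^2(t_0)\Big)+\int_{B(t)\setminus X(t,B\cap G)}(u_x^+)^2(t,x)\,dx,
\end{multline*}
where the bracket tends to $0$ by the previous step. It remains to identify the limit of the residual with $\mu^+(t_0,B)$. To connect it with the test-function definition in Definition \ref{Def_MAD}, I would sandwich $\mathbf 1_{B(t)}$ between continuous compactly supported functions $\phi_\varepsilon$ that equal $1$ on an $\varepsilon$-neighbourhood of $B$ and vanish outside a $2\varepsilon$-neighbourhood: by the uniform bound on characteristic speed, $B(t)\subseteq\{\phi_\varepsilon=1\}$ once $t-t_0$ is small, which converts $\int_{B(t)}(u_x^+)^2(t)$ into $\int\phi_\varepsilon (u_x^+)^2(t)$ up to controlled errors, and the same transport argument identifies the subtracted term with $\int\phi_\varepsilon(u_x^+)^2(t_0)$ in the limit. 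Comparing with Definition \ref{Def_MAD} gives the statement for $\mu^+(t_0,B^\varepsilon)$; letting $\varepsilon\to 0$ and invoking outer regularity of the finite measure $\mu^+(t_0,\cdot)$ yields the claim for intervals, and additivity of both sides together with approximation inside $\mu^+(t_0,\cdot)$ extends it to arbitrary bounded Borel $B$.

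The step I expect to be the main obstacle is this last identification, i.e.\ making the passage from the moving sets $B(t)$ to fixed test functions rigorous in the presence of nonunique characteristics. Two points require care. First, the measurability and regularity of $B(t)$, and the fact that the boundary $\partial B$ carries no residual energy in the limit, so that the inner and outer $\varepsilon$-approximations squeeze to a common value; this is exactly the kind of information about how nonunique characteristics fill out $B(t)$ that I expect to extract from the new results of Section \ref{Sec_Characteristics}. Second, the uniform integrability needed to interchange $\lim_{t\to t_0^+}$ with the spatial integral does not follow from the pointwise bounds $|v|\le N$ alone (as $N$ depends on $\zeta$) and must instead be drawn from the global energy bound together with the ladcag structure of $(u_x^+)^2$ established in Theorem \ref{Th_Cadlag}.
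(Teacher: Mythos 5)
Your overall architecture (change of variables along the a.e.\ unique characteristics to recover $\int_B(u_x^+)^2(t_0,\cdot)$, then a squeeze by test functions and regularity of $\mu^+$) matches the paper's strategy in outline, but there is a genuine gap at the step you yourself flag as the obstacle, and the tools you propose to close it with are not sufficient. The tail control you need --- that $\limsup_{t\to t_0^+}\int_{X(t,\,B\cap G\setminus S^N)}(u_x^+)^2(t,x)\,dx\to 0$ as $N\to\infty$, and likewise that the $\varepsilon$-collars $B(t)\triangle\{\phi_\varepsilon=1\}$ carry no energy in the iterated limit --- does not follow from the global energy bound plus the ladcag property of $(u_x^+)^2$: the image under the flow of a set of small Lebesgue measure can a priori absorb a fixed amount of $(u_x^+)^2(t,\cdot)$ as $t\downarrow t_0$ (this is exactly the concentration phenomenon the theorem is about). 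The paper's mechanism for excluding this is Lemmas \ref{Lem_CDeltaChar}, \ref{Lem18} and \ref{Lem19}: one fixes a later time $t_1$ at which the integral over the shrinking tube is already small, and argues by contradiction that if the integral were of size $\epsilon$ at some $t\in(t_0,t_1)$, then (most of) that energy sits where $u_x\ge 1$; the Riccati equation \eqref{Eq_PropL} forces such large positive slopes to persist up to $t_1$ with Jacobian $M'_{t_1-t}$ controlled from below, and the change of variables then transports the $\epsilon$ of energy forward to time $t_1$, contradicting the smallness there. This forward-persistence-of-positive-slope argument is the key idea of the proof and is absent from your proposal.

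A second, related gap is your appeal to ``additivity of both sides'' to pass from intervals to general Borel sets. The set function $B\mapsto\lim_{t\to t_0^+}\int_{B(t)}(u_x^+)^2(t,x)\,dx$ is \emph{not} additive on disjoint sets, because thick pushforwards of disjoint sets overlap: e.g.\ for $B_1=\{\alpha\}$ and $B_2=(\alpha,\beta)$ one has $B_1(t)=[\alpha^l(t),\alpha^r(t)]$ while $B_2(t)$ reaches down to points below $\alpha^r(t)$. The paper handles this by introducing the left-rightmost and right-leftmost characteristics $\zeta^{rl},\zeta^{lr}$ and proving (Lemma \ref{Lem_44}, again via Lemma \ref{Lem18}) that the overlap regions $[\zeta^{rl}(t),\zeta^r(t)]$ and $[\zeta^l(t),\zeta^{lr}(t)]$ carry no energy in the limit; for countable unions of intervals it needs a recursive merging construction of boundary characteristics together with Lemma \ref{Lem19} to control the tail $\bigcup_{n>N}$ uniformly in $t$, and it treats compact sets by complementation (Lemma \ref{Lem118}) rather than by inner approximation with intervals. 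Without these ingredients your reduction from intervals to Borel sets does not go through.
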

Let us mention that if $u$ is a weak solution of \eqref{eq_WeakCH1}-\eqref{eq_WeakCH3} then for every $t_0>0$ the function $u^{t_0b}(t,x):=-u(t_0-t,x)$ is also a weak solution of \eqref{eq_WeakCH1}-\eqref{eq_WeakCH3}. 
Applying Theorem \ref{thB} to $u^{t_0b}$, we  obtain a dual characterization of the dissipation measure.
\begin{theorem}[Characterization of dissipation measure]
\label{thBbis}
Let $u$ be a weak solution of \eqref{eq_WeakCH1}-\eqref{eq_WeakCH3}. Let $t_0> 0$ and let $B$ be an arbitrary bounded Borel subset of $\mathbb{R}$.
Then 
\begin{equation*}
\mu^-(t_0,B) = \int_B (u_x^-)^2(t_0,x) dx- \lim_{t \to t_0^-} \int_{B(t)} (u_x^-)^2(t,x) dx,
\end{equation*} 
where $B(t)$ is the thick pushbackward of $B$ from $t_0$ to $t$ (see Definition \ref{Def_Thick}).
\end{theorem}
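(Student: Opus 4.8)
The plan is to deduce Theorem \ref{thBbis} from Theorem \ref{thB} by the time-reversal duality flagged in the sentence preceding the statement, applied to the solution $u^{t_0b}(t,x):=-u(t_0-t,x)$. First I would confirm rigorously that $u^{t_0b}$ is a weak solution. The key computation is that, writing $P^{b}$ for the nonlocal term attached to $u^{t_0b}$, one has $(u^{t_0b})^2(t,\cdot)=u^2(t_0-t,\cdot)$ and $((u^{t_0b})_x)^2(t,\cdot)=(u_x)^2(t_0-t,\cdot)$, so that $P^{b}(t,x)=P(t_0-t,x)$ and hence $P^{b}_x(t,x)=P_x(t_0-t,x)$; substituting into \eqref{eq_WeakCH1} and using that $u$ solves \eqref{eq_WeakCH1} at the point $(t_0-t,x)$ shows that $u^{t_0b}$ satisfies \eqref{eq_WeakCH1} in the sense of distributions, while the regularity $C\cap L^\infty H^1$ and the initial datum $u^{t_0b}(0,\cdot)=-u(t_0,\cdot)$ are immediate. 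Since the quantities entering the accretion measure at initial time $0$ and the limit $t\to 0^+$ only probe $u^{t_0b}$ on a small interval $[0,\varepsilon]$, it suffices to have $u^{t_0b}$ a weak solution on $[0,t_0]$; if the global framework of \cite{CHGJ} is required one extends it arbitrarily past $t_0$ without affecting anything near $t=0$.

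Next I would set up the three correspondences that drive the duality. (i) \emph{Characteristics:} if $\beta(\cdot)$ is a characteristic of $u$ then $\alpha(s):=\beta(t_0-s)$ is a characteristic of $u^{t_0b}$ and conversely, which follows by differentiating and combining $\dot\beta(t)=u(t,\beta(t))$ and $\tfrac{d}{dt}u(t,\beta(t))=-P_x(t,\beta(t))$ with $P^{b}_x(s,x)=P_x(t_0-s,x)$. (ii) \emph{Derivatives:} since $(u^{t_0b})_x(t,x)=-u_x(t_0-t,x)$ and $u_x^-=\max(-u_x,0)$, the positive part of $(u^{t_0b})_x$ is the time-reversed negative part of $u_x$, i.e. $((u^{t_0b})_x^+)^2(s,x)=(u_x^-)^2(t_0-s,x)$. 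Feeding (ii) into Definition \ref{Def_MAD} at initial time $0$, after the change of variable $t=t_0-s$ (which turns the right limit $s\to 0^+$ into the left limit $t\to t_0^-$), yields $\mu^+_{u^{t_0b}}(0,\cdot)=-\mu^-_u(t_0,\cdot)$ as functionals on $C_c(\mathbb{R})$. (iii) \emph{Thick sets:} by (i), the thick pushforward of a bounded Borel set $B$ from $0$ to $s$ for $u^{t_0b}$ is $\{\beta(t_0-s):\beta\text{ char of }u,\ \beta(t_0)\in B\}$, which with $t=t_0-s$ is exactly the thick pushbackward $B(t)$ of $B$ from $t_0$ to $t$ for $u$ in the sense of Definition \ref{Def_Thick}.

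Finally I would apply Theorem \ref{thB} to $u^{t_0b}$ with initial time $0$ and the same set $B$, obtaining $\mu^+_{u^{t_0b}}(0,B)=\lim_{s\to 0^+}\int_{B^{b}(s)}((u^{t_0b})_x^+)^2(s,x)\,dx-\int_B ((u^{t_0b})_x^+)^2(0,x)\,dx$, and substitute (ii) and (iii) together with $t=t_0-s$. The right-hand side becomes $\lim_{t\to t_0^-}\int_{B(t)}(u_x^-)^2(t,x)\,dx-\int_B (u_x^-)^2(t_0,x)\,dx$, and the left-hand side equals $-\mu^-_u(t_0,B)$ by (ii); rearranging gives exactly the claimed identity. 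All the analytic content sits inside Theorem \ref{thB}, so the work here is the bookkeeping of the reflection. I expect the main (indeed only genuine) obstacle to be checking that every ingredient of the framework transforms consistently under $u\mapsto u^{t_0b}$: in particular that the uniqueness and Lebesgue-point structure of Proposition \ref{Prop_MainCH} and the ladcag/cadlag dichotomy of Theorem \ref{Th_Cadlag} are respected, so that the left-continuity used for $(u_x^-)^2$ matches the right-continuity used for $((u^{t_0b})_x^+)^2$ after the reflection $s\mapsto t_0-s$, and the minor technical point of promoting $u^{t_0b}$ to a solution on all of $[0,\infty)$.
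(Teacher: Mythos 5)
Your proposal is correct and is exactly the paper's argument: the paper proves Theorem \ref{thBbis} by the one-line remark that $u^{t_0b}(t,x):=-u(t_0-t,x)$ is again a weak solution and that Theorem \ref{thB} applied to it yields the dual formula. Your write-up simply makes explicit the sign and time-reversal bookkeeping (characteristics, $((u^{t_0b})_x^+)^2(s,\cdot)=(u_x^-)^2(t_0-s,\cdot)$, thick pushforward versus pushbackward, and $\mu^+_{u^{t_0b}}(0,\cdot)=-\mu^-_u(t_0,\cdot)$) that the paper leaves implicit.
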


Some of the conclusions from the theory presented above, which give a deeper insight into the structure of weak solutions of the Camassa-Holm equation, are the following.
\begin{theorem}
\label{Th_muac0}
Let $u$ be a weak solution of \eqref{eq_WeakCH1}-\eqref{eq_WeakCH3}. Then for every $t_0 \ge 0$ we have  
$$(\mu^+(t_0,dx))^{ac} = 0,$$ where $(\mu^+(t_0,dx))^{ac}$ is the absolutely continuous, with respect to the one-dimensional Lebesgue measure, part of measure $\mu^+(t_0,dx)$.
Similarly, for every $t_0>0$
$$(\mu^-(t_0,dx))^{ac} = 0.$$
\end{theorem}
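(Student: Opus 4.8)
The plan is to use the representation in Theorem \ref{thB} to show that the nonnegative measure $\mu^+(t_0,\cdot)$ assigns zero mass to the full-measure ``good set'' $G:=\bigcup_{T>t_0}S_{t_0,T}$ on which characteristics are unique; since $|\mathbb{R}\setminus G|=0$ by Proposition \ref{Prop_MainCH}, this makes $\mu^+(t_0,\cdot)$ purely singular, i.e. $(\mu^+(t_0,dx))^{ac}=0$. Writing $S^N_{t_0,T}:=\{\zeta\in S_{t_0,T}:|v(\cdot)|\le N\text{ on }[t_0,T]\}$, Proposition \ref{Prop_MainCH} gives $S_{t_0,T}=\bigcup_N S^N_{t_0,T}$, and the families $S^N_{t_0,T}$ increase as $N\uparrow\infty$ and as $T\downarrow t_0$. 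Hence, by continuity of measures from below, it suffices to prove $\mu^+(t_0,B')=0$ for every bounded Borel $B'\subset S^N_{t_0,T}$.

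Fix such a $B'$. First I would observe that, by uniqueness of characteristics on $S_{t_0,T}$, the thick pushforward of Definition \ref{Def_Thick} coincides with the image under the single-valued Lagrangian flow $\Phi_t(\zeta):=\zeta(t)$, that is $B'(t)=\Phi_t(B')$ for $t\in[t_0,T]$. The key analytic input is that on $S^N_{t_0,T}$ this flow is order-preserving and absolutely continuous with Jacobian $\Phi_t'(\zeta)=\exp\!\big(\int_{t_0}^t v(s)\,ds\big)$, which formally follows from $\dot\zeta=u(t,\zeta(t))$, $\partial_\zeta\dot\zeta=v\,\partial_\zeta\zeta$, together with the uniform bound $|v|\le N$; in particular $e^{-N(t-t_0)}\le\Phi_t'\le e^{N(t-t_0)}$ and $\Phi_t'\to1$ as $t\to t_0^+$. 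The change of variables then yields
\[
\int_{B'(t)}(u_x^+)^2(t,x)\,dx=\int_{B'}\big(v^+(t)\big)^2\,\Phi_t'(\zeta)\,d\zeta .
\]

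Next I would pass to the limit $t\to t_0^+$ under the integral. Equation \eqref{Eq_PropL} shows that along each good characteristic $v(t)=u_x(t,\zeta(t))$ is continuous in $t$, so $(v^+(t))^2\to(v^+(t_0))^2=(u_x^+)^2(t_0,\zeta)$, the last equality holding because every $\zeta\in S^N_{t_0,T}$ is a Lebesgue point of $u_x(t_0,\cdot)$. As the integrand is dominated by $N^2e^{N(T-t_0)}$ on the bounded set $B'$, dominated convergence gives $\lim_{t\to t_0^+}\int_{B'(t)}(u_x^+)^2(t,x)\,dx=\int_{B'}(u_x^+)^2(t_0,x)\,dx$. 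Inserting this into Theorem \ref{thB} yields $\mu^+(t_0,B')=0$; exhausting by bounded pieces and summing over $N$ and over $T=t_0+1/k$ via continuity from below gives $\mu^+(t_0,G)=0$, whence $(\mu^+(t_0,dx))^{ac}=0$. The statement for $\mu^-$ then follows by duality: applying the result just proved to the weak solution $u^{t_0b}(t,x)=-u(t_0-t,x)$, for which $((u^{t_0b})_x)^+(s,\cdot)=u_x^-(t_0-s,\cdot)$, and using the pushbackward characterization of Theorem \ref{thBbis}, identifies the accretion measure of $u^{t_0b}$ at time $0$ with $-\mu^-(t_0,\cdot)$, so that its vanishing absolutely continuous part transfers to $(\mu^-(t_0,dx))^{ac}=0$.

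I expect the main obstacle to be the second step: rigorously establishing that the Lagrangian flow is absolutely continuous on $S^N_{t_0,T}$ with the exponential Jacobian, so that the change of variables above is an exact equality. For a merely monotone (non–absolutely continuous) flow one only obtains the inequality $\int_{B'(t)}(u_x^+)^2(t,x)\,dx\ge\int_{B'}(v^+(t))^2\Phi_t'(\zeta)\,d\zeta$, which in the limit reproduces the lower bound on $\mu^+$ already known from its nonnegativity but does \emph{not} deliver the upper bound needed to force $\mu^+(t_0,B')=0$. Securing the equality therefore requires ruling out crossing of good characteristics (which follows from forward uniqueness on $S_{t_0,T}$ by a concatenation argument), controlling the possible merging of characteristics, and verifying the measurability of the sets $S^N_{t_0,T}$ and of the maps $\Phi_t$.
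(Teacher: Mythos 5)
Your proposal is correct and follows essentially the same route as the paper's proof: restrict to the full-Lebesgue-measure good set coming from Proposition \ref{Prop_MainCH}, combine Theorem \ref{thB} with the change of variables of Proposition \ref{Prop_ChangeOfV} and dominated convergence (using the uniform bound on $v$ there) to show the accretion measure vanishes on that set, and treat $\mu^-$ by time reversal; the paper merely packages this as a proof by contradiction on a set where the hypothetical density is bounded below. The ``main obstacle'' you flag --- exactness of the change of variables with Jacobian $e^{\int_{t_0}^t v(s)\,ds}$ on the sets of forward-unique characteristics --- is exactly what Proposition \ref{Prop_ChangeOfV} already supplies, so no additional work is required there.
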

\begin{theorem}
\label{th_mu0dis}
Let $u$ be a dissipative weak solution of \eqref{eq_WeakCH1}-\eqref{eq_WeakCH3}. Then $\mu^+(t,dx) = 0$ for every $t \ge 0$.

\end{theorem}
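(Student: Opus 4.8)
The plan is to split into the cases $t_0>0$ and $t_0=0$, exploiting the Oleinik-type condition in the former and the weak energy condition in the latter. In both cases the aim is to show that the measure $\mu^+(t_0,dx)$, which by Theorem \ref{Th_muac0} is \emph{a priori} purely singular, in fact carries no mass at all.

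\emph{Case $t_0>0$.} First I would note that the Oleinik-type condition of Definition \ref{Def_dissipative} directly controls the positive part of $u_x$: for every $t\ge t_0$ we have $u_x^+(t,x)=\max(u_x(t,x),0)\le const(1+1/t)\le const(1+1/t_0)$ for a.e. $x$, so that on $[t_0,2t_0]$ the densities $(u_x^+)^2(t,\cdot)$ are uniformly bounded by a constant $K=K(t_0)$. Writing $\nu^+$ for the weak-$*$ right limit $\lim_{t\to t_0^+}(u_x^+)^2(t,x)\,dx$, which exists as a finite measure by Theorem \ref{Th_Cadlag}, the bound $\int_U(u_x^+)^2(t,x)\,dx\le K|U|$ for open $U$ passes to the limit by lower semicontinuity, giving $\nu^+(U)\le K|U|$ and hence, by outer regularity, $\nu^+(A)\le K|A|$ for every Borel $A$. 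Thus $\nu^+$ is absolutely continuous with respect to the Lebesgue measure; since $(u_x^+)^2(t_0,\cdot)\,dx$ is also absolutely continuous, so is their difference $\mu^+(t_0,dx)$ (see Definition \ref{Def_MAD}). By Theorem \ref{Th_muac0} the absolutely continuous part of $\mu^+(t_0,dx)$ vanishes, and as $\mu^+(t_0,dx)$ equals its own absolutely continuous part, I conclude $\mu^+(t_0,dx)=0$.

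\emph{Case $t_0=0$.} Here the Oleinik bound degenerates as $t\to0^+$, so I would instead use the weak energy condition. Set $A(t)=\int_{\mathbb R}(u_x^+)^2(t,x)\,dx$, $B(t)=\int_{\mathbb R}(u_x^-)^2(t,x)\,dx$ and $C(t)=\int_{\mathbb R}u^2(t,x)\,dx$, so that $C(t)+A(t)+B(t)=2E(t)\le 2E(0)=C(0)+A(0)+B(0)$. Three one-sided limit facts then pin down the behaviour as $t\to0^+$: (i) since $(u_x^-)^2$ is weakly cadlag (Theorem \ref{Th_Cadlag}), $(u_x^-)^2(t,\cdot)\,dx\rightharpoonup(u_x^-)^2(0,\cdot)\,dx$, whence $\liminf_{t\to0^+}B(t)\ge B(0)$; (ii) the uniform $H^1$ bound together with continuity of $u$ gives $u(t,\cdot)\rightharpoonup u(0,\cdot)$ in $L^2$, so $\liminf_{t\to0^+}C(t)\ge C(0)$; and (iii) the weak-$*$ right limit $\nu^+$ of $(u_x^+)^2(t,\cdot)\,dx$ satisfies $\nu^+(\mathbb R)\le\liminf_{t\to0^+}A(t)$. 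Feeding (i)--(ii) into the energy inequality yields $\limsup_{t\to0^+}A(t)\le A(0)$, so by (iii) $\nu^+(\mathbb R)\le A(0)$. On the other hand, the nonnegativity of $\mu^+(0,dx)$ forces $\nu^+\ge(u_x^+)^2(0,\cdot)\,dx$ as measures, hence $\nu^+(\mathbb R)\ge A(0)$. A nonnegative measure dominating another of equal total mass must coincide with it, so $\nu^+=(u_x^+)^2(0,\cdot)\,dx$ and therefore $\mu^+(0,dx)=0$.

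The main obstacle is the endpoint $t_0=0$, precisely because the Oleinik-type condition gives no information there; the argument must extract cancellation from the global energy balance, which is delicate since $\int u^2$ and the dissipation mass $\int(u_x^-)^2$ are each only lower semicontinuous (mass could a priori be lost but not created in the limit), and it is exactly the signs of these one-sided inequalities, combined with $\mu^+\ge 0$, that leave no room for accretion. A secondary technical point, in the case $t_0>0$, is the passage from the pointwise bound on $u_x^+$ to absolute continuity of the weak-$*$ limit measure, which relies on lower semicontinuity on open sets rather than on any convergence of the densities themselves.
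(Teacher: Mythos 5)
Your proposal is correct and follows essentially the same strategy as the paper: for $t_0>0$ it combines the Oleinik-type bound with Theorem \ref{Th_muac0} (the paper runs the same two ingredients in the opposite order, localizing on the singular carrier of $\mu^+$ and deriving unboundedness of $u_x^+$ there, whereas you show directly that the right-limit measure has bounded density and hence no singular part -- a slightly cleaner packaging of the identical idea), and for $t_0=0$ it extracts the contradiction from the weak energy condition together with right-continuity of $(u_x^-)^2$ and lower semicontinuity of $\int u^2$ and of the limit measure, exactly as in the paper's cutoff argument. I see no gaps.
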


\begin{theorem}
\label{Th_countably}
Let $u$ be a weak solution of \eqref{eq_WeakCH1}-\eqref{eq_WeakCH3}. Then $\mu^+(t_0,dx) \neq 0$ for at most countably many $t_0 \ge 0$. Similarly, $\mu^-(t_0,dx) \neq 0$ for at most countably many $t_0 > 0$.
\end{theorem}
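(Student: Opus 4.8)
The plan is to identify the measure $\mu^+(t_0,dx)$ with the jump, at time $t_0$, of a scalar function of locally bounded variation, and then invoke the elementary fact that such a function has at most countably many discontinuities with summable jumps on any bounded interval. Concretely, for each $R\in\mathbb{N}$ I would fix a Lipschitz cutoff $\phi_R\in C_c(\mathbb{R})$ with $0\le\phi_R\le 1$, $\phi_R\equiv 1$ on $[-R,R]$ and $\mathrm{supp}\,\phi_R\subset[-R-1,R+1]$, and set
\[
g_R(t):=\int_{\mathbb{R}}\phi_R(x)\,(u_x^+)^2(t,x)\,dx .
\]
By Theorem \ref{Th_Cadlag}, since $\phi_R$ is Lipschitz, $g_R$ is ladcag and has locally bounded variation. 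Left-continuity gives $g_R(t_0)=\lim_{t\to t_0^-}g_R(t)$, so by Definition \ref{Def_MAD} the number $\int_{\mathbb{R}}\phi_R\,\mu^+(t_0,dx)=\lim_{t\to t_0^+}g_R(t)-g_R(t_0)$ is exactly the right-hand, hence total, jump of $g_R$ at $t_0$.

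Next I would exploit summability of jumps. For each $T\in\mathbb{N}$ a function of bounded variation on $[0,T]$ has at most countably many discontinuities, with $\sum|\text{jumps}|\le\mathrm{TV}_{[0,T]}(g_R)<\infty$. Using that $\mu^+(t_0,\cdot)\ge 0$ and $\phi_R\ge 0$, the jump of $g_R$ at $t_0$ is nonnegative and is strictly positive precisely when $\int_{\mathbb{R}}\phi_R\,\mu^+(t_0,dx)>0$. Hence the set $A_R^T:=\{t_0\in[0,T]:\int_{\mathbb{R}}\phi_R\,\mu^+(t_0,dx)>0\}$ is at most countable, and so is $A_R:=\bigcup_{T\in\mathbb{N}}A_R^T$ and, finally, $A:=\bigcup_{R\in\mathbb{N}}A_R$.

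It then remains to show $\{t_0\ge 0:\mu^+(t_0,dx)\neq 0\}\subseteq A$. Since $\mu^+(t_0,\cdot)$ is a nonnegative measure, it is nonzero if and only if some bounded set carries positive mass, i.e. $\mu^+(t_0,[-R,R])>0$ for some $R\in\mathbb{N}$; because $\phi_R\equiv 1$ on $[-R,R]$ and $\phi_R\ge 0$, this forces $\int_{\mathbb{R}}\phi_R\,\mu^+(t_0,dx)\ge\mu^+(t_0,[-R,R])>0$, so $t_0\in A_R\subseteq A$. As a countable union of countable sets, $A$ is countable, which proves the claim for $\mu^+$. The statement for $\mu^-$ follows by the symmetric argument with $(u_x^-)^2$, which by Theorem \ref{Th_Cadlag} is weakly cadlag of locally bounded variation: here $g_R^-(t):=\int_{\mathbb{R}}\phi_R\,(u_x^-)^2(t,x)\,dx$ is right-continuous, $\mu^-(t_0,dx)$ records its (nonpositive) left jump, and one repeats the summability-of-jumps argument; alternatively one may invoke the duality $u^{t_0 b}(t,x)=-u(t_0-t,x)$ already used to pass between Theorems \ref{thB} and \ref{thBbis}.

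The argument is largely routine, the only mildly delicate point being the passage from ``$\mu^+(t_0,dx)$ nonzero as a measure on $\mathbb{R}$'' to ``detected by a single compactly supported Lipschitz $\phi_R$.'' This is where positivity of $\mu^+$ and the normalization $\phi_R\equiv 1$ on $[-R,R]$ are used. It is also the reason for exhausting $\mathbb{R}$ by the cutoffs $\phi_R$ rather than testing directly against $\phi\equiv 1$: although the uniform energy bound $\int_{\mathbb{R}}u_x^2\,dx\le 2E$ would make $t\mapsto\int_{\mathbb{R}}(u_x^+)^2\,dx$ finite, the constant function $\phi\equiv 1$ is neither compactly supported nor covered by the hypotheses of Theorem \ref{Th_Cadlag}, so staying within the Lipschitz/$C_c(\mathbb{R})$ class is essential.
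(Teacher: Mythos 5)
Your argument is correct. It rests on the same mechanism as the paper's proof --- a scalar function of locally bounded variation in $t$ has at most countably many discontinuities, the jump at $t_0$ is identified with (a test of) $\mu^+(t_0,dx)$, and one then exhausts $\mathbb{R}$ by a countable family and takes a countable union --- but the function you apply it to is different. The paper works with $t \mapsto \int_{[\alpha^l(t),\beta^r(t)]} (u_x^+)^2(t,x)\,dx$, the integral over a moving interval bounded by the leftmost and rightmost characteristics, whose $BV_{loc}$ regularity is imported from \cite[Theorem 2.7]{CHGJ}, and then uses the finite propagation speed $\sup|u|$ to fit fixed compact sets inside these moving intervals. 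You instead test against fixed Lipschitz cutoffs $\phi_R$ and invoke only Theorem \ref{Th_Cadlag} together with Definition \ref{Def_MAD}, which keeps the proof entirely within the results stated in this paper, avoids characteristics altogether, and makes the exhaustion step (positivity of $\mu^+$ plus $\phi_R\equiv 1$ on $[-R,R]$) cleaner than the paper's propagation-speed argument. Two small remarks: only countability of the discontinuity set is needed, not summability of the jumps; and your treatment of $\mu^-$ via the cadlag property of $(u_x^-)^2$ (or via the time-reversal $u^{t_0 b}$) is the same dual step the paper uses elsewhere.
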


\begin{theorem}
\label{Th_nomaxdissip}
Let $u$ be a weak solution of \eqref{eq_WeakCH1}-\eqref{eq_WeakCH3}. Suppose $\mu^+(t_0,dx) \neq 0$ for some $t_0 \in [0,\infty)$. Then there exists a weak solution $\bar{u}$ of \eqref{eq_WeakCH1}-\eqref{eq_WeakCH3} such that 
\begin{itemize}
\item $\bar{u}(t,\cdot)=u(t,\cdot)$ for $t \in [0,t_0]$,
\item $\limsup_{t \to t_0^+} E(\bar{u}(t,\cdot)) < \liminf_{t \to t_0^+}E(u(t,\cdot))$,
\end{itemize}
where $E(u(t,\cdot)) = \int_{\mathbb{R}} \frac 1 2 (u^2(t,x)+u_x^2(t,x))dx$. 
Consequently, if $\mu^+(t_0,dx)\neq 0$ for some $t_0 \ge 0$ then $u$ does not dissipate energy at the highest possible rate.
\end{theorem}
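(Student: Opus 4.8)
The plan is to construct the modified solution $\bar u$ by performing, at time $t_0$, a ``dissipative replacement'' of the accreting part of the energy. Since by hypothesis $\mu^+(t_0,dx) \neq 0$, the right limit of $(u_x^+)^2(t,\cdot)$ as $t \to t_0^+$ carries strictly more mass (tested against suitable $\phi$) than $(u_x^+)^2(t_0,\cdot)$ itself. The key idea is that this surplus mass corresponds to energy that is being \emph{created} at $t_0$ in the form of incipient peaks (positive jumps in $u_x$ along characteristics), and that one is free to instead \emph{not} create this energy, prolonging the solution after $t_0$ in a more dissipative manner. First I would invoke the existence and semigroup constructions for weak (in particular dissipative) solutions, together with the characterization in Theorem \ref{thB}, to express the accreted energy concretely as $\mu^+(t_0,\mathbb{R}) = \lim_{t \to t_0^+}\int_{\mathbb{R}}(u_x^+)^2(t,x)dx - \int_{\mathbb{R}}(u_x^+)^2(t_0,x)dx > 0$, after checking that a bounded test set $B$ capturing the support suffices (the measure is finite because the energy is bounded).

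Next I would define $\bar u$ to coincide with $u$ on $[0,t_0]$ and, from the initial datum $u(t_0,\cdot)$ at time $t_0$, to be prolonged as the \emph{dissipative} weak solution (using the existence theory of \cite{XZ} or the semigroup of \cite{BC2,HR}, and the fact that $u^{t_0b}$-type time reflections produce genuine weak solutions). The point is that the dissipative prolongation, by virtue of its Oleinik-type one-sided bound and weak energy condition, does not create energy at $t_0$: its own accretion measure at the splicing time vanishes (this is exactly Theorem \ref{th_mu0dis} applied to the prolongation). Gluing a weak solution on $[0,t_0]$ to a weak solution on $[t_0,\infty)$ that share the common trace $u(t_0,\cdot) \in H^1$ yields again a weak solution, since \eqref{eq_WeakCH1} holds distributionally across $t_0$ once the traces match and $u \in C([0,\infty)\times\mathbb{R})$; I would verify this gluing carefully using the weak formulation and the continuity of $P$ in $t$.

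With $\bar u$ in hand, the energy comparison follows from decomposing $E = \frac12\int u^2 + \frac12\int (u_x^+)^2 + \frac12\int(u_x^-)^2$. The $\int u^2$ part is continuous in $t$ (as $u$ is continuous and bounded in $H^1$, $u(t,\cdot) \to u(t_0,\cdot)$ in $L^2_{loc}$ strongly, controlling the right limit), and the $(u_x^-)^2$ part can only decrease or stay bounded through $t_0$; the decisive gain comes from the $(u_x^+)^2$ part, where $\bar u$ refuses to accrete the positive mass $\mu^+(t_0,\mathbb{R})>0$ that $u$ does. Quantitatively, I expect
\begin{equation*}
\liminf_{t\to t_0^+}E(u(t,\cdot)) - \limsup_{t\to t_0^+}E(\bar u(t,\cdot)) \ge \tfrac12\,\mu^+(t_0,\mathbb{R}) > 0,
\end{equation*}
which gives the strict inequality, and the final ``not maximally dissipating'' conclusion is then immediate, since $\bar u$ agrees with $u$ up to $t_0$ yet has strictly smaller energy immediately afterwards.

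The main obstacle I anticipate is \emph{not} the construction of the dissipative prolongation itself but rather controlling the interplay between the positive and negative parts of $u_x$ at the splicing time, so as to guarantee that the suppressed accretion genuinely lowers the \emph{total} energy rather than being silently compensated by a simultaneous change in $\int u^2$ or in the dissipation measure. Because the limits defining $\mu^+$ are one-sided and $(u_x^+)^2$ is only weakly ladcag while $(u_x^-)^2$ is weakly cadlag (Theorem \ref{Th_Cadlag}), I must argue that replacing $u$ by its dissipative prolongation after $t_0$ does not secretly increase $\int(u_x^-)^2$ across $t_0$; here the weak energy condition for the dissipative prolongation, which forbids the dissipation measure from being ``undone,'' is the crucial ingredient. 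Establishing this clean separation of accretion from the other energy channels — i.e. that $\mu^+(t_0,\mathbb{R})$ is a bona fide surplus that the dissipative solution avoids — is the technically delicate heart of the argument.
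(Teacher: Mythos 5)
Your proposal is correct and follows the same overall strategy as the paper: prolong $u$ past $t_0$ by a solution whose energy does not exceed $E(u(t_0,\cdot))$, and show that $u$ itself must satisfy $\liminf_{t\to t_0^+}E(u(t,\cdot))\ge E(u(t_0,\cdot))+c\,\mu^+(t_0,\cdot)$ for some $c>0$. The two halves are, however, implemented with different tools. For the prolongation the paper uses the Bressan--Constantin \emph{conservative} semigroup (whose energy satisfies $E(\hat u(t,\cdot))\le E(\hat u(t_0,\cdot))$ for all $t$), whereas you use the dissipative semigroup; both give the needed bound $\limsup_{t\to t_0^+}E(\bar u(t,\cdot))\le E(u(t_0,\cdot))$ directly from the respective energy inequality, so your worry that one must ``separate the channels'' of $\bar u$ and rule out a hidden increase of $\int(u_x^-)^2$ is unnecessary --- the weak energy condition controls the full $H^1$ norm at once, and this step is immediate rather than delicate. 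For the lower bound on $\liminf_{t\to t_0^+}E(u(t,\cdot))$ the paper integrates over the thick pushforward $B(t)$ of a large interval and invokes Theorem \ref{thB} together with the weak continuity of $u^2$ and $(u_x^-)^2$ along $B(t)$, losing a factor (it gets the gap $\mu^+(t_0,B)/4$); your route via cut-off test functions, Definition \ref{Def_MAD} and the cadlag/ladcag statements of Theorem \ref{Th_Cadlag} is closer to the paper's own proof of the $t_0=0$ case of Theorem \ref{th_mu0dis}, is slightly more elementary (it does not need the pushforward representation at all), and yields the cleaner constant $\tfrac12\mu^+(t_0,\mathbb{R})$. The only points you should still write out are the standard gluing argument showing that the concatenation is a weak solution across $t=t_0$ (the paper also takes this for granted) and the passage from compactly supported test functions to the integral over all of $\mathbb{R}$, which is exactly the $\phi^K$ cut-off argument already used in the proof of Theorem \ref{th_mu0dis}.
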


\section{Preliminaries on characteristics}
\label{Sec_Characteristics}
In this section we prove some technical results regarding  characteristics of solutions of the Camassa-Holm equation, which are crucial in the proof of Theorem \ref{thB}.

Let us begin by recalling a result which asserts that supremum and infimum of a family of characteristics is a characteristic. The formulation below is a slight generalization, to initial points in a bounded set, of \cite[Lemma 5.1]{CHGJ}. Nevertheless, the proof of Lemma \ref{Lem_supinfChar} below follows exactly the proof of \cite[Lemma 5.1]{CHGJ} and thus we do not repeat it.
\begin{lemma}
\label{Lem_supinfChar}
Let $u:[t_0,T] \times \mathbb{R} \to \mathbb{R}$ be a locally bounded continuous function. Let $\{x_\alpha\}_{\alpha \in A}$ be a family of functions satisfying, for $t \in [t_0,T]$, 
\begin{eqnarray*}
\dot{x}_{\alpha}(t)&=&u(t,x_{\alpha}(t))
\end{eqnarray*}
and such that the set  $\{x_{\alpha}(t_0): \alpha \in A\}$ is bounded. Then function $y(t):=\sup_{\alpha \in A} x_{\alpha}(t)$ satisfies $\dot{y}(t)=u(t,y(t))$ and, similarly, function $z(t):= \inf_{\alpha \in A} x_{\alpha}(t)$ satisfies $\dot{z}(t)=u(t,z(t))$. If $u$ is a weak solution of \eqref{eq_WeakCH1}-\eqref{eq_WeakCH3} then $y(t)$ and $z(t)$ are, by \cite[Lemma 3.1]{DafHS}, characteristics of $u$.
\end{lemma}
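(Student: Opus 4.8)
The plan is to treat the lemma as the classical fact that the upper (resp.\ lower) envelope of a bounded funnel of solutions of an ODE with merely continuous right-hand side is again a solution, and to prove it by a direct computation of the one-sided derivative of $y$. First I would record the a priori confinement that makes everything work. Since $\{x_\alpha(t_0)\}$ is bounded, say contained in $[-M,M]$, and each $x_\alpha$ is defined on the compact interval $[t_0,T]$, the whole family stays in a fixed compact rectangle $R\subset[t_0,T]\times\mathbb{R}$; in the Camassa--Holm setting of interest this is immediate because $u(t,\cdot)\in H^1(\mathbb{R})$ embeds uniformly into $L^\infty(\mathbb{R})$, so $u$ is globally bounded by some $K$ and one simply has $|x_\alpha(t)-x_\alpha(t_0)|\le K(t-t_0)$. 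On $R$ the function $u$ is uniformly continuous with a modulus $\omega$ and every $x_\alpha$ is $K$-Lipschitz, so $y(t)=\sup_\alpha x_\alpha(t)$ and $z(t)=\inf_\alpha x_\alpha(t)$ are finite, $K$-Lipschitz, and in particular absolutely continuous.

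Next I would reduce the claim to a one-sided statement. It suffices to show that the right derivative of $y$ exists at every $t_1\in[t_0,T)$ and equals $u(t_1,y(t_1))$. Indeed, $t\mapsto u(t,y(t))$ is continuous (composition of continuous maps), so once $y$ has everywhere a right derivative equal to this continuous function, the elementary Dini-derivative monotonicity lemma applied to $F(t):=y(t)-\int_{t_0}^t u(s,y(s))\,ds$ (whose upper and lower right Dini derivatives both vanish) forces $F$ to be constant. This yields the integral equation $y(t)=y(t_0)+\int_{t_0}^t u(s,y(s))\,ds$, hence $y\in C^1$ with $\dot y(t)=u(t,y(t))$ on all of $[t_0,T]$, without any separate discussion of left derivatives.

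The heart of the matter is the two matching one-sided estimates, which I would establish by exploiting that near-maximizers cannot drift far. Fix $t_1$, set $c:=y(t_1)$, and for $h>0$ write $a_\alpha:=x_\alpha(t_1)\le c$. Using the integral form together with $|u(s,x_\alpha(s))-u(t_1,a_\alpha)|\le\omega((1+K)h)$ gives, for each $\alpha$, the bound $x_\alpha(t_1+h)\le a_\alpha+h\,u(t_1,a_\alpha)+h\,\omega((1+K)h)$. The key observation is that any $\alpha$ with $a_\alpha\le c-2Kh$ satisfies $x_\alpha(t_1+h)\le c-Kh\le y(t_1+h)$, so such indices never determine the supremum; for the remaining indices one has $c-a_\alpha<2Kh$, whence $u(t_1,a_\alpha)\le u(t_1,c)+\omega(2Kh)$ and $a_\alpha\le c$ yield $y(t_1+h)\le c+h\,u(t_1,c)+h(\omega(2Kh)+\omega((1+K)h))$, so that $D^{+}y(t_1)\le u(t_1,c)$. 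For the matching lower bound I would select, for each $h$, an index $\alpha_h$ with $a_{\alpha_h}>c-h^2$ and estimate $y(t_1+h)\ge x_{\alpha_h}(t_1+h)\ge c-h^2+h(u(t_1,c)-\omega(h^2)-\omega((1+K)h))$, giving $D_{+}y(t_1)\ge u(t_1,c)$. The two inequalities identify the right derivative.

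Finally, the statement for $z=\inf_\alpha x_\alpha$ follows by the reflection $w\mapsto -w$: the functions $-x_\alpha$ solve $\dot w=\tilde u(t,w)$ with $\tilde u(t,w):=-u(t,-w)$ continuous and locally bounded, so $\sup_\alpha(-x_\alpha)=-z$ is a solution of this reflected ODE, which is exactly $\dot z(t)=u(t,z(t))$. The last sentence of the lemma, upgrading $y$ and $z$ from solutions of the first characteristic equation to genuine characteristics of the weak solution (i.e.\ also satisfying the second equation $\frac{d}{dt}u(t,\cdot)=-P_x$), is then immediate from \cite[Lemma 3.1]{DafHS}, exactly as cited. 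I expect the only genuine difficulty to be the bookkeeping in the one-sided estimates: because the maximizing index varies with $t$, one must first use the uniform Lipschitz bound $K$ to localize the near-maximizers to an $O(h)$-neighbourhood of the level $c$, and only then can the modulus of continuity $\omega$ be brought to bear to close the estimate.
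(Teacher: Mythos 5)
Your proposal is correct, and it supplies what the paper deliberately omits: the paper gives no proof of this lemma, stating only that the argument ``follows exactly the proof of \cite[Lemma 5.1]{CHGJ}''. Your computation of the one-sided derivative is the standard ``integral funnel'' argument, and the two estimates check out: the uniform Lipschitz bound $K$ localizes the indices that can compete for the supremum at time $t_1+h$ to those with $x_\alpha(t_1)>y(t_1)-2Kh$, after which the modulus of continuity closes the upper bound, while the choice of an $h^2$-near-maximizer gives the matching lower bound; the reduction from a two-sided derivative to an everywhere-defined right derivative via the Dini-derivative lemma for $F(t)=y(t)-\int_{t_0}^{t}u(s,y(s))\,ds$ is also sound, as is the reflection $w\mapsto -w$ for the infimum.

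The one step you should not wave at is the a priori confinement to a compact rectangle. For a function $u$ that is merely locally bounded and continuous, the family $\{x_\alpha\}$ need not be uniformly bounded even though each member is defined on all of $[t_0,T]$ and the initial points are bounded: for $u(t,x)=x^{2}$ the solutions $x_c(t)=1/(c-t)$, $c>T$, all start in the bounded set $(0,1/(T-t_0))$ and exist on $[t_0,T]$, yet $\sup_c x_c(T)=+\infty$, so without a further hypothesis the supremum can fail to be finite at the right endpoint. Your parenthetical remark is therefore the essential one: in the only setting where the lemma is used, $u\in L^{\infty}([0,\infty),H^{1}(\mathbb{R}))$ is globally bounded by some $K$, the confinement $|x_\alpha(t)-x_\alpha(t_0)|\le K(t-t_0)$ is immediate, and the rest of your argument goes through verbatim. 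If you want the lemma in the generality in which it is stated, you should either add a hypothesis guaranteeing uniform boundedness of $\{x_\alpha(t):\alpha\in A,\ t\in[t_0,T]\}$ or prove it from the structure of the problem; as written, that sentence of your proof is the only unsupported assertion.
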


Using Lemma \ref{Lem_supinfChar} we can define rightmost and leftmost characteristics. Namely, given a weak solution $u$ of \eqref{eq_WeakCH1}-\eqref{eq_WeakCH3}, for every $t_0 \ge 0$ and $\zeta \in \mathbb{R}$ there exist (see \cite[Corollary 5.2]{CHGJ}) the \emph{rightmost} characteristic $\zeta^r(\cdot)$ and the \emph{leftmost} characteristic $\zeta^l(\cdot)$, which are the unique characteristics defined on $[t_0,\infty)$ satisfying 
\begin{itemize}
\item $\zeta^r(t_0) = \zeta = \zeta^l(t_0),$
\item $\zeta^l(t) \le \zeta(t) \le \zeta^r(t)$ for $t \in [t_0,\infty)$ and every characteristic $\zeta(\cdot)$ with $\zeta(t_0)=\zeta$.
\end{itemize}
Similarly, for every $\Gamma \in \mathbb{R}$ there exist 
\emph{rightmost backward} characteristic $\Gamma^{rb}(\cdot)$ and the \emph{leftmost backward} characteristic $\Gamma^{lb}(\cdot)$, which are the unique characteristics defined on $[0,t_0]$ satisfying 
\begin{itemize}
\item $\Gamma^{rb}(t_0) = \Gamma = \Gamma^{lb}(t_0),$
\item $\Gamma^{lb}(t) \le \Gamma(t) \le \Gamma^{rb}(t)$ for $t \in [0,t_0]$ and every characteristic $\Gamma(\cdot)$ with $\Gamma(t_0)=\Gamma$.
\end{itemize}
We finish the introductory part of this section by presenting a change of variables formula, which is indispensable in the theory developed in this paper. Before, however, let us define a useful family of sets (compare \cite[Definition 5.8]{CHGJ}).
\begin{definition}
\label{Def_Lt}
\begin{eqnarray*}
L_{t_0,T}^{unique,N}&:=& \{\zeta \in \mathbb{R}: \zeta(\cdot) \mbox{ is unique forwards on $[t_0,T]$, } \\ && \zeta(s) \mbox{ is a Lebesgue point of } u_x(s,\cdot) \mbox{ for almost every } s \in [t_0,T] \mbox{ and }\\ &&\forall_{\eta \in (\zeta- \frac 1 N, \zeta) \cup (\zeta,\zeta+ \frac 1 N),  s \in [t_0,T]} -N \le \omega(s) \le N \} ,\\
L_{t_0,T}^{unique} &:=& \bigcup_{N=1}^{\infty}L_{t_0,T}^{unique,N},\\
L_{T}^{unique,N}&:=&L_{0,T}^{unique,N},\\
L_{T}^{unique}&:=&L_{0,T}^{unique},
\end{eqnarray*}
where $\omega(s):=\frac{u(s,\eta(s))-u(s,\zeta(s))}{\eta(s)-\zeta(s)}$, $\eta(\cdot)$ is any characteristic satisfying $\eta(t_0)=\eta$ and \emph{unique forwards} means that if $\zeta_1(\cdot)$ and $\zeta_2(\cdot)$ are two characteristics satisfying $\zeta_1(t_0)=\zeta_2(t_0)=\zeta$ then $\zeta_1(s)=\zeta_2(s)$ for all $s \ge t_0$.
\end{definition}
\begin{remark}
\begin{enumerate}[i)]
\item By Fubini theorem the condition that $\zeta(s)$ is a Lebesgue point of $u_x(s,\cdot)$ for almost every $s \in [t_0,T]$ defines a full-measure subset, which however does not have to be Borel. Nevertheless, after removal of a set of measure $0$ the sets $L_{t_0,T}^{unique,N}$ become Borel measurable. 
\item The sets $S_{t_0,T}$ in Proposition \ref{Prop_MainCH} can be chosen as $S_{t_0,T} = L_{t_0,T}^{unique} \backslash Z_{t_0,T}$ for some sets $Z_{t_0,T}$ of Lebesgue measure $0$.
\end{enumerate}
\end{remark}

The following proposition is a generalization to arbitrary initial times $t_0$ of the theory presented in \cite[Section 7]{CHGJ}, with the change of variables formula being a consequence of \cite[(6)]{FT}.
\begin{proposition}[Change of variables formula]
\label{Prop_ChangeOfV}
Fix $t_0 \ge 0$. Let $g$ be a bounded nonnegative Borel measurable function and let $A \subset L_{t_0,t}^{unique}$ be a Borel set. Then 
\begin{equation*}
\int_{M_{t-t_0}(A)} g(z)dz = \int_A g(M_{t-t_0}(\zeta))M_{t-t_0}'(\zeta)d\zeta,
\end{equation*}
where $t \ge t_0$, $M_{t-t_0}(\zeta):= \zeta^l(t)$ and $\zeta^l(\cdot)$ is the leftmost characteristic satisfying $\zeta^l(t_0)=\zeta$. Moreover, for $\zeta \in L_{t_0,t}^{unique}$ 
\begin{equation*}
M_{t-t_0}'(\zeta) = e^{\int_{t_0}^t v(s)ds},
\end{equation*}
where $v(s)=u_x(s,\zeta(s))$.
\end{proposition}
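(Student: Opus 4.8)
The plan is to split the proof into two essentially independent pieces: first, the pointwise differentiability statement $M_{t-t_0}'(\zeta)=e^{\int_{t_0}^t v(s)\,ds}$ for $\zeta\in L_{t_0,t}^{unique}$, and second, the area formula itself, which I would deduce from this differentiability together with monotonicity of $M_{t-t_0}$ by invoking the change of variables result \cite[(6)]{FT}. Before either step I would record that $M:=M_{t-t_0}$ is nondecreasing on all of $\mathbb{R}$, i.e.\ $\zeta_1<\zeta_2\Rightarrow\zeta_1^l(t)\le\zeta_2^l(t)$: if the two leftmost characteristics crossed, then concatenating $\zeta_1^l$ up to the first crossing time with $\zeta_2^l$ afterwards would, by Lemma \ref{Lem_supinfChar}, yield a characteristic emanating from $\zeta_1$ that dips strictly below $\zeta_1^l$, contradicting minimality of the leftmost characteristic. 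Monotonicity guarantees that $M$ is differentiable almost everywhere and makes the formula of \cite{FT} applicable once the a.e.\ derivative has been identified.

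For the derivative formula, fix $\zeta\in L_{t_0,t}^{unique,N}$, so the characteristic $\zeta(\cdot)$ from $\zeta$ is unique and hence equals $\zeta^l(\cdot)$. For $\eta$ with $0<|\eta-\zeta|<1/N$ set $\eta(\cdot):=\eta^l(\cdot)$ and $D(s):=\eta(s)-\zeta(s)$. Since both are characteristics, $D$ is $C^1$ and
\begin{equation*}
\dot D(s)=u(s,\eta(s))-u(s,\zeta(s))=\omega(s)\,D(s),\qquad \omega(s)=\frac{u(s,\eta(s))-u(s,\zeta(s))}{\eta(s)-\zeta(s)},
\end{equation*}
so $D$ never vanishes and the linear ODE integrates exactly to
\begin{equation*}
\frac{M(\eta)-M(\zeta)}{\eta-\zeta}=\frac{\eta(t)-\zeta(t)}{\eta-\zeta}=\exp\Big(\int_{t_0}^t\omega(s)\,ds\Big).
\end{equation*}
It then remains to show $\int_{t_0}^t\omega(s)\,ds\to\int_{t_0}^t v(s)\,ds$ as $\eta\to\zeta$, treating the right and left limits separately. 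The definition of $L_{t_0,t}^{unique,N}$ supplies the uniform bound $|\omega(s)|\le N$, whence $|D(s)|\le|\eta-\zeta|\,e^{N(t-t_0)}\to 0$ uniformly in $s$, so the averaging interval with endpoint $\zeta(s)$ shrinks to $\zeta(s)$. Writing $\omega(s)=\frac{1}{\eta(s)-\zeta(s)}\int_{\zeta(s)}^{\eta(s)}u_x(s,y)\,dy$ and using that for almost every $s$ the point $\zeta(s)$ is a Lebesgue point of $u_x(s,\cdot)$, the one-sided averages converge to $u_x(s,\zeta(s))=v(s)$ for a.e.\ $s$. The bound $|\omega(s)|\le N$ is the domination needed for the dominated convergence theorem, which yields the claimed limit; since both one-sided limits equal $e^{\int_{t_0}^t v}$, the function $M$ is differentiable at $\zeta$ with the asserted derivative.

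With monotonicity and the a.e.\ derivative $M'=e^{\int_{t_0}^t v}>0$ in hand, the area formula $\int_{M(A)}g(z)\,dz=\int_A g(M(\zeta))M'(\zeta)\,d\zeta$ for Borel $A\subset L_{t_0,t}^{unique}$ and bounded nonnegative Borel $g$ follows from \cite[(6)]{FT}: one first takes $g=\mathbf{1}_E$ to obtain the pushforward identity $|M(A)\cap E|=\int_A \mathbf{1}_E(M(\zeta))M'(\zeta)\,d\zeta$ and then extends to general $g$ by the standard monotone-convergence argument. I expect the main obstacle to be the limit step in the derivative formula: one must \emph{simultaneously} control that the intervals $[\zeta(s),\eta(s)]$ shrink (through the a priori bound $|\omega|\le N$ coming from membership in $L_{t_0,t}^{unique,N}$) and that $\zeta(s)$ is a Lebesgue point for a.e.\ $s$ (through the Fubini-type full-measure property built into Definition \ref{Def_Lt}), and then reconcile these two almost-everywhere conditions on a common full-measure set of $s$ before applying dominated convergence. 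Identifying the precise form of \cite[(6)]{FT} applicable here — monotonicity together with the a.e.\ differentiability established above — is a secondary technical point.
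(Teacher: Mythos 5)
Your proposal follows essentially the same route as the paper, which gives no self-contained argument here but defers exactly to the two ingredients you identify: the derivative formula $M_{t-t_0}'(\zeta)=e^{\int_{t_0}^t v(s)\,ds}$ obtained by integrating the linear equation $\dot D=\omega D$ for the separation of nearby characteristics and passing to the limit via the Lebesgue-point property and the bound $|\omega|\le N$ (this is \cite[Section 7]{CHGJ}), and the substitution rule \cite[(6)]{FT} for the area formula. One caveat: in your last step you assert that the area formula follows from ``monotonicity together with the a.e.\ differentiability established above,'' but these two properties alone do not imply $|M(A)|=\int_A M'\,d\zeta$ (a Cantor-type singular part could survive); what actually closes the argument is the two-sided estimate $e^{-N(t-t_0)}|\eta-\zeta|\le|M(\eta)-M(\zeta)|\le e^{N(t-t_0)}|\eta-\zeta|$, valid for $\zeta\in L_{t_0,t}^{unique,N}$ and $|\eta-\zeta|<1/N$, which you already derived en route and which gives the Luzin-type absolute continuity of $A\mapsto|M(A)|$ on $L_{t_0,t}^{unique,N}$ needed to identify the Lebesgue--Stieltjes measure $dM$ with $M'\,d\zeta$ there, before letting $N\to\infty$. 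With that one clause made explicit, the argument is complete and matches the cited theory.
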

\begin{remark}
By considering sets $A \cap L_{t_0,t}^{unique,N}$ instead of $A$ and passing to the limit $N\to \infty$, it suffices to assume that $g$ in Proposition \ref{Prop_ChangeOfV} is bounded on $L_{t_0,t}^{unique,N}$ for every $N=1,2,\dots$ with a bound possibly dependent on $N$.
\end{remark}

Now, let us present our new results regarding characteristics. The first one shows that rightmost characteristics converge from the right to a rightmost characteristic.
\begin{lemma}
\label{Lem_CDeltaChar}
Let $u$ be a weak solution of \eqref{eq_WeakCH1}-\eqref{eq_WeakCH3} and fix $t_0 \ge 0$. For every $\beta \in \mathbb{R}$ and $t_1 \in [t_0,\infty)$ we have
\begin{equation*}
\lim_{\delta \to 0^+} (\beta+\delta)^r(t_1) = \beta^r(t_1),
\end{equation*}  
where $\beta^r(\cdot)$ and $(\beta+\delta)^r(\cdot)$ are the rightmost characteristics with $\beta^r(t_0) = \beta$ and $(\beta+\delta)^r(t_0) = \beta + \delta$, respectively. 
\end{lemma}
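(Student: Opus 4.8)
The plan is to deduce the claim from two ingredients: the monotone dependence of the rightmost characteristic on its initial point, and the fact, supplied by Lemma \ref{Lem_supinfChar}, that the infimum of a bounded family of characteristics is again a characteristic. Throughout I use that $u$ is continuous and, being valued in $H^1(\mathbb{R}) \hookrightarrow L^\infty(\mathbb{R})$, locally bounded, so that all characteristics through a bounded set of initial points are uniformly Lipschitz on $[t_0,t_1]$.

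First I would establish monotonicity: if $\beta_1 \le \beta_2$ then $\beta_1^r(t) \le \beta_2^r(t)$ for all $t \ge t_0$. Suppose this fails at some $t_1$, so $\beta_1^r(t_1) > \beta_2^r(t_1)$, while $\beta_1^r(t_0)=\beta_1 \le \beta_2 = \beta_2^r(t_0)$. By continuity of $t \mapsto \beta_1^r(t)-\beta_2^r(t)$ the two curves must meet, and I would let $\tau := \sup\{t \in [t_0,t_1] : \beta_1^r(t)=\beta_2^r(t)\}$ be the last crossing before $t_1$; then $\beta_1^r(t) > \beta_2^r(t)$ on $(\tau,t_1]$. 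Concatenating $\beta_2^r$ on $[t_0,\tau]$ with $\beta_1^r$ on $[\tau,\infty)$ produces a continuous curve $w$ that solves $\dot w(t)=u(t,w(t))$ on each piece and matches at $\tau$, hence is a characteristic (Lemma \ref{Lem_supinfChar} / \cite[Lemma 3.1]{DafHS}) with $w(t_0)=\beta_2$; but $w(t_1)=\beta_1^r(t_1) > \beta_2^r(t_1)$ contradicts the maximality defining $\beta_2^r$. This yields the monotonicity.

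Using monotonicity, the map $\delta \mapsto (\beta+\delta)^r(t_1)$ is nondecreasing on $(0,\infty)$ and bounded below by $\beta^r(t_1)$, so $L := \lim_{\delta\to 0^+}(\beta+\delta)^r(t_1) = \inf_{\delta>0}(\beta+\delta)^r(t_1)$ exists and satisfies $L \ge \beta^r(t_1)$. For the reverse inequality I would set $z(t):=\inf_{\delta\in(0,1]}(\beta+\delta)^r(t)$, which by monotonicity coincides for each fixed $t$ with $\lim_{\delta\to 0^+}(\beta+\delta)^r(t)$. The initial points $\{\beta+\delta:\delta\in(0,1]\}$ form a bounded set and $u$ is a locally bounded continuous weak solution, so Lemma \ref{Lem_supinfChar} guarantees that $z(\cdot)$ is itself a characteristic of $u$, and evidently $z(t_0)=\inf_{\delta}(\beta+\delta)=\beta$. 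Since $\beta^r$ is by definition the rightmost characteristic emanating from $\beta$, we have $z(t)\le \beta^r(t)$ for all $t\ge t_0$, in particular $L=z(t_1)\le \beta^r(t_1)$. Combining the two inequalities gives $L=\beta^r(t_1)$, which is the assertion.

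The main obstacle is the monotonicity step. Because uniqueness of characteristics may fail, I cannot invoke a standard comparison principle for the ODE $\dot x = u(t,x)$; the concatenation-plus-maximality argument is precisely what replaces it, turning a hypothetical crossing of two rightmost characteristics into a characteristic that would violate the defining maximality of $\beta_2^r$. Once monotonicity is in hand, the remainder is a soft consequence of Lemma \ref{Lem_supinfChar}: it is what lets me identify the monotone limit of the shifted rightmost characteristics as a genuine characteristic starting from $\beta$, so that the rightmost property of $\beta^r$ forces the limit to lie below $\beta^r(t_1)$.
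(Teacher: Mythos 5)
Your proof is correct, but it takes a different route from the paper's. The paper fixes $\Gamma:=\lim_{\delta\to 0^+}(\beta+\delta)^r(t_1)$, assumes $\Gamma>\beta^r(t_1)$, and derives a contradiction by tracking the \emph{leftmost backward} characteristic $\Gamma^{lb}$ from $(t_1,\Gamma)$: either $\Gamma^{lb}(t_0)\le\beta$, in which case concatenating $\beta^r$ with $\Gamma^{lb}$ violates the rightmost property of $\beta^r$, or $\Gamma^{lb}(t_0)>\beta$, in which case the rightmost characteristic from the midpoint of $[\beta,\Gamma^{lb}(t_0)]$ must cross $\Gamma^{lb}$ and a concatenation violates the leftmost-backward property of $\Gamma^{lb}$. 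You avoid backward characteristics entirely: after establishing monotonicity of $\delta\mapsto(\beta+\delta)^r(t)$ by the same concatenation-versus-maximality device (which the paper also uses, implicitly, to get $\Gamma\ge\beta^r(t_1)$), you apply Lemma \ref{Lem_supinfChar} directly to the family $\{(\beta+\delta)^r\}_{\delta\in(0,1]}$, observe that its pointwise infimum $z(\cdot)$ is a characteristic with $z(t_0)=\beta$, and conclude $z(t_1)\le\beta^r(t_1)$ from the defining maximality of $\beta^r$. This is shorter and arguably more transparent, since it reuses the sup/inf lemma that the paper already needs to define extremal characteristics, at the cost of spelling out the monotonicity step that the paper leaves tacit; the paper's two-case geometric argument, on the other hand, is self-contained at the level of individual characteristics and exercises the backward-characteristic machinery that recurs later in the paper (e.g.\ in Lemma \ref{Lem14} and Lemma \ref{Lem118}). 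Both arguments rest on the same two pillars: concatenation of solutions of $\dot x=u(t,x)$ yields a characteristic, and extremal characteristics dominate all others with the same initial point.
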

\begin{proof}
Let $\Gamma:=\lim_{\delta \to 0^+} (\beta+\delta)^r(t_1)$. Clearly, $\Gamma\ge\beta^r(t_1)$ since $(\beta+\delta)^r(t_1) \ge \beta^r(t_1)$ for every $\delta>0$.
Suppose $\Gamma > \beta^r(t_1)$. Let $\Gamma^{lb}(t)$ be the leftmost backward characteristic with $\Gamma^{lb}(t_1) = \Gamma$. %Then on the one hand $\Gamma^{lb}(t_0)>\beta$ by the fact that $\beta^r$ is the rightmost characteristic. 
Suppose $\Gamma^{lb}(t_0)\le \beta$. Then there would exist a characteristic emanating from $\beta$ which is more to the right than $\beta^r$, obtained as a concatenation of $\beta^r(\cdot)$ (until the crossing time) and $\Gamma^{lb}(\cdot)$ (from the crossing time on) (see Figure \ref{Fig4}left).
\begin{figure}[h!]
\center
\includegraphics[width=10cm]{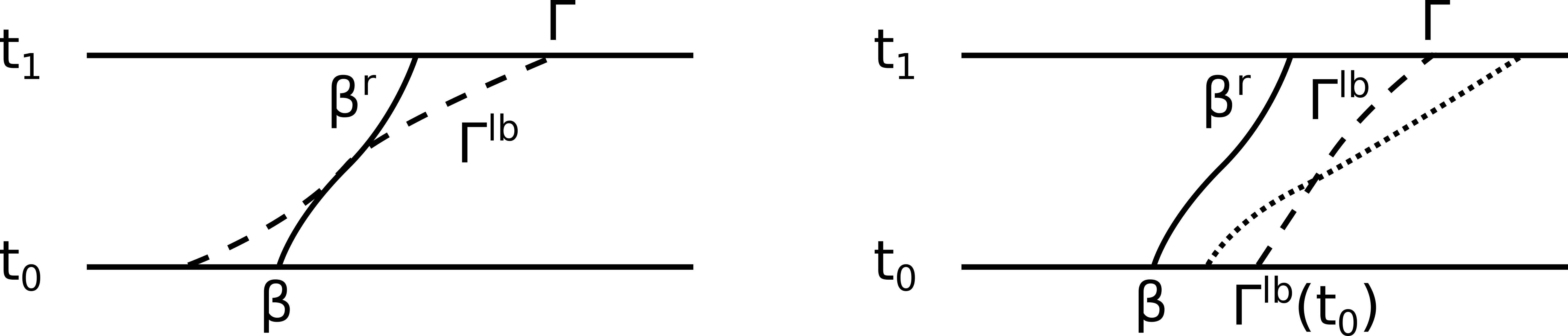}
\caption{Illustration of the two cases leading to a contradiction in the proof of Lemma \ref{Lem_CDeltaChar} for $\Gamma>\beta^r(t_1)$. Left: case $\Gamma^{lb}(t_0) \le \beta$. Then there exists a characteristic emanating from $\beta$ which is more to the right than $\beta^r$. Right: case $\Gamma^{lb}(t_0)>\beta$. Then the rightmost characteristic emanating from a point between $\beta$ and $\Gamma^{lb}(t_0)$ (dotted line) has to finish at a point to the right of $\Gamma$ at time $t_1$ (due to definition of $\Gamma$). Thus it has to cross $\Gamma^{lb}$, which allows us to find a backward characteristic finishing at $\Gamma$, which lies more to the left than $\Gamma^{lb}$.}
\label{Fig4}
\end{figure}

Hence, $\Gamma^{lb}(t_0)>\beta$. Then, however, the characteristic $[(\Gamma^{lb}(t_0)+\beta)/2]^r (\cdot)$ has to cross $\Gamma^{lb}(\cdot)$ on time interval $[t_0,t_1]$ due to definition of $\Gamma$ (see Fig. \ref{Fig4}right). Hence, there exists a characteristic $\Gamma^b(\cdot)$ such that $\Gamma^b(t_1)=\Gamma$ and $\Gamma^b(\cdot)$ is more to the left than $\Gamma^{lb}$ (one needs to take a concatenation of $[(\Gamma^{lb}(t_0)+\beta)/2]^r (\cdot)$ and $\Gamma^{lb}(\cdot)$). This gives contradiction. Hence, $\Gamma=\beta^r(t_1)$.
\end{proof}

The next technical result shows an important convergence property of an integral of $(u_x^+)^2$ in the case when characteristics of a weak solution of the Camassa-Holm equation exhibit behaviour presented in Lemma \ref{Lem_CDeltaChar}.

\begin{lemma}
\label{Lem18}
Let $u$ be a weak solution of \eqref{eq_WeakCH1}-\eqref{eq_WeakCH3} and fix $0 \le t_0 < T$. Let $\beta(\cdot)$ be a characteristic of $u$ and let $\{\beta^\delta\}_{\delta \in \Delta}$ be a collection of characteristics of $u$ such that 
\begin{itemize}
\item $\Delta \subset (0,\infty)$ has an accumulation point $0$, 
\item for every $t_1 \in [t_0,T)$ the function  
$\delta \mapsto \beta^{\delta}(t_1)$ is nondecreasing and %as $\delta$ decreases and
\begin{equation}
\label{eq_condlim}
\lim_{\delta \to 0^+} \beta^{\delta}(t_1) = \beta(t_1).
\end{equation}
\end{itemize}
Then
\begin{equation}
\label{Eq_cont0}
\lim_{\delta \to 0^+} \left( \limsup_{t \to t_0^+} \int_{(\beta(t), \beta^\delta(t) ]} (u_x^+)^2(t,x) dx\right) = 0.
\end{equation}
Similarly, if $\alpha(\cdot)$ is a characteristic of $u$ and $\{\alpha^\delta\}_{\delta \in \Delta}$ is a family of characteristics such that 
\begin{itemize}
\item $\Delta \subset (0,\infty)$ has an accumulation point $0$, 
\item for every $t_1 \in [t_0,T)$ the function  
$\delta \mapsto \alpha^{\delta}(t_1)$ is nonincreasing and 
\begin{equation}
\label{eq_condlim1}
\lim_{\delta \to 0^+} \alpha^{\delta}(t_1) = \alpha(t_1)
\end{equation}
\end{itemize}
then
\begin{equation}
\label{Eq_cont01}
\lim_{\delta \to 0^+} \left( \limsup_{t \to t_0^+} \int_{[\alpha_\delta(t), \alpha(t) )} (u_x^+)^2(t,x) dx\right) = 0.
\end{equation}

\end{lemma}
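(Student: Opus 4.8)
The plan is to prove the statement for the family $\{\beta^\delta\}$; the estimate \eqref{Eq_cont01} for $\{\alpha^\delta\}$ then follows by the reflection $x\mapsto -x$, under which $\tilde u(t,x):=-u(t,-x)$ is again a weak solution, characteristics map to characteristics, a nonincreasing left-family maps to a nondecreasing right-family, and $(u_x^+)^2$ is preserved.

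\emph{Reduction.} First I would reduce the double limit to a single $\delta$-uniform bound, namely that for each fixed $\delta\in\Delta$,
\[
\limsup_{t\to t_0^+}\int_{(\beta(t),\beta^\delta(t)]}(u_x^+)^2(t,x)\,dx \;\le\; \int_{[\beta(t_0),\beta^\delta(t_0)]}(u_x^+)^2(t_0,x)\,dx.
\]
Granting this, I let $\delta\to 0^+$: the convergence hypothesis \eqref{eq_condlim} applied at $t_1=t_0$ (admissible since $t_0\in[t_0,T)$), together with the monotonicity of $\delta\mapsto\beta^\delta(t_0)$, gives $\beta^\delta(t_0)\downarrow\beta(t_0)$, so $[\beta(t_0),\beta^\delta(t_0)]$ shrinks to the single point $\beta(t_0)$. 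Since $(u_x^+)^2(t_0,\cdot)\in L^1(\mathbb{R})$ (as $u(t_0,\cdot)\in H^1$), absolute continuity of the integral forces the right-hand side to $0$; as the integrand is nonnegative, this yields \eqref{Eq_cont0}.

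\emph{Change of variables.} For the fixed-$\delta$ bound I would transport the time-$t$ integral back to $t_0$ via Proposition \ref{Prop_ChangeOfV}. Set $A:=(\beta(t_0),\beta^\delta(t_0))\cap L_{t_0,t}^{unique}$ and $M_{t-t_0}(\zeta)=\zeta^l(t)$. I first record $M_{t-t_0}(A)\subseteq[\beta(t),\beta^\delta(t)]$: for $\zeta\in A$ the forward characteristic is unique on $[t_0,t]$ and hence cannot be crossed, so $\beta(\cdot)$ (starting left of $\zeta$) stays weakly to its left and $\beta^\delta(\cdot)$ (starting right of $\zeta$) stays weakly to its right, giving $\beta(t)\le\zeta^l(t)\le\beta^\delta(t)$. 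Proposition \ref{Prop_ChangeOfV} then yields
\[
\int_{M_{t-t_0}(A)}(u_x^+)^2(t,z)\,dz=\int_{A}\big(v^+(t)\big)^2\,e^{\int_{t_0}^t v(s)\,ds}\,d\zeta ,
\]
with $v(s)=u_x(s,\zeta(s))$. Splitting $A$ along the sets $L_{t_0,t}^{unique,N}$, on which $|v|\le N$, the Jacobian satisfies $e^{\int_{t_0}^t v}\to 1$, while by \eqref{Eq_PropL} and the Lebesgue-point property $v(t)\to v(t_0)=u_x(t_0,\zeta)$ as $t\to t_0^+$. Since the integrand is bounded by $N^2 e^{N(T-t_0)}$ on a bounded set, dominated convergence (followed by monotone convergence in $N$) gives $\int_A(v^+(t))^2 e^{\int v}\,d\zeta\to\int_{(\beta(t_0),\beta^\delta(t_0))}(u_x^+)^2(t_0,\zeta)\,d\zeta$.

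\emph{Coverage and the main obstacle.} It remains to show that the sliver is, up to a Lebesgue-null set, \emph{exactly} the image $M_{t-t_0}(A)$, i.e. that no extra energy enters $(\beta(t),\beta^\delta(t)]$ from outside $[\beta(t_0),\beta^\delta(t_0)]$ or, more subtly, from the null set of non-unique (``fan'') initial points, whose forward image can a priori carry positive measure. By the same non-crossing argument, any $z$ in the sliver whose leftmost backward characteristic lands at a forward-unique footprint $\zeta=z^{lb}(t_0)\in L_{t_0,t}^{unique}$ satisfies $z=\zeta^l(t)$ and $\zeta\in[\beta(t_0),\beta^\delta(t_0)]$, hence lies in $M_{t-t_0}(A)$; thus the uncovered part is contained in $\{z:z^{lb}(t_0)\notin L_{t_0,t}^{unique}\}$, and the crux is that this set is Lebesgue-null. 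I would establish this backward a.e.-regularity by applying Proposition \ref{Prop_MainCH} to the dual weak solution $u^{tb}(s,x):=-u(t-s,x)$, which furnishes for a.e. $z$ a unique backward characteristic with a well-behaved footprint, and I would use the right-continuity of rightmost characteristics from Lemma \ref{Lem_CDeltaChar} to prevent stray mass from accumulating against the moving right endpoint $\beta^\delta(\cdot)$. I expect this coverage/no-influx step — reconciling the null non-unique set at $t_0$ with its possibly fat forward image and pinning the footprints inside the source interval — to be the main obstacle. Once it is in place, combining it with the change-of-variables limit gives the fixed-$\delta$ bound, and the reduction step completes the proof.
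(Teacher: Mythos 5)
Your reduction step is where the argument breaks down: the fixed-$\delta$ inequality
\begin{equation*}
\limsup_{t\to t_0^+}\int_{(\beta(t),\beta^\delta(t)]}(u_x^+)^2(t,x)\,dx \;\le\; \int_{[\beta(t_0),\beta^\delta(t_0)]}(u_x^+)^2(t_0,x)\,dx
\end{equation*}
is false in general. It amounts to asserting that no energy is created inside the sliver at time $t_0$, which is exactly the phenomenon the paper is built around: if a peakon--antipeakon pair is created at time $t_0$ at a point strictly between $\beta(t_0)$ and $\beta^\delta(t_0)$ (Example \ref{Ex_peakantipeak}), then by finite propagation speed the emerging pair stays inside $(\beta(t),\beta^\delta(t)]$ for $t$ close to $t_0$, the left-hand side is at least $2H_0^2>0$, while the right-hand side can be $0$. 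The same example defeats your ``coverage'' step, and not for a repairable reason: the set of $z$ in the sliver whose backward characteristic lands at a forward-branching footprint is precisely the fan opened by the creation point; it has positive Lebesgue measure for each $t>t_0$ and carries all the created energy. Applying Proposition \ref{Prop_MainCH} to the time-reversed solution gives a.e.\ \emph{backward} uniqueness from time $t$, but the footprint $z^{lb}(t_0)$ of a backward-unique characteristic need not lie in $L^{unique}_{t_0,t}$ --- in the example all these backward characteristics are individually fine yet collapse onto the single branching point --- so backward regularity cannot pin the uncovered set down to a null set. Even granting a corrected version of your inequality (with $\mu^+$ of the closed sliver added on the right), letting $\delta\to0^+$ would only bound the quantity in \eqref{Eq_cont0} by $\mu^+(t_0,\{\beta(t_0)\})$, which need not vanish; so the strategy of transporting mass \emph{backward} from $t$ to $t_0$ cannot yield the conclusion.

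The paper avoids this entirely by arguing \emph{forward} to a later time rather than backward to $t_0$. It fixes $\epsilon$ and a time $t_1\in(t_0,T)$ close to $t_0$ (subject to quantitative conditions coming from the Riccati equation \eqref{Eq_PropL}), then uses hypothesis \eqref{eq_condlim} \emph{at $t_1$} to choose $\delta$ so small that the integral over $(\beta(t_1),\beta^\delta(t_1)]$ is below $\epsilon/100$. If the integral over the sliver at some intermediate $t\in(t_0,t_1)$ were $\ge\epsilon$, the narrowness of the sliver forces most of that mass onto the set where $u_x^+\ge1$; by \eqref{Eq_PropL} such large positive slopes persist (staying above $1/2$) up to $t_1$, and the change-of-variables formula of Proposition \ref{Prop_ChangeOfV} with a controlled Jacobian transports at least $\epsilon/100$ of mass to time $t_1$, a contradiction. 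The moral difference is that control at a strictly later time $t_1$, where the slivers genuinely shrink, is propagated back to all intermediate times via forward persistence of positive slope --- no claim about time $t_0$ itself, where mass may be created, is ever needed. You would need to restructure your proof around this forward comparison for it to go through.
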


\begin{proof}
We only prove \eqref{Eq_cont0}, the proof of \eqref{Eq_cont01} being analogous.

Take arbitrary $\epsilon > 0$ and ${t}_1 \in  (t_0,T)$ such that
\begin{itemize}
\item $t_1 \in \left(t_0,t_0+ \frac \epsilon {100(1+2K)}\right)$ where $K:=\sup(|u|)$ is a bound on the propagation speed of characteristics,
\item  
\begin{equation}
\label{Eq_Omega1}
\Omega(t_1-t_0)\le 1,
\end{equation}
where function $\Omega(\cdot):=\sqrt{LC} \tan(\cdot \sqrt{LC} -\frac \pi 2)$ is discussed in \cite[Definition 5.5]{CHGJ},
\item 
\begin{equation}
\label{eq_sqrt}
\sqrt{LC} \tan (-\sqrt{LC}(t_1-t_0) + \arctan(1/\sqrt{LC}))>1/2,
\end{equation} 
where $L,C$ are certain constants dependent only on the $L^{\infty}([0,\infty), H^1(\mathbb{R}))$ norm of the solution $u$ (see \cite {CHGJ}),
\item $t_1 - t_0< \frac {\epsilon} {4 (\sup(u^2)+\sup(P))}.$
\end{itemize}
Let us comment that the third technical condition means that, due to \cite[(26)]{CHGJ}, if $u_x(t,\zeta) > 1$ for some $t \in [t_0,t_1]$ and if $\zeta \in L_{t,t_1}^{unique} \cap S_{t,t_1}$ then $u_x(s,\zeta(s))>1/2$ for every $s \in [t,t_1]$ such that $u_x(s,\zeta(s))$ exists. Let us also mention that due $L^{\infty}([0,\infty),H^1(\mathbb{R}))$ regularity of $u$, functions $u$ and $P$ are globally bounded and so the quantities $\sup(u^2)$, $\sup(P)$ are well defined.

Let now $\delta$  be so small that $\beta^{\delta}(t_0)-\beta(t_0) < \epsilon/(100(1+2K))$ and
\begin{equation}
\label{Eq_eps100}
\int_{(\beta(t_1),\beta^\delta(t_1)]} (u_x^+)^2(t_1,x)dx < \epsilon/100.
\end{equation}
We will show that 
$$\int_{(\beta(t),\beta^\delta(t)]} (u_x^+)^2(t,x)dx < \epsilon$$
for every $t \in (t_0,t_1)$, see Fig. \ref{FigA}.

\begin{figure}[h!]
\center
\includegraphics[width=4.5cm]{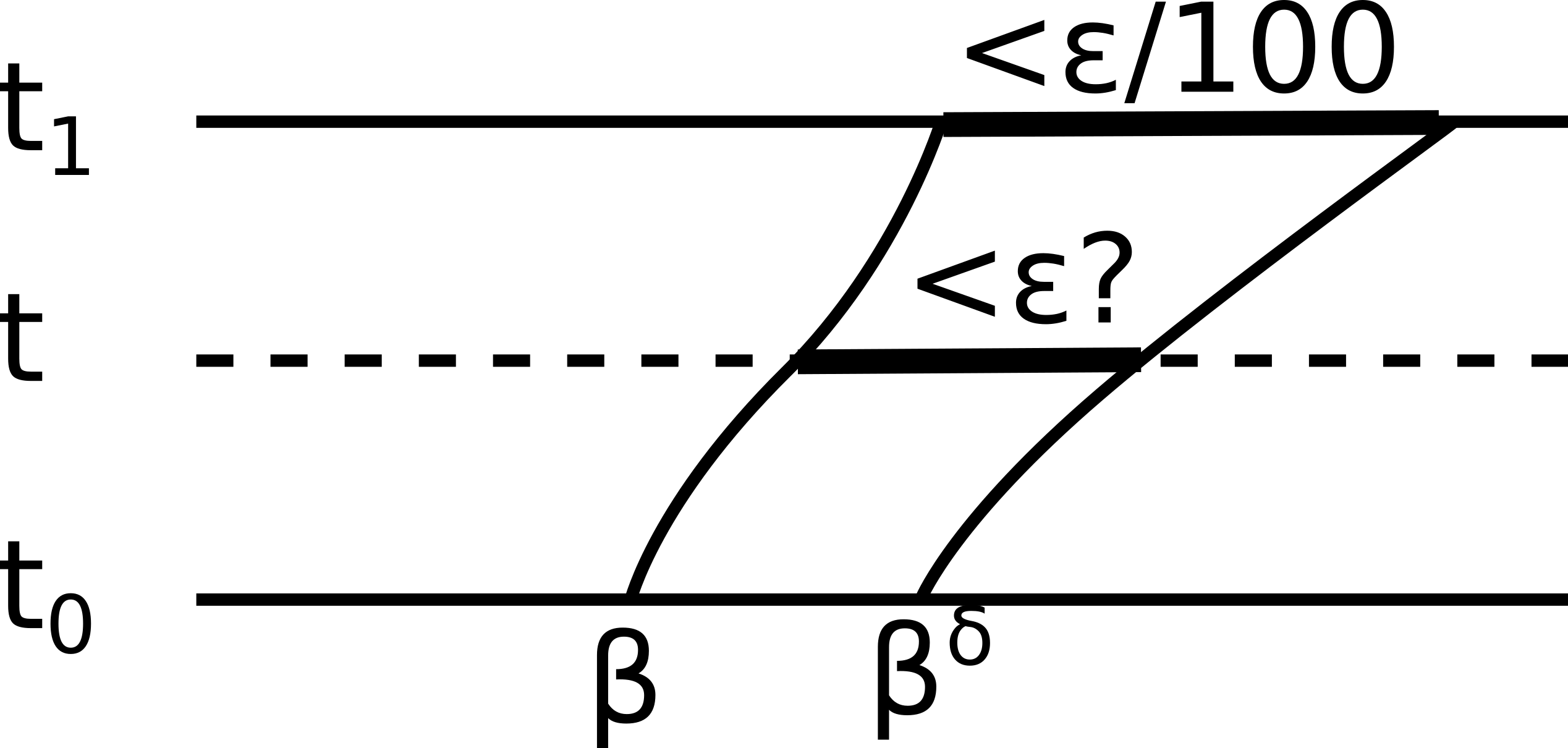}
\caption{Illustration of the proof of Lemma  \ref{Lem18}. We prove that if $t_1$ is sufficiently close to $t_0$ and $\delta$ is small enough then $\int_{(\beta(t_1),\beta^\delta(t_1)]} (u_x^+)^2(t_1,x)dx < \epsilon/100$ implies $\int_{(\beta(t),\beta^\delta(t)]} (u_x^+)^2(t,x)dx < \epsilon$ for every $t \in [t_0,t_1]$.}
\label{FigA}
\end{figure}

Suppose the contrary. Then there exists $t \in (t_0,t_1)$ such that 
$$\int_{(\beta(t),\beta^\delta(t)]} (u_x^+)^2(t,x)dx \ge \epsilon.$$
Due to finite propagation speed of characteristics ($K=\sup(|u|)$) we have 
$$\beta^\delta(t) - \beta(t) < \beta^{\delta}(t_0)-\beta(t_0) + 2K \epsilon/(100(1+2K)) < \epsilon/100.$$
Hence, 
$$\int_{(\beta(t),\beta^\delta(t)] \cap \{
u_x^+(t,\cdot) \ge 1\}} (u_x^+)^2(t,x)dx \ge (99/100)\epsilon.$$

Moreover, $|\{u_x^+(t,\cdot) \ge 1\} \backslash I_{t_1-t}|=0$ due to \eqref{Eq_Omega1}, where $I_{t_1-t}$ is defined in \cite[Definition 5.5]{CHGJ}, and hence by \cite[Lemma 5.7]{CHGJ} and \cite[Lemma 5.9]{CHGJ} we can find $N$ so big that 
$$\int_{(\beta(t),\beta^\delta(t)] \cap \{
u_x^+(t,\cdot) \ge 1\} \cap L_{t,t_1}^{unique,N}\cap S_{t,t_1}} (u_x^+)^2(t,x)dx \ge (98/100)\epsilon.$$

Using now \eqref{eq_sqrt} we obtain that if $\zeta \in (\beta(t),\beta^\delta(t)] \cap \{
u_x^+(t,\cdot) \ge 1\} \cap L_{t,t_1}^{unique,N}\cap S_{t,t_1}$ then $u_x(t_1,\zeta(t_1))>1/2$ and hence, by \cite[Proposition 7.1]{CHGJ}, 
\begin{equation*}
e^{-\epsilon} \le e^{- \frac {2(t_1 - t_0)}{1/2} ((\sup u)^2 + \sup(P))} \le \frac {v^2(t_1)M'_{t_1 - t} (\zeta)}{v^2(t)},
\end{equation*}
where $v(t)=u_x(t,\zeta(t))$ and $M_{t_1-t}(\zeta) = \zeta^l(t_1)$ is the leftmost characteristic satisfying $\zeta^l(t)=\zeta$, which for the range of $\zeta$ that are of interest to us are in fact unique characteristics.

Using now the change of variables formula from Proposition \ref{Prop_ChangeOfV}  
we obtain 
\begin{eqnarray*}
\int_{(\beta(t_1), \beta^\delta(t_1)]} (u_x^+)^2(t_1,z)dz &\ge& \int_{M_{t_1-t} ((\beta(t),\beta^\delta(t)] \cap \{
u_x^+(t,\cdot) \ge 1\} \cap L_{t,t_1}^{unique,N}\cap S_{t,t_1})} (u_x^+)^2(t_1,z) dz \\
&=& \int_{(\beta(t),\beta^\delta(t)] \cap \{
u_x^+(t,\cdot) \ge 1\} \cap L_{t,t_1}^{unique,N}\cap S_{t,t_1}} (u_x^+)^2(t_1,M_{t_1-t}(\zeta)) M'_{t_1 - t}(\zeta) d\zeta \\
&\ge& \int_{(\beta(t),\beta^\delta(t)] \cap \{
u_x^+(t,\cdot) \ge 1\} \cap L_{t,t_1}^{unique,N}\cap S_{t,t_1}} (u_x^+)^2(t,\zeta) e^{-\epsilon} d\zeta \\
&\ge& 98/100 \epsilon e^{-\epsilon} \ge 1/100 \epsilon,
\end{eqnarray*}
which contradicts \eqref{Eq_eps100}.
\end{proof}
\begin{corollary}
\label{Cor_limlim}
Let $u$ be a weak solution of \eqref{eq_WeakCH1}-\eqref{eq_WeakCH3}. Then for every $t_0 \ge 0$ and every $\alpha,\beta \in \mathbb{R}$ we have
\begin{equation}
\label{Eq_cont1}
\lim_{\delta \to 0^+} \left( \limsup_{t \to t_0^+} \int_{(\beta^r(t), (\beta+\delta)^r(t) ]} (u_x^+)^2(t,x) dx\right) = 0,
\end{equation}
\begin{equation}
\label{Eq_cont2}
\lim_{\delta \to 0^+} \left( \limsup_{t \to t_0^+} \int_{[(\alpha-\delta)^l(t), \alpha^l(t))} (u_x^+)^2(t,x) dx\right) = 0.
\end{equation}
\end{corollary}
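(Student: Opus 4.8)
The plan is to derive Corollary \ref{Cor_limlim} as a direct application of Lemma \ref{Lem18}, whose two assertions \eqref{Eq_cont0} and \eqref{Eq_cont01} are already phrased for an arbitrary characteristic together with a monotone approximating family. The only task is therefore to verify that the families of rightmost (respectively leftmost) characteristics $\{(\beta+\delta)^r(\cdot)\}_{\delta>0}$ and $\{(\alpha-\delta)^l(\cdot)\}_{\delta>0}$ satisfy the hypotheses of Lemma \ref{Lem18}, after which the two displayed limits follow immediately. Concretely, to obtain \eqref{Eq_cont1} I would set $\beta(\cdot):=\beta^r(\cdot)$ and $\beta^\delta(\cdot):=(\beta+\delta)^r(\cdot)$, and take $\Delta$ to be any sequence of positive reals decreasing to $0$ (e.g. $\Delta=\{1/n\}_{n\ge 1}$), so that $0$ is an accumulation point of $\Delta$.

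The first hypothesis of Lemma \ref{Lem18} to check is monotonicity: for each fixed $t_1\in[t_0,T)$ the map $\delta\mapsto(\beta+\delta)^r(t_1)$ must be nondecreasing. This is a consequence of the ordering property of rightmost characteristics recorded just before Definition \ref{Def_Lt}. Indeed, if $0<\delta_1\le\delta_2$ then $(\beta+\delta_1)^r(\cdot)$ is a characteristic starting at $\beta+\delta_1\le\beta+\delta_2=(\beta+\delta_2)^r(t_0)$; by the comparison principle for characteristics (two characteristics cannot cross in a way that reverses their order, since $u$ is continuous and by Lemma \ref{Lem_supinfChar} suprema/infima of characteristics are again characteristics) one gets $(\beta+\delta_1)^r(t_1)\le(\beta+\delta_2)^r(t_1)$, which is exactly the required monotonicity. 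The second hypothesis, the pointwise limit $\lim_{\delta\to 0^+}(\beta+\delta)^r(t_1)=\beta^r(t_1)$ for every $t_1\in[t_0,T)$, is precisely the content of Lemma \ref{Lem_CDeltaChar}, which I may invoke verbatim. With both hypotheses in place, the conclusion \eqref{Eq_cont0} of Lemma \ref{Lem18} reads $\lim_{\delta\to 0^+}\bigl(\limsup_{t\to t_0^+}\int_{(\beta^r(t),(\beta+\delta)^r(t)]}(u_x^+)^2(t,x)\,dx\bigr)=0$, which is exactly \eqref{Eq_cont1}.

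The proof of \eqref{Eq_cont2} is entirely symmetric. I would set $\alpha(\cdot):=\alpha^l(\cdot)$ and $\alpha^\delta(\cdot):=(\alpha-\delta)^l(\cdot)$, again with $\Delta=\{1/n\}$. Monotonicity now means $\delta\mapsto(\alpha-\delta)^l(t_1)$ is nonincreasing, which follows from the same ordering principle applied to leftmost characteristics: for $0<\delta_1\le\delta_2$ the starting points satisfy $\alpha-\delta_1\ge\alpha-\delta_2$, hence $(\alpha-\delta_1)^l(t_1)\ge(\alpha-\delta_2)^l(t_1)$. The pointwise convergence $\lim_{\delta\to 0^+}(\alpha-\delta)^l(t_1)=\alpha^l(t_1)$ is the leftmost analogue of Lemma \ref{Lem_CDeltaChar}; since the roles of left and right are interchanged by the obvious reflection $x\mapsto -x$ (which maps rightmost characteristics to leftmost ones and preserves the structure of \eqref{eq_WeakCH1}), this convergence holds by the same argument. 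Then the second conclusion \eqref{Eq_cont01} of Lemma \ref{Lem18} yields precisely \eqref{Eq_cont2}.

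I do not expect any genuine obstacle here, since the corollary is essentially a repackaging of the two preceding lemmas; the only mild care required is in stating the monotonicity correctly (nondecreasing for the rightmost family, nonincreasing for the leftmost family) so that the intervals of integration $(\beta^r(t),(\beta+\delta)^r(t)]$ and $[(\alpha-\delta)^l(t),\alpha^l(t))$ have the orientation assumed in Lemma \ref{Lem18}, and in noting that the leftmost analogue of Lemma \ref{Lem_CDeltaChar} is available by the reflection symmetry. The substantive analytic work -- the change-of-variables estimate driving \eqref{Eq_cont0} -- has already been carried out in the proof of Lemma \ref{Lem18}, so the corollary is obtained merely by specializing that lemma to these two natural monotone families of extremal characteristics.
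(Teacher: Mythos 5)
Your proposal is correct and follows essentially the same route as the paper, which likewise proves \eqref{Eq_cont1} by setting $\beta(\cdot)\equiv\beta^r(\cdot)$, $\beta^{\delta}(\cdot)\equiv(\beta+\delta)^r(\cdot)$ and applying Lemma \ref{Lem18} via Lemma \ref{Lem_CDeltaChar}, with \eqref{Eq_cont2} handled "in a similar fashion." Your additional verifications (monotonicity from the ordering of extremal characteristics, and the reflection $x\mapsto -x$ giving the leftmost analogue of Lemma \ref{Lem_CDeltaChar}) are correct details that the paper leaves implicit.
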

\begin{proof}
To prove \eqref{Eq_cont1} take $\beta(\cdot) \equiv \beta^r(\cdot)$ and $\beta^{\delta}(\cdot) \equiv (\beta+\delta)^r(\cdot)$ and, using Lemma \ref{Lem_CDeltaChar} apply Lemma \ref{Lem18}. \eqref{Eq_cont2} follows in a similar fashion. 
\end{proof}

Lemma \ref{Lem18} is a direct consequence of an even more general result contained in Lemma \ref{Lem19}, which we now formulate and prove. Let us note that the proof of Lemma \ref{Lem19} follows the lines of the proof of Lemma \ref{Lem18} with a few minor modifications. Since, however, Lemma \ref{Lem19} is much more abstract, we decided to keep both proofs for clarity of the exposition. Importantly, the formulation of Lemma \ref{Lem18} will not always be sufficient for our purposes, and on one occasion we will have to resort to the general formulation given in Lemma \ref{Lem19}.

\begin{lemma}
\label{Lem19}
Let $u$ be a weak solution of \eqref{eq_WeakCH1}-\eqref{eq_WeakCH3} and fix $t_0,T$, which satisfy $0\le t_0 < T$. Let $\{\mathcal{B}^{\delta}\}_{\delta \in \Delta}$ be a family of collections of characteristics on $[t_0,T]$ satisfying 
\begin{itemize}
\item $\Delta \subset (0,\infty)$ has an accumulation point $0$, 
\item $\mathcal{B}^{\delta_1} \subset \mathcal{B}^{\delta_2}$ for $\delta^1 < \delta^2$,
\item for every $t \in [t_0,T]$ the set $B^{\delta}(t):=
\{\beta(t): \beta(\cdot) \in {\mathcal B}^{\delta}\}$ is Borel measurable,
\item for every $t \in [t_0,T]$
\begin{equation*} \lim_{\delta \to 0^+} \int_{B^{\delta}(t)} (u_x^+)^2(t,x) = 0,
\end{equation*}
\item there exists $D>0$ such that for every $t \in [t_0,T]$ we have 
\begin{equation}
\label{Eq_DiamBdelta}
{\rm diam}(B^{\delta}(t))<D.
\end{equation} 
\end{itemize}
Then
\begin{equation*}
%\label{Eq_cont0}
\lim_{\delta \to 0^+} \left( \limsup_{t \to t_0^+} \int_{B^{\delta}(t)} (u_x^+)^2(t,x) dx\right) = 0.
\end{equation*}

\end{lemma}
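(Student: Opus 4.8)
The plan is to run the proof of Lemma \ref{Lem18} essentially verbatim, with the Borel set $B^\delta(t)$ playing the role of the interval $(\beta(t),\beta^\delta(t)]$, and with the diameter bound \eqref{Eq_DiamBdelta} replacing the smallness of that interval's length. Fix $\epsilon>0$. Since the Lebesgue measure of $B^\delta(t)$ is no longer small, only bounded (by \eqref{Eq_DiamBdelta} together with $|B^\delta(t)|\le{\rm diam}(B^\delta(t))<D$), I cannot split the integrand at the fixed level $u_x^+=1$. Instead I will split at a level $h=h(\epsilon,D)\in(0,1)$ so small that $h^2 D<\epsilon/100$; then on the low-derivative part
\begin{equation*}
\int_{B^\delta(t)\cap\{u_x^+<h\}}(u_x^+)^2(t,x)\,dx\le h^2|B^\delta(t)|\le h^2 D<\epsilon/100,
\end{equation*}
which furnishes the analogue of the bound on $\int_{\{u_x^+<1\}}(u_x^+)^2$ obtained in Lemma \ref{Lem18} from the small length of the interval.

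Next I would pick $t_1\in(t_0,T)$ close enough to $t_0$ that the four technical conditions in the proof of Lemma \ref{Lem18} hold with $1$ and $1/2$ replaced throughout by $h$ and $h/2$; in particular $\Omega(t_1-t_0)\le h$, and, by \cite[(26)]{CHGJ}, $u_x(t,\zeta)\ge h$ together with $\zeta\in L_{t,t_1}^{unique}\cap S_{t,t_1}$ forces $u_x(s,\zeta(s))>h/2$ for all $s\in[t,t_1]$. Each of these is satisfied once $t_1-t_0$ is small enough (the smallness now depending on $h$ as well), so such $t_1$ exists. Using then the hypothesis $\lim_{\delta\to0^+}\int_{B^\delta(t_1)}(u_x^+)^2(t_1,x)\,dx=0$, I fix $\delta$ so small that $\int_{B^\delta(t_1)}(u_x^+)^2(t_1,x)\,dx<\epsilon/100$, and I claim that $\int_{B^\delta(t)}(u_x^+)^2(t,x)\,dx<\epsilon$ for every $t\in(t_0,t_1)$.

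The heart of the matter is again a forward-transport argument run by contradiction. If $\int_{B^\delta(t)}(u_x^+)^2(t,x)\,dx\ge\epsilon$ for some $t\in(t_0,t_1)$, the low-derivative estimate above leaves at least $(99/100)\epsilon$ of the mass on $B^\delta(t)\cap\{u_x^+\ge h\}$, and, exactly as in Lemma \ref{Lem18}, \cite[Lemma 5.7]{CHGJ} and \cite[Lemma 5.9]{CHGJ} let one retain $(98/100)\epsilon$ of it on $A:=B^\delta(t)\cap\{u_x^+\ge h\}\cap L_{t,t_1}^{unique,N}\cap S_{t,t_1}$ for $N$ large. The single genuinely new (and short) step is the inclusion $M_{t_1-t}(A)\subset B^\delta(t_1)$: rather than interval ordering, I use the definition of $B^\delta$ directly. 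Any $\zeta\in A$ equals $\beta(t)$ for some $\beta(\cdot)\in\mathcal{B}^\delta$, and since $\zeta\in L_{t,t_1}^{unique}$ the characteristic through $\zeta$ is unique forwards, whence $M_{t_1-t}(\zeta)=\zeta^l(t_1)=\beta(t_1)\in B^\delta(t_1)$. Combining this inclusion with the bound $u_x(s,\zeta(s))>h/2$ and \cite[Proposition 7.1]{CHGJ}, which gives $v^2(t_1)M'_{t_1-t}(\zeta)\ge v^2(t)e^{-\epsilon}$ on $A$, the change of variables formula of Proposition \ref{Prop_ChangeOfV} yields
\begin{equation*}
\int_{B^\delta(t_1)}(u_x^+)^2(t_1,z)\,dz\ge\int_{M_{t_1-t}(A)}(u_x^+)^2(t_1,z)\,dz=\int_A(u_x^+)^2(t_1,M_{t_1-t}(\zeta))M'_{t_1-t}(\zeta)\,d\zeta\ge\tfrac{98}{100}\epsilon\,e^{-\epsilon}\ge\tfrac{1}{100}\epsilon,
\end{equation*}
contradicting the choice of $\delta$. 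Hence the claim holds, so $\limsup_{t\to t_0^+}\int_{B^\delta(t)}(u_x^+)^2(t,x)\,dx\le\epsilon$ for all small $\delta$, and letting $\epsilon\to0$ gives the assertion.

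I expect the main obstacle to be precisely the replacement of the small-length estimate: one must verify that the level $h$ can be pushed down far enough that $h^2D$ is negligible, while the propagation of the lower bound $u_x>h\Rightarrow u_x>h/2$ needed to apply Proposition \ref{Prop_ChangeOfV} and \cite[Proposition 7.1]{CHGJ} still survives, at the price of taking $t_1-t_0$ smaller. The Borel measurability of each $B^\delta(t)$, assumed in the hypotheses, is what guarantees that $A$ is an admissible set in Proposition \ref{Prop_ChangeOfV}.
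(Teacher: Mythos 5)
Your proposal is correct and follows essentially the same route as the paper's proof: the same level-set splitting (your threshold $h$ with $h^2D<\epsilon/100$ is exactly the paper's cut at $(u_x^+)^2\ge\epsilon/(100D)$), the same choice of $t_1$ via $\Omega$ and the tangent bound, the same reduction to $L_{t,t_1}^{unique,N}\cap S_{t,t_1}$, and the same contradiction via Proposition \ref{Prop_ChangeOfV} and \cite[Proposition 7.1]{CHGJ}. Your explicit verification of the inclusion $M_{t_1-t}(A)\subset B^{\delta}(t_1)$ via forward uniqueness is a detail the paper leaves implicit, and it is argued correctly.
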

\begin{proof} 
Take arbitrary $\epsilon > 0$ and ${t}_1 \in  (t_0,T)$ such that
\begin{itemize}
\item ${t}_1 \in \left(t_0,t_0+ \frac{\epsilon}{2(\sup(u^2)+\sup(P))} \left(\frac {\epsilon} {200D}\right)^{1/2}\right),$ %where $K:=\sup(|u|)$ is a bound on the propagation speed of characteristics,
\item  
\begin{equation}
\label{Eq_Omega2}
\Omega(t_1-t_0)\le \left(\frac{\epsilon}{100D}\right)^{1/2},
\end{equation}
with $\Omega$ defined in \cite[Definition 5.5]{CHGJ}
%where function $\Omega(\cdot)$ is defined and discussed in \cite[Definition 5.5]{CHGJ},
\item $\sqrt{LC} \tan \left(-\sqrt{LC}(t_1-t_0) + \arctan((\epsilon/(100D))^{1/2})/\sqrt{LC})\right)> \left({\epsilon}/{(200D)}\right)^{1/2} $, where $L,C$ are  constants dependent only on the $L^{\infty}([0,\infty), H^1(\mathbb{R}))$ norm of the solution $u$ (see \cite {CHGJ}).
%\item $t_1 - t_0< \frac {\epsilon} {4 (\sup(u)^2+\sup(P))}.$
\end{itemize}
Finally, take $\delta$ so small that 
$$\int_{B^{\delta}(t_1)} (u_x^+)^2(t_1,x)dx < \epsilon/100.$$
We will show that 
$$\int_{B^{\delta}(t)} (u_x^+)^2(t,x)dx < \epsilon$$
for every $t \in (t_0,t_1)$.
Suppose the contrary. Then there exists $t \in (t_0,t_1)$ such that 
$$\int_{B^{\delta}(t)} (u_x^+)^2(t,x)dx \ge \epsilon.$$
Since, however, $|B^{\delta}(t)| < D$ we conclude 
$$\int_{B^\delta(t) \cap \{
(u_x^+)^2(t,\cdot) \ge \epsilon/(100D)\}} (u_x^+)^2(t,x)dx \ge (99/100)\epsilon.$$
Moreover, $|\{(u_x^+)^2(t,\cdot) \ge (\epsilon/100D)\} \backslash I_{t_1-t}|=0$ due to \eqref{Eq_Omega2} and hence by \cite[Lemma 5.7]{CHGJ} we can find $N$ so big that 
$$\int_{B^\delta(t) \cap \{
(u_x^+)^2(t,\cdot) \ge \epsilon/(100D)\} \cap L_{t,t_1}^{unique,N} \cap S_{t,t_1}} (u_x^+)^2(t,x)dx \ge (98/100)\epsilon.$$
Using now the fact that if $\zeta \in B^\delta(t) \cap \{
(u_x^+)^2(t,\cdot) \ge \epsilon/(100D)\} \cap L_{t,t_1}^{unique,N}\cap S_{t,t_1}$ then $u_x(t_1,\zeta(t_1))>(\epsilon/(200D))^{1/2}$ (for a.e. $\zeta$) we obtain, by \cite[Proposition 7.1]{CHGJ}, 
\begin{equation*}
e^{-\epsilon} \le e^{- \frac {2(t_1 - t_0)}{(\epsilon/(200D))^{1/2}} ((\sup u)^2 + \sup(P))} \le \frac {v^2(t_1)M'_{t_1 - t} (\zeta)}{v^2(t)}.
\end{equation*}

Using now the change of variables formula from Proposition \ref{Prop_ChangeOfV} we obtain
\begin{eqnarray*}
\int_{B^\delta(t_1)} (u_x^+)^2(t_1,z)dz 
&\ge& \int_{M_{t_1-t} (B^\delta(t) \cap \{
(u_x^+)^2(t,\cdot) \ge \epsilon/(100D)\} \cap L_{t,t_1}^{unique,N}\cap S_{t,t_1})} (u_x^+)^2(t_1,z) dz \\
&=& \int_{B^\delta(t) \cap \{
(u_x^+)^2(t,\cdot) \ge \epsilon/(100D)\} \cap L_{t,t_1}^{unique,N}\cap S_{t,t_1}} (u_x^+)^2(t_1,M_{t_1-t}(\zeta)) M'_{t_1 - t}(\zeta) d\zeta \\
&\ge& \int_{B^\delta(t) \cap \{
(u_x^+)^2(t,\cdot) \ge \epsilon/(100D)\} \cap L_{t,t_1}^{unique,N}\cap S_{t,t_1}} (u_x^+)^2(t,\zeta)) e^{-\epsilon} d\zeta \\
&\ge& (98/100) \epsilon e^{-\epsilon} \ge  \epsilon/100,
\end{eqnarray*}
which gives contradiction.
\end{proof}
\begin{remark}
Lemma \ref{Lem18} follows from Lemma \ref{Lem19} by setting $\mathcal{B}^{\delta} := \{\zeta(\cdot): \beta(t) \le \zeta(t) \le \beta^{\delta}(t) \mbox{ for every } t \in [t_0,T]\}$ and observing that by finite propagation speed of characteristics \eqref{eq_condlim} implies \eqref{Eq_DiamBdelta}.
\end{remark}

\section{Proof of Theorem \ref{thB}}
\label{Sec_proofMain}
To prove Theorem \ref{thB} we show it for the following sequence of classes of $B \subset \mathbb{R}$:
\begin{itemize}
\item $B$ -- closed interval,
\item $B$ -- open interval,
\item $B$ -- arbitrary open set,
\item $B$ -- arbitrary compact set,
\item $B$ -- arbitrary Borel set.
\end{itemize}

\noindent{\bf ${\bf B}$ - closed interval}

Let $B = [\alpha,\beta]$. Then the thick pushforward of $B$ is given by $B(t) = [\alpha^l(t),\beta^r(t)]$, where $\alpha^l(\cdot)$ is the leftmost and $\beta^r(\cdot)$ the rightmost characteristic emanating from $\alpha$ and $\beta$, respectively. To prove Theorem \ref{thB} for $B$ in this form, it suffices thus to show the following result.
\begin{proposition}
\label{Prop_16}
Let $u$ be a weak solution of \eqref{eq_WeakCH1}-\eqref{eq_WeakCH3}. The measure of accretion $\mu^{+}$ associated to $u$ satisfies
\begin{equation}
\label{Eq_altdef}
\mu^+(t_0,[\alpha,\beta]) = \lim_{t \to t_0^+} \int_{[\alpha^l(t),\beta^r(t)]} (u_x^+)^2 (t,x) dx - \int_{[\alpha,\beta]} (u_x^+)^2(t_0,x)dx
\end{equation}
for every $\alpha,\beta \in \mathbb{R}, \alpha \le \beta,$ and $t_0 \in [0,\infty)$. 
\end{proposition}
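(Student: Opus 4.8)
The plan is to reduce the whole statement to a single identity about the moving domain $[\alpha^l(t),\beta^r(t)]$ and then to sandwich that domain between fixed continuous cutoffs. By Theorem \ref{Th_Cadlag} and the Riesz representation theorem the functional $\phi\mapsto\lambda(\phi):=\lim_{t\to t_0^+}\int_{\mathbb{R}}\phi(x)(u_x^+)^2(t,x)dx$ is a nonnegative Radon measure, which I denote $\lambda$. With this notation Definition \ref{Def_MAD} states precisely that, as measures, $\mu^+(t_0,\cdot)=\lambda-(u_x^+)^2(t_0,\cdot)\,dx$, since the two sides agree against every $\phi\in C_c(\mathbb{R})$. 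Evaluating on the Borel set $[\alpha,\beta]$ gives $\mu^+(t_0,[\alpha,\beta])=\lambda([\alpha,\beta])-\int_{[\alpha,\beta]}(u_x^+)^2(t_0,x)dx$, and since the subtracted term already coincides with the one in \eqref{Eq_altdef}, Proposition \ref{Prop_16} reduces to
\[
\lambda([\alpha,\beta])=\lim_{t\to t_0^+}\int_{[\alpha^l(t),\beta^r(t)]}(u_x^+)^2(t,x)dx .
\]

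For the upper bound I use only finite propagation speed. Writing $K:=\sup|u|$, every characteristic satisfies $|\dot\zeta|\le K$, so $[\alpha^l(t),\beta^r(t)]\subset[\alpha-K(t-t_0),\beta+K(t-t_0)]$. Fix $\delta>0$ and pick a continuous compactly supported $\psi_\delta^+$ with $0\le\psi_\delta^+\le1$ and $\psi_\delta^+\equiv1$ on $[\alpha-\delta,\beta+\delta]$. Then for all $t$ with $K(t-t_0)<\delta$ one has $\chi_{[\alpha^l(t),\beta^r(t)]}\le\psi_\delta^+$ pointwise, hence $\int_{[\alpha^l(t),\beta^r(t)]}(u_x^+)^2(t,\cdot)\le\int\psi_\delta^+(u_x^+)^2(t,\cdot)$. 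Passing to the limit $t\to t_0^+$ gives $\limsup_{t\to t_0^+}\int_{[\alpha^l(t),\beta^r(t)]}(u_x^+)^2(t,\cdot)\le\lambda(\psi_\delta^+)$, and letting $\delta\to0^+$ with $\psi_\delta^+\downarrow\chi_{[\alpha,\beta]}$ yields, by monotone convergence for $\lambda$, the bound $\limsup_{t\to t_0^+}\int_{[\alpha^l(t),\beta^r(t)]}(u_x^+)^2(t,\cdot)\le\lambda([\alpha,\beta])$.

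The lower bound is the delicate step, and it is where Corollary \ref{Cor_limlim} is indispensable. I enlarge the interval: the thick pushforward of $[\alpha-\delta,\beta+\delta]$ is $[(\alpha-\delta)^l(t),(\beta+\delta)^r(t)]$, and splitting this interval gives
\[
\int_{[(\alpha-\delta)^l(t),(\beta+\delta)^r(t)]}(u_x^+)^2(t,\cdot)=\int_{[\alpha^l(t),\beta^r(t)]}(u_x^+)^2(t,\cdot)+S_L(\delta,t)+S_R(\delta,t),
\]
where $S_L,S_R$ are the integrals over the boundary strips $[(\alpha-\delta)^l(t),\alpha^l(t))$ and $(\beta^r(t),(\beta+\delta)^r(t)]$; by \eqref{Eq_cont2} and \eqref{Eq_cont1} they satisfy $\lim_{\delta\to0^+}\limsup_{t\to t_0^+}S_L=\lim_{\delta\to0^+}\limsup_{t\to t_0^+}S_R=0$. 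Again by finite propagation speed, for $t$ with $K(t-t_0)<\delta/2$ one checks $(\alpha-\delta)^l(t)<\alpha-\delta/2$ and $(\beta+\delta)^r(t)>\beta+\delta/2$, so a continuous cutoff $\Psi_\delta$ with $0\le\Psi_\delta\le1$, $\Psi_\delta\equiv1$ on $[\alpha,\beta]$ and support in $[\alpha-\delta/2,\beta+\delta/2]$ satisfies $\Psi_\delta\le\chi_{[(\alpha-\delta)^l(t),(\beta+\delta)^r(t)]}$. Rearranging the split then gives $\int_{[\alpha^l(t),\beta^r(t)]}(u_x^+)^2(t,\cdot)\ge\int\Psi_\delta(u_x^+)^2(t,\cdot)-S_L(\delta,t)-S_R(\delta,t)$; taking $\liminf_{t\to t_0^+}$ (using $\int\Psi_\delta(u_x^+)^2(t,\cdot)\to\lambda(\Psi_\delta)$ and $\liminf_t(A_t-B_t)\ge\lim_tA_t-\limsup_tB_t$) and then $\delta\to0^+$ with $\Psi_\delta\downarrow\chi_{[\alpha,\beta]}$ yields $\liminf_{t\to t_0^+}\int_{[\alpha^l(t),\beta^r(t)]}(u_x^+)^2(t,\cdot)\ge\lambda([\alpha,\beta])$. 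Together with the upper bound this shows the limit exists and equals $\lambda([\alpha,\beta])$, which is the required identity.

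The main obstacle to flag is that both cutoffs $\psi_\delta^+$ and $\Psi_\delta$ are identically $1$ on the \emph{closed} interval $[\alpha,\beta]$, so both bounds converge to $\lambda$ of the closed interval and the possible endpoint atoms of $\lambda$ at $\alpha$ and $\beta$ are captured on both sides. A naive inner cutoff supported strictly inside $[\alpha,\beta]$ would only recover $\lambda$ of the open interval, leaving a gap equal to these endpoint atoms; the device that closes this gap is precisely the enlargement by $\delta$ combined with the vanishing of the boundary strips $S_L,S_R$, which is the technical heart encoded in Lemma \ref{Lem18} and Corollary \ref{Cor_limlim}.
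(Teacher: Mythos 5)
Your proof is correct and follows essentially the same route as the paper: both directions are obtained by sandwiching with trapezoidal cutoffs, using finite propagation speed of characteristics for the upper bound and the enlargement by $\delta$ together with Corollary \ref{Cor_limlim} for the lower bound, with outer regularity of the limiting measure closing the argument. The only cosmetic differences are that you name the limiting Radon measure $\lambda$ explicitly and write the boundary strips $S_L,S_R$ as an explicit decomposition (which also lets you deduce existence of the limit from the $\liminf$/$\limsup$ sandwich rather than citing it separately), whereas the paper compares directly with the enlarged moving interval $[(\alpha-\delta)^l(t),(\beta+\delta)^r(t)]$.
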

\begin{proof}
By regularity of the measure $\mu^+$, it suffices to show that
\begin{equation}
\label{Eq_limmuplus}
\lim_{t \to t_0^+}  \int_{[\alpha^l(t),\beta^r(t)]} (u_x^+)^2 (t,x)dx = \lim_{\epsilon \to 0^+} \lim_{t \to t_0^+} \int_{\mathbb{R}} \phi^{\epsilon} (x) (u_x^+)^2(t,x)dx,
\end{equation} 
where (see Fig. \ref{Fig5}left)
\begin{equation*}
\phi^{\epsilon} (x) = 
\begin{cases} 
1 & \mbox{ if } x \in [\alpha - \epsilon, \beta + \epsilon],  \\
0 & \mbox{ if } x \le \alpha - 2 \epsilon \mbox{ or } x \ge \beta + 2 \epsilon, \\
\frac 1 \epsilon (x - (\alpha - 2\epsilon)) & \mbox{ if } x \in (\alpha - 2 \epsilon, \alpha - \epsilon),\\
-\frac 1 \epsilon (x - (\beta + 2\epsilon)) & \mbox{ if } x \in (\beta + \epsilon, \beta + 2\epsilon).
\end{cases}
\end{equation*}
Indeed, by definition of measure $\mu^+$ (Definition \ref{Def_MAD}) we have
\begin{equation*}
\int_{\mathbb{R}} \phi^{\epsilon}(x) d\mu^+(t_0,dx) :=  \lim_{t \to t_0^+} \int_{\mathbb{R}} \phi^{\epsilon}(x) (u_x^+)^2(t,x) dx  -  \int_{\mathbb{R}} \phi^{\epsilon}(x) (u_x^+)^2(t_0,x) dx.
\end{equation*}
Passage to the limit $\epsilon \to 0$ in combination with outer regularity of the measure $\mu^+$ and \eqref{Eq_limmuplus} leads to \eqref{Eq_altdef}.

To prove \eqref{Eq_limmuplus} we observe that by finite propagation speed of characteristics we have $\phi^{\epsilon} \ge \bold{1}_{[\alpha^l(t), \beta^r(t)]}$ for $t$ sufficiently close to $t_0$ (Fig. \ref{Fig5}middle). Hence, for every $\epsilon > 0$, 
\begin{equation*}
\lim_{t \to t_0^+} \int_{[\alpha^l(t),\beta^r(t)]} (u_x^+)^2 (t,x)dx  \le \lim_{t \to t_0^+} \int_{\mathbb{R}} \phi^{\epsilon} (x) (u_x^+)^2 (t,x)dx.
\end{equation*}
and thus
\begin{equation*}
\lim_{t \to t_0^+} \int_{[\alpha^l(t),\beta^r(t)]} (u_x^+)^2 (t,x)dx  \le \lim_{\epsilon \to 0^+}\lim_{t \to t_0^+} \int_{\mathbb{R}} \phi^{\epsilon} (x) (u_x^+)^2 (t,x)dx.
\end{equation*}

\begin{figure}[h!]
\center
\includegraphics[width=15cm]{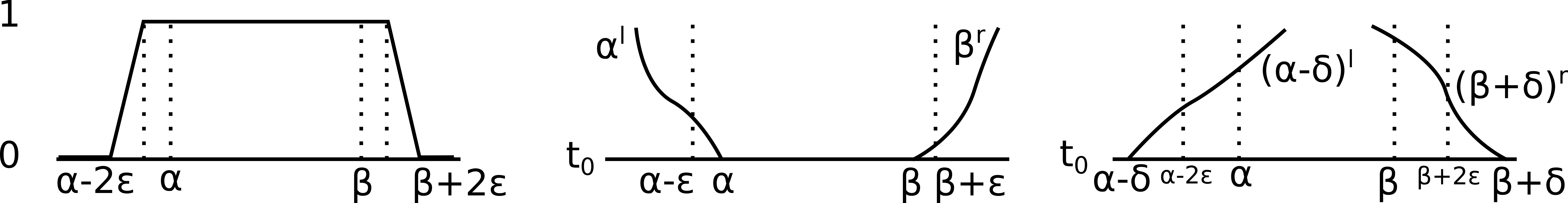}
\caption{Graph of the function $\phi^{\epsilon}$ (left). Illustration of the inequality $\phi^{\epsilon} \ge \bold{1}_{[\alpha^l(t), \beta^r(t)]}$ for $t$ sufficiently close to $t_0$ (middle). Illustration of the fact that for every $\delta>0$ we have $\phi^{\epsilon} \le \bold{1}_{[(\alpha-\delta)^l(t), (\beta+\delta)^r(t)]}$ for $\epsilon$ small enough and $t$ sufficiently close to $t_0$ (right). }
\label{Fig5}
\end{figure}

On the other hand, for every $\delta > 0$ we have (see Fig. \ref{Fig5}right)
\begin{equation*}
\lim_{\epsilon \to 0^+}\lim_{t \to t_0^+} \int_{\mathbb{R}} \phi^{\epsilon} (x) (u_x^+)^2 (t,x)dx \le
\lim_{t \to t_0^+} \int_{[(\alpha-\delta)^l(t),(\beta+\delta)^r(t)]} (u_x^+)^2 (t,x)dx.
\end{equation*}
By Corollary \ref{Cor_limlim} we can pass to the limit $\delta \to 0^+$ to obtain
\begin{equation*}
\lim_{\epsilon \to 0^+}\lim_{t \to t_0^+} \int_{\mathbb{R}} \phi^{\epsilon} (x) (u_x^+)^2 (t,x)dx \le
\lim_{t \to t_0^+} \int_{[\alpha^l(t),\beta^r(t)]} (u_x^+)^2 (t,x)dx.
\end{equation*}
Let us comment that the limits on the right-hand side of the last two inequalities exist by \cite[Theorem 2.7]{CHGJ}.
\end{proof}
\begin{remark}
Propositon \ref{Prop_16} shows that the 'canonical' choice of characteristics in \cite[Theorem 2.8]{CHGJ}, resulting in a measure is the leftmost characteristic emanating from $\alpha$ and the rightmost characteristic emanating from $\beta$.
\end{remark}

\begin{corollary}[of Proposition \ref{Prop_16}]
\label{Cor_muplus}
Let $u$ be a weak solution of \eqref{eq_WeakCH1}-\eqref{eq_WeakCH3}. The measure $\mu^{+}$ satisfies for every $\alpha,\beta \in \mathbb{R}$, $\alpha<\beta$:
\begin{eqnarray*}
\mu^+(t_0,\{\alpha\}) &=& \lim_{t \to t_0^+} \int_{[\alpha^l(t),\alpha^r(t)]} (u_x^+)^2 (t,x) dx,\\
\mu^+(t_0,(\alpha,\beta)) &=& \lim_{t \to t_0^+} \int_{(\alpha^r(t),\beta^l(t))} (u_x^+)^2 (t,x) dx - \int_{(\alpha,\beta)} (u_x^+)^2(t_0,x)dx.
\end{eqnarray*}
\end{corollary}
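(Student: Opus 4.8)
The plan is to deduce both identities directly from Proposition \ref{Prop_16}, exploiting the finite additivity of the nonnegative measure $\mu^+(t_0,\cdot)$ together with the geometry of the leftmost and rightmost characteristics. No new characteristic estimates are needed: the whole argument is measure-theoretic bookkeeping once the closed-interval formula is in hand.

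First, for the singleton I would specialize Proposition \ref{Prop_16} to the degenerate closed interval $[\alpha,\alpha]=\{\alpha\}$. Its thick pushforward is $[\alpha^l(t),\alpha^r(t)]$, and the subtracted term $\int_{[\alpha,\alpha]}(u_x^+)^2(t_0,x)\,dx$ is an integral over a single point, hence vanishes. The first formula follows immediately. I would stress that $[\alpha^l(t),\alpha^r(t)]$ need not be degenerate, since characteristics emanating from $\alpha$ may be nonunique; this is exactly the mechanism allowing $\mu^+(t_0,\{\alpha\})$ to carry positive mass.

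Next, for the open interval I would use the disjoint decomposition $[\alpha,\beta]=\{\alpha\}\cup(\alpha,\beta)\cup\{\beta\}$ and additivity of $\mu^+(t_0,\cdot)$ to write $\mu^+(t_0,(\alpha,\beta))=\mu^+(t_0,[\alpha,\beta])-\mu^+(t_0,\{\alpha\})-\mu^+(t_0,\{\beta\})$. Substituting the closed-interval formula from Proposition \ref{Prop_16} and the singleton formula just obtained (applied at both endpoints), the three limits combine into a single limit, each existing by \cite[Theorem 2.7]{CHGJ}. The $t_0$-integral reduces from $[\alpha,\beta]$ to $(\alpha,\beta)$ because the two endpoints carry no Lebesgue mass.

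The only genuine point to verify, and what I expect to be the main (if modest) obstacle, is the geometric identity for the spatial domains. For $t$ sufficiently close to $t_0$, the finite propagation speed $K=\sup|u|$ keeps the four characteristics ordered as $\alpha^l(t)\le\alpha^r(t)<\beta^l(t)\le\beta^r(t)$: at $t_0$ the gap $\beta-\alpha>0$ is strict and can close at rate at most $2K$, so the strict inequality $\alpha^r(t)<\beta^l(t)$ persists on a right-neighbourhood of $t_0$. Consequently the decomposition $[\alpha^l(t),\beta^r(t)]=[\alpha^l(t),\alpha^r(t)]\cup(\alpha^r(t),\beta^l(t))\cup[\beta^l(t),\beta^r(t)]$ holds as a disjoint union, and subtracting the integrals over the two closed end-blocks leaves precisely $\int_{(\alpha^r(t),\beta^l(t))}(u_x^+)^2(t,x)\,dx$. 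Passing to the limit $t\to t_0^+$ yields the second formula, with no appeal to Lemma \ref{Lem18} or Corollary \ref{Cor_limlim} required at this stage.
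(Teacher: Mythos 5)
Your argument is correct and is precisely the derivation the paper intends by labelling this a corollary of Proposition \ref{Prop_16}: specialize the closed-interval formula to the degenerate interval $[\alpha,\alpha]$ (where the $t_0$-integral vanishes), then use additivity of the nonnegative Borel measure $\mu^+(t_0,\cdot)$ over the disjoint decomposition $[\alpha,\beta]=\{\alpha\}\cup(\alpha,\beta)\cup\{\beta\}$, together with the finite-propagation-speed ordering $\alpha^l(t)\le\alpha^r(t)<\beta^l(t)\le\beta^r(t)$ for $t$ near $t_0$ to split the spatial integral accordingly. You correctly isolate the one point needing verification (the persistence of the strict gap $\alpha^r(t)<\beta^l(t)$), and no appeal to Lemma \ref{Lem18} or Corollary \ref{Cor_limlim} is needed here.
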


\noindent{\bf ${\bf B}$ - open interval}

Let us begin by introducing the notions of \emph{left-rightmost characteristic} and \emph{right-leftmost characteristic}.

\begin{definition}
\label{Def_Azeta}
Let $A_{\zeta}$ be a family of characteristics given by
\begin{equation*}
A_{\zeta}:= \{\zeta(\cdot): \zeta(t_0)=\zeta \mbox{ and there exists } t_1>t_0 \mbox{ such that } \zeta(t)=\zeta^r(t) \mbox{ for every } t \in [t_0,t_1] \}.
\end{equation*}
The \emph{left-rightmost characteristic}, $\zeta^{rl}$, is defined, using Lemma \ref{Lem_supinfChar}, by (see Fig. \ref{Fig6})
\begin{equation*}
\zeta^{rl}(t) := \inf \{ \zeta(t): \zeta(\cdot) \in A_{\zeta}\}.
\end{equation*}
Similarly, let $\hat{A}_{\zeta}$ be a family of characteristics given by
\begin{equation*}
\hat{A}_{\zeta}:= \{\zeta(\cdot): \zeta(t_0)=\zeta \mbox{ and there exists } t_1>t_0 \mbox{ such that } \zeta(t)=\zeta^l(t) \mbox{ for every } t \in [t_0,t_1] \}
\end{equation*}
Then \emph{right-leftmost characteristic}, $\zeta^{lr}$, is defined by
\begin{equation*}
\zeta^{lr}(t) := \sup \{ \zeta(t): \zeta(\cdot) \in \hat{A}_{\zeta}\}.
\end{equation*}
\begin{figure}[h!]
\center
\includegraphics[width=4.7cm]{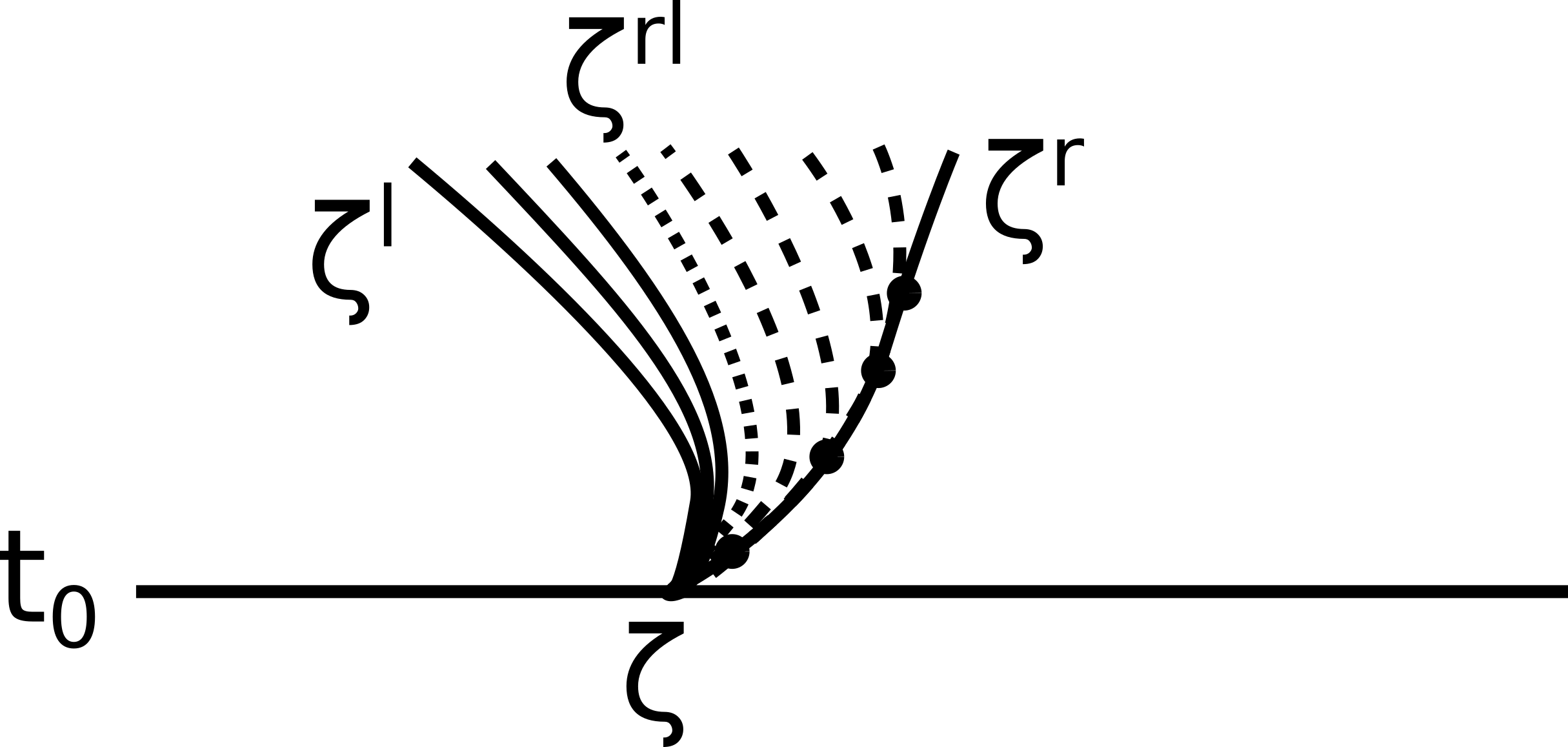}
\caption{Illustration of the concept of the left-rightmost characteristic. The left-rightmost characteristic $\zeta^{rl}(\cdot)$, dotted line, is the infimum of the family (dashed lines) of characteristics which coincide with the rightmost characteristic $\zeta^r(\cdot)$ up to some timepoint $t>t_0$. Since there might exist characteristics emanating from $\zeta$ which are more to the left than $\zeta^{rl}$, the left-rightmost characteristic does not in general coincide with the leftmost characteristic.}
\label{Fig6}
\end{figure}

\end{definition}

\begin{lemma}
\label{Lem_44}
Let $u$ be a weak solution of \eqref{eq_WeakCH1}-\eqref{eq_WeakCH3} and fix $t_0 \ge 0$. Then for every $\zeta \in \mathbb{R}$ we have
\begin{equation}
\label{Eq_contleftright}
\lim_{t \to t_0^+} \int_{[\zeta^{rl}(t), \zeta^r(t) ]} (u_x^+)^2(t,x) dx = 0.
\end{equation}
and, similarly,
\begin{equation}
\label{Eq_contrightleft}
\lim_{t \to t_0^+} \int_{[\zeta^{l}(t), \zeta^{lr}(t) ]} (u_x^+)^2(t,x) dx = 0.
\end{equation}
\end{lemma}
\begin{remark}
The convergences from Lemma \ref{Lem_44} hold in stark contrast to \begin{equation*}
\lim_{t \to t_0^+} \int_{[\zeta^{l}(t), \zeta^r(t) ]} (u_x^+)^2(t,x) dx = \mu^+(\{\zeta\}),
\end{equation*}
which in general does not vanish.
\end{remark}
\begin{proof}[Proof of Lemma \ref{Lem_44}]
Define $$I:=\inf\{t: t>t_0 \mbox{ and there exists a characteristic } \zeta(\cdot) \in A_{\zeta} \mbox{ such that } \zeta(t) <\zeta^r(t)\},$$ where $A_\zeta$ is given by Definition \ref{Def_Azeta}. If $I > t_0$ then the proof is trivial, since  
$(\zeta^{rl}(t), \zeta^r(t) ] = \emptyset$ for all $t<I$. So, suppose $I=t_0$. Then there exists a sequence $\delta_n$ such that $\delta_n \to 0$ as $n \to \infty$ and for every $n>0$ there exists a characteristic $\zeta(\cdot) \in A_{\zeta}$ such that $$\delta_n= \inf\{t-t_0 : \zeta(t) < \zeta^r(t)\}.$$ In other words, timepoint $t_0 + \delta_n$ is a branching point for some characteristic $\zeta(\cdot) \in A_{\zeta}$. Let $\beta^{\delta_n}$ be the characteristic defined by
\begin{equation*}
\beta^{\delta_n}(\cdot) := \inf \{\zeta(\cdot): \zeta(t)=\zeta^r(t) \mbox{ for every } t \in [t_0,t_0 + \delta_n] \}.
\end{equation*}
We interpret $\beta^{\delta_n}$ as the leftmost characteristic from the point of branching. With such choice of $\beta^{\delta_n}$ and $\Delta:= \{\delta_n\}_{n=1}^{\infty}$ as well as $\beta(\cdot) \equiv \zeta^{rl}(\cdot)$ the assumptions of Lemma \ref{Lem18} are satisfied. Using Lemma \ref{Lem18} and the property 
$$\int_{[\zeta^{rl}(t),\zeta^r(t)]}(u_x^+)^2(t,x)dx = \int_{[\zeta^{rl}(t),\zeta^{\delta_n}(t)]}(u_x^+)^2(t,x)dx$$
for $t \in [t_0,t_0+\delta_n]$ we infer \eqref{Eq_contleftright}. The proof of \eqref{Eq_contrightleft} is analogous.
\end{proof}
\begin{lemma}
\label{Lem14}
Let $u$ be a weak solution of \eqref{eq_WeakCH1}-\eqref{eq_WeakCH3}. Let $B = (\alpha,\beta)$ be a bounded open interval. Then for $t$ sufficiently close to $t_0$ (so that $\alpha^r(\cdot)$ and $\beta^l(\cdot)$ do not cross)
\begin{equation*}
(\alpha^r(t),\beta^l(t))\subset B(t)\subset [\alpha^{rl}(t),\beta^{lr}(t)],
\end{equation*}
where $B(t)$ is the thick pushforward, $\alpha^r$ is the rightmost characteristic with $\alpha^r(t_0)=\alpha$, $\beta^l$ is the leftmost characteristic with $\beta^l(t_0)=\beta$, $\alpha^{rl}$ is the left-rightmost characteristic with $\alpha^{rl}(t_0)=\alpha$ and $\beta^{lr}$ is the right-leftmost characteristic satsifying $\beta^{lr}(t_0) = \beta$.
\end{lemma}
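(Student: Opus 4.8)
The plan is to establish the two inclusions separately. Both rest on a single elementary device, which I will call the \emph{crossing-and-concatenation} argument: if a characteristic $\gamma(\cdot)$ starts at $t_0$ on one side of an extremal characteristic but ends at time $t$ strictly on the other side, then by continuity their graphs meet at some $\tau\in(t_0,t)$, and gluing the extremal characteristic on $[t_0,\tau]$ to $\gamma$ on $[\tau,t]$ yields, by Lemma \ref{Lem_supinfChar} (equivalently \cite[Lemma 3.1]{DafHS}), a genuine characteristic. Played against the defining extremality of $\alpha^r,\beta^l,\alpha^{rl},\beta^{lr}$, this produces every inequality we need.

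I would treat $B(t)\subset[\alpha^{rl}(t),\beta^{lr}(t)]$ first, as it is the more direct inclusion. Let $\gamma(\cdot)$ be any characteristic with $\zeta:=\gamma(t_0)\in(\alpha,\beta)$. For the upper bound, if $\gamma(t)\le\beta^l(t)$ there is nothing to prove, since $\beta^l(t)\le\beta^{lr}(t)$. Otherwise $\gamma(t)>\beta^l(t)$ while $\gamma(t_0)=\zeta<\beta=\beta^l(t_0)$, so $\gamma$ crosses $\beta^l$ at some $\tau\in(t_0,t)$; concatenating $\beta^l$ on $[t_0,\tau]$ with $\gamma$ on $[\tau,t]$ gives a characteristic emanating from $\beta$ that coincides with $\beta^l$ on $[t_0,\tau]$, hence lies in the family $\hat A_\beta$ of Definition \ref{Def_Azeta}; by definition of $\beta^{lr}$ its value at $t$ is at most $\beta^{lr}(t)$, so $\gamma(t)\le\beta^{lr}(t)$. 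The bound $\gamma(t)\ge\alpha^{rl}(t)$ is symmetric: either $\gamma(t)\ge\alpha^r(t)\ge\alpha^{rl}(t)$, or $\gamma$ crosses $\alpha^r$ and the resulting concatenation lies in $A_\alpha$, whence $\gamma(t)\ge\alpha^{rl}(t)$. Note this inclusion in fact holds for every $t>t_0$.

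For $(\alpha^r(t),\beta^l(t))\subset B(t)$ (an interval that is nonempty precisely under the stated hypothesis that $\alpha^r$ and $\beta^l$ do not cross) I would argue via backward characteristics. Fix $y\in(\alpha^r(t),\beta^l(t))$ and let $y^{lb}(\cdot),y^{rb}(\cdot)$ be the leftmost and rightmost characteristics on $[t_0,t]$ with $y^{lb}(t)=y^{rb}(t)=y$, which exist by Lemma \ref{Lem_supinfChar} applied backward in time. A crossing-and-concatenation argument against $\alpha^r$ gives $y^{rb}(t_0)>\alpha$: otherwise $y^{rb}$ would start at or to the left of $\alpha^r(t_0)=\alpha$ yet end at $y>\alpha^r(t)$, producing a characteristic from $\alpha$ strictly to the right of $\alpha^r$. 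Symmetrically, comparison with $\beta^l$ yields $y^{lb}(t_0)<\beta$. It then remains to produce a \emph{single} characteristic landing inside $(\alpha,\beta)$, and the key point is that the landing set $R:=\{\gamma(t_0):\gamma\text{ is a characteristic on }[t_0,t]\text{ with }\gamma(t)=y\}$ is a closed interval, namely $[y^{lb}(t_0),y^{rb}(t_0)]$. This is the connectedness of the cross-section of the solution funnel of $\dot\gamma=u(s,\gamma)$ (Kneser's theorem, applicable since $u$ is continuous and, by \cite[Lemma 3.1]{DafHS}, solutions of this ODE are exactly characteristics); alternatively it follows from Lemma \ref{Lem_supinfChar} combined with the monotonicity and one-sided continuity of extremal characteristics (Lemma \ref{Lem_CDeltaChar}) through an intermediate-value argument. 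Granting this, $R$ meets $(\alpha,\beta)$ because $y^{lb}(t_0)\le y^{rb}(t_0)$ with $y^{lb}(t_0)<\beta$ and $y^{rb}(t_0)>\alpha$; any characteristic landing at some $\zeta\in R\cap(\alpha,\beta)$, read forward in time, emanates from $\zeta\in B$ and satisfies $\gamma(t)=y$, so $y\in B(t)$.

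I expect the connectedness (intermediate-value) step to be the main obstacle. Because characteristics are genuinely nonunique, the extremal backward characteristics $y^{lb},y^{rb}$ may both land \emph{outside} $(\alpha,\beta)$ in the merging scenario, and only the fact that every intermediate landing point is likewise attained allows us to reach strictly inside the \emph{open} interval. The rest is routine: verifying that all concatenations are again characteristics (Lemma \ref{Lem_supinfChar}), and observing that the hypothesis ``$t$ close to $t_0$'' enters only to guarantee $\alpha^r(t)<\beta^l(t)$.
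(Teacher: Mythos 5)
Your proof is correct. The inclusion $B(t)\subset[\alpha^{rl}(t),\beta^{lr}(t)]$ is handled exactly as in the paper: crossing-and-concatenation against $\beta^l$ (resp.\ $\alpha^r$), followed by the observation that the concatenated curve belongs to $\hat{A}_{\beta}$ (resp.\ $A_{\alpha}$) because the crossing time is strictly later than $t_0$, so extremality of $\beta^{lr}$ (resp.\ $\alpha^{rl}$) applies. For the inclusion $(\alpha^r(t),\beta^l(t))\subset B(t)$, however, you take a genuinely different and heavier route than necessary. The paper uses a \emph{single} backward characteristic, the leftmost one $\zeta^{lb}$ through $(t,\zeta)$, and plays it against both endpoints at once: if $\zeta^{lb}(t_0)\le\alpha$, concatenating $\alpha^r$ (up to the crossing) with $\zeta^{lb}$ gives a characteristic from $\alpha$ ending at $\zeta>\alpha^r(t)$, contradicting the rightmost property of $\alpha^r$; if $\zeta^{lb}(t_0)\ge\beta$, concatenating $\beta^l$ with $\zeta^{lb}$ gives a characteristic from $\beta$ ending at $\zeta<\beta^l(t)$, contradicting the leftmost property of $\beta^l$. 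Hence $\alpha<\zeta^{lb}(t_0)<\beta$ and the proof ends there. You instead derive only the one-sided bounds $y^{rb}(t_0)>\alpha$ and $y^{lb}(t_0)<\beta$ and are then forced to invoke Kneser's theorem (connectedness of the backward funnel cross-section) to locate a landing point inside the open interval --- a step you rightly flag as the main obstacle and leave sketchy. That step is in fact true in one dimension and provable by the same concatenation device, so your argument closes; but it is entirely avoidable, because the bound you proved for $y^{rb}$ against $\alpha^r$ uses only the rightmost property of $\alpha^r$, not any extremal property of the backward characteristic, and therefore holds verbatim for $y^{lb}$ as well. What your extra generality buys is a description of the full landing set as the interval $[y^{lb}(t_0),y^{rb}(t_0)]$, which is not needed here; what the paper's route buys is a two-line proof with no appeal to funnel connectedness.
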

\begin{proof}
If a characteristic emanating from $B=(\alpha,\beta)$ crosses $\alpha^r$ or $\beta^l$, it does so at a timepoint strictly bigger then $t_0$. This implies the second inclusion. The first inclusion can be proven as follows.
Take $\zeta \in (\alpha^r(t),\beta^l(t))$ and consider the leftmost backward characteristic $\zeta^{lb}$ with $\zeta^{lb}(t)=\zeta$. Clearly, $\zeta^{lb}(\cdot)$ does not cross $\alpha^r(\cdot)$ on $[t_0,t]$ due to the rightmost property of $\alpha^r(\cdot)$. By the same token, $\zeta^{lb}(\cdot)$ does not cross $\beta^l(\cdot)$ on $[t_0,t]$. Hence, $\alpha<\zeta^{lb}(t_0)<\beta$ and thus $\zeta=\zeta^{lb}(t) \in B(t)$.
\end{proof}
\begin{proposition}
\label{Prop16}
Let $u$ be a weak solution of \eqref{eq_WeakCH1}-\eqref{eq_WeakCH3} and let $B = (\alpha,\beta)$ be an open interval. Fix $t_0\ge 0$. Then 

\begin{equation}
\label{eq_opene1}
\lim_{t \to t_0^+} \int_{(\alpha^r(t),\beta^l(t))} (u_x^+)^2 (t,x) dx = \lim_{t \to t_0^+} \int_{B(t)} (u_x^+)^2 (t,x) dx,
\end{equation}
where $B(t)$ is the thick pushforward.
Consequently,
\begin{equation}
\label{eq_opene2}
\mu^+(t_0,(\alpha,\beta)) = \lim_{t \to t_0^+} \int_{B(t)} (u_x^+)^2 (t,x) dx - \int_{(\alpha,\beta)} (u_x^+)^2(t_0,x)dx.
\end{equation}
\end{proposition}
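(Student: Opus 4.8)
The plan is to deduce \eqref{eq_opene1} by squeezing the integral over the thick pushforward $B(t)$ between integrals over two explicit intervals supplied by Lemma \ref{Lem14}, and then to read off \eqref{eq_opene2} from Corollary \ref{Cor_muplus}. Throughout I fix $t$ so close to $t_0$ that $\alpha^r(t)<\beta^l(t)$, which is legitimate by continuity of characteristics since $\alpha^r(t_0)=\alpha<\beta=\beta^l(t_0)$; Lemma \ref{Lem14} then furnishes the inclusions $(\alpha^r(t),\beta^l(t))\subset B(t)\subset[\alpha^{rl}(t),\beta^{lr}(t)]$. Because the integrand $(u_x^+)^2$ is nonnegative, monotonicity of the integral with respect to the domain gives
\begin{equation*}
\int_{(\alpha^r(t),\beta^l(t))} (u_x^+)^2(t,x)\,dx \le \int_{B(t)} (u_x^+)^2(t,x)\,dx \le \int_{[\alpha^{rl}(t),\beta^{lr}(t)]} (u_x^+)^2(t,x)\,dx .
\end{equation*}

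The key step is to show that the two extreme integrals share a common limit as $t\to t_0^+$. Using $\alpha^{rl}(t)\le\alpha^r(t)<\beta^l(t)\le\beta^{lr}(t)$ I split the outer interval as
\begin{equation*}
[\alpha^{rl}(t),\beta^{lr}(t)] = [\alpha^{rl}(t),\alpha^r(t)] \cup (\alpha^r(t),\beta^l(t)) \cup [\beta^l(t),\beta^{lr}(t)],
\end{equation*}
so that the upper bound equals the middle integral plus the two boundary integrals. By Lemma \ref{Lem_44}, applying \eqref{Eq_contleftright} with $\zeta=\alpha$ to the strip $[\alpha^{rl}(t),\alpha^r(t)]$ and \eqref{Eq_contrightleft} with $\zeta=\beta$ to the strip $[\beta^l(t),\beta^{lr}(t)]$, both boundary integrals tend to $0$ as $t\to t_0^+$. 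Since the limit $\lim_{t\to t_0^+}\int_{(\alpha^r(t),\beta^l(t))}(u_x^+)^2(t,x)\,dx$ exists by \cite[Theorem 2.7]{CHGJ}, the upper and lower bounds have the same limit, and the squeeze theorem yields \eqref{eq_opene1}. Combining \eqref{eq_opene1} with the second identity of Corollary \ref{Cor_muplus}, namely $\mu^+(t_0,(\alpha,\beta)) = \lim_{t\to t_0^+}\int_{(\alpha^r(t),\beta^l(t))}(u_x^+)^2(t,x)\,dx - \int_{(\alpha,\beta)}(u_x^+)^2(t_0,x)\,dx$, then gives \eqref{eq_opene2}.

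The main obstacle I anticipate is not the analysis but the well-definedness of $\int_{B(t)}$, since $B(t)$ is an abstract set of characteristic endpoints rather than an interval. I would establish its Lebesgue measurability by noting that a point $x\in[\alpha^{rl}(t),\beta^{lr}(t)]$ lies in $B(t)$ exactly when some backward characteristic from $(t,x)$ reaches $(\alpha,\beta)$ at time $t_0$, i.e. when $x^{lb}(t_0)<\beta$ and $x^{rb}(t_0)>\alpha$; as $x\mapsto x^{lb}(t_0)$ and $x\mapsto x^{rb}(t_0)$ are monotone (hence measurable), $B(t)$ is an intersection of measurable sets. Should one prefer to avoid this entirely, the nonnegativity of $(u_x^+)^2$ allows the whole squeeze to be run with the inner and outer Lebesgue integrals of $\mathbf 1_{B(t)}(u_x^+)^2$, which the inclusions bound between the same two convergent quantities.

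Everything else is bookkeeping: the geometric content that drives the proof — that the two boundary strips $[\alpha^{rl}(t),\alpha^r(t)]$ and $[\beta^l(t),\beta^{lr}(t)]$ carry asymptotically no $(u_x^+)^2$-mass — is precisely what Lemma \ref{Lem_44} encodes, so once the inclusions of Lemma \ref{Lem14} are in place the result follows immediately.
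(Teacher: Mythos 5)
Your proof is correct and follows essentially the same route as the paper's: both arguments squeeze $\int_{B(t)}(u_x^+)^2\,dx$ between the integrals over $(\alpha^r(t),\beta^l(t))$ and $[\alpha^{rl}(t),\beta^{lr}(t)]$ via Lemma \ref{Lem14}, kill the two boundary strips with Lemma \ref{Lem_44}, and conclude \eqref{eq_opene2} from Corollary \ref{Cor_muplus}. Your additional remark on the measurability of $B(t)$ (or the fallback to inner and outer integrals) addresses a point the paper leaves implicit, but it does not change the substance of the argument.
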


\begin{proof}
We first prove that
\begin{equation*}
\lim_{t \to t_0^+} \int_{(\alpha^r(t),\beta^l(t))} {(u_x^+)}^2 (t,x) dx \ge \limsup_{t \to t_0^+} \int_{B(t)} {(u_x^+)}^2 (t,x) dx.
\end{equation*}
Indeed, $B(t)\subset [\alpha^{rl}(t),\beta^{lr}(t)]$ by Lemma \ref{Lem14}. Hence,
\begin{equation*}
\limsup_{t \to t_0^+} \int_{B(t)} {(u_x^+)}^2 (t,x) dx \le \limsup_{t \to t_0^+} \int_{[\alpha^{rl}(t),\beta^{lr}(t)]} {(u_x^+)}^2 (t,x) dx = \lim_{t \to t_0^+} \int_{(\alpha^r(t),\beta^l(t)} {(u_x^+)}^2 (t,x) dx,
\end{equation*}
where the last equality follows by Lemma \ref{Lem_44}. The reverse inequality 
\begin{equation*}
\lim_{t \to t_0^+} \int_{(\alpha^r(t),\beta^l(t)} (u_x^+)^2 (t,x) dx \le \liminf_{t \to t_0^+} \int_{B(t)} (u_x^+)^2 (t,x) dx
\end{equation*}
is clear since $(\alpha^r(t),\beta^l(t)) \subset B(t)$ by Lemma \ref{Lem14}. This proves \eqref{eq_opene1}. Formula \eqref{eq_opene2} follows by Corollary \ref{Cor_muplus}.
\end{proof}

\noindent{\bf ${\bf B}$ - arbitrary open set}

\begin{proposition}
Let $u$ be a weak solution of \eqref{eq_WeakCH1}-\eqref{eq_WeakCH3} and let $B = \bigcup_{n=1}^{\infty} (\alpha_n,\beta_n)$ be an arbitrary bounded open subset in $\mathbb{R}$.
Then 
\begin{equation}
\label{Eq_cont6}
\mu^+(t_0,B) = \lim_{t \to t_0^+} \int_{B(t)} (u_x^+)^2(t,x) dx - \int_B (u_x^+)^2(t_0,x) dx,
\end{equation} 
where $B(t)$ is the thick pushforward of the set $B$. Similarly,
\begin{equation}
\label{Eq_cont7}
\mu^+(t_0,B) = \lim_{t \to t_0^+} \int_{\bigcup_{n=1}^{\infty} (\alpha_n^r(t),\beta_n^l(t))  } (u_x^+)^2 (t,x)dx - \int_B (u_x^+)^2(t_0,x) dx.
\end{equation} 
\end{proposition}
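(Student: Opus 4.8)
The plan is to reduce the statement for a general bounded open set to the single–interval case already settled in Proposition~\ref{Prop16}, the only genuinely new difficulty being the interchange of the limit $t\to t_0^+$ with a countable summation. First I would write $B=\bigcup_{n=1}^\infty(\alpha_n,\beta_n)$ as the disjoint union of its connected components. Since $\mu^+(t_0,\cdot)$ is a nonnegative Radon measure, countable additivity gives $\mu^+(t_0,B)=\sum_{n}\mu^+(t_0,(\alpha_n,\beta_n))$, and by Proposition~\ref{Prop16} and Corollary~\ref{Cor_muplus} each summand equals $\lim_{t\to t_0^+}\int_{(\alpha_n^r(t),\beta_n^l(t))}(u_x^+)^2(t,x)\,dx-\int_{(\alpha_n,\beta_n)}(u_x^+)^2(t_0,x)\,dx$. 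As $\int_B(u_x^+)^2(t_0,\cdot)=\sum_n\int_{(\alpha_n,\beta_n)}(u_x^+)^2(t_0,\cdot)$, formula~\eqref{Eq_cont7} becomes equivalent to the identity $\lim_{t\to t_0^+}\int_{\bigcup_n(\alpha_n^r(t),\beta_n^l(t))}(u_x^+)^2(t,\cdot)=\sum_n\lim_{t\to t_0^+}\int_{(\alpha_n^r(t),\beta_n^l(t))}(u_x^+)^2(t,\cdot)$. Observe that for $t$ close to $t_0$ the intervals $(\alpha_n^r(t),\beta_n^l(t))$ are pairwise disjoint: order preservation of characteristics together with disjointness of the components forces $\beta_n^l(t)\le\alpha_{m}^r(t)$ whenever $\beta_n\le\alpha_m$, so the left-hand integral really is $\sum_n\int_{(\alpha_n^r(t),\beta_n^l(t))}$.

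Writing $G_M(t):=\int_{\bigcup_{n=1}^M(\alpha_n^r(t),\beta_n^l(t))}(u_x^+)^2(t,\cdot)$ for the truncations and $L_n:=\lim_{t\to t_0^+}\int_{(\alpha_n^r(t),\beta_n^l(t))}(u_x^+)^2(t,\cdot)$, the finite-union instance of Proposition~\ref{Prop16} gives $\lim_{t\to t_0^+}G_M(t)=\sum_{n=1}^M L_n$. The lower bound is immediate: from $\int_{\bigcup_n}\ge G_M$ pointwise I get $\liminf_{t\to t_0^+}\int_{\bigcup_n(\alpha_n^r(t),\beta_n^l(t))}(u_x^+)^2(t,\cdot)\ge\sum_{n=1}^M L_n$, and letting $M\to\infty$ yields the inequality ``$\ge\sum_n L_n$''.

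The hard part is the reverse inequality, that is, the \emph{uniform} (in $t$ near $t_0$) smallness of the tails $\int_{\bigcup_{n>M}(\alpha_n^r(t),\beta_n^l(t))}(u_x^+)^2(t,\cdot)$, which is exactly the phenomenon the abstract Lemma~\ref{Lem19} was built to handle. I would apply Lemma~\ref{Lem19} with $\Delta=\{1/M\}_{M\ge 1}$ and $\mathcal{B}^{1/M}$ the collection of all characteristics $\zeta(\cdot)$ trapped, for some $n>M$, between $\alpha_n^r$ and $\beta_n^l$ (these bounding curves exist and are themselves characteristics by Lemma~\ref{Lem_supinfChar}), so that $B^{1/M}(t)=\bigcup_{n>M}[\alpha_n^r(t),\beta_n^l(t)]$. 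Nestedness is clear, and the uniform diameter bound~\eqref{Eq_DiamBdelta} follows since boundedness of $B$ plus finite propagation speed confine every $B^{1/M}(t)$ to a fixed compact set. The required pointwise-in-$t$ vanishing $\lim_{M\to\infty}\int_{B^{1/M}(t)}(u_x^+)^2(t,\cdot)=0$ follows from continuity from above of the finite measure $(u_x^+)^2(t,\cdot)\,dx$: the closed intervals $[\alpha_n^r(t),\beta_n^l(t)]$ meet only at endpoints, so each point lies in at most two of them, whence the decreasing tails $\bigcup_{n>M}[\alpha_n^r(t),\beta_n^l(t)]$ shrink to the empty set. Lemma~\ref{Lem19} then delivers $\lim_{M\to\infty}\limsup_{t\to t_0^+}\int_{B^{1/M}(t)}(u_x^+)^2(t,\cdot)=0$, and since $\limsup_{t\to t_0^+}\int_{\bigcup_n}\le\sum_{n=1}^M L_n+\limsup_{t\to t_0^+}\int_{B^{1/M}(t)}(u_x^+)^2(t,\cdot)$, letting $M\to\infty$ gives ``$\le\sum_n L_n$''. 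Combined with the lower bound this proves the displayed identity, hence~\eqref{Eq_cont7}.

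Finally I would deduce~\eqref{Eq_cont6} from~\eqref{Eq_cont7} by sandwiching the thick pushforward. Applying Lemma~\ref{Lem14} componentwise gives $\bigcup_n(\alpha_n^r(t),\beta_n^l(t))\subseteq B(t)\subseteq\bigcup_n[\alpha_n^{rl}(t),\beta_n^{lr}(t)]$, so the lower estimate for $\lim_{t\to t_0^+}\int_{B(t)}$ is furnished directly by~\eqref{Eq_cont7}. For the matching upper estimate I would treat the ``collars'' $[\alpha_n^{rl}(t),\alpha_n^r(t)]$ and $[\beta_n^l(t),\beta_n^{lr}(t)]$: the first $M$ of them contribute a quantity tending to $0$ as $t\to t_0^+$ by Lemma~\ref{Lem_44}, while the collars with $n>M$ are controlled, uniformly in $t$, by a second application of Lemma~\ref{Lem19}, now trapping characteristics between $\alpha_n^{rl}$ and $\beta_n^{lr}$. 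The genuine difficulty of the whole argument is thus concentrated in this uniform tail control: the componentwise identities are exact, but passing from finite to countable unions demands precisely the uniform-in-$t$ smallness that the abstract Lemma~\ref{Lem19} supplies, and the one subtlety to keep in mind is that adjacent components may share an endpoint, so one must reason with the essentially disjoint \emph{closed} intervals rather than assume the pushed-forward pieces are strictly separated.
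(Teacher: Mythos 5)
Your argument is correct, but it is organized quite differently from the paper's. The paper attacks \eqref{Eq_cont6} head-on: it runs a recursive ``merging'' construction producing characteristics $\alpha_n(\cdot),\beta_n(\cdot)$ whose intervals $(\alpha_n(t),\beta_n(t))$ are pairwise disjoint for \emph{every} $t$ and whose union is $B(t)$ up to a null set; disjointness then makes the pointwise-in-$t$ hypothesis of Lemma \ref{Lem19} immediate (the tail of a convergent series), a single application of Lemma \ref{Lem19} controls the remainder, and \eqref{Eq_cont7} is dismissed as ``similar and simpler.'' You reverse the order: you prove \eqref{Eq_cont7} first using the naturally disjoint family $(\alpha_n^r(t),\beta_n^l(t))$ plus one application of Lemma \ref{Lem19}, and then deduce \eqref{Eq_cont6} by sandwiching $B(t)$ between $\bigcup_n(\alpha_n^r(t),\beta_n^l(t))$ and $\bigcup_n[\alpha_n^{rl}(t),\beta_n^{lr}(t)]$, paying for the upper bound with collar estimates (Lemma \ref{Lem_44} for finitely many components, Lemma \ref{Lem19} again for the tail). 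What your route buys is the avoidance of the paper's somewhat delicate recursive construction and its bookkeeping of crossing times $t_*,t_{**}$; what it costs is a second tail estimate that the paper does not need.

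One step you should make explicit: in your second application of Lemma \ref{Lem19} you never verify the hypothesis $\lim_{M\to\infty}\int_{\bigcup_{n>M}[\alpha_n^{rl}(t),\beta_n^{lr}(t)]}(u_x^+)^2(t,\cdot)\,dx=0$ for fixed $t$, and the ``each point lies in at most two intervals'' argument you used for the first application does not transfer verbatim, since the sets $[\alpha_n^{rl}(t),\beta_n^{lr}(t)]$ are \emph{not} essentially disjoint (adjacent components sharing an endpoint $\zeta$ can produce a genuine overlap $[\zeta^{rl}(t),\zeta^{lr}(t)]$ inside the funnel of $\zeta$). The fix is to note $[\alpha_n^{rl}(t),\beta_n^{lr}(t)]\subset[\alpha_n^{l}(t),\alpha_n^{r}(t)]\cup[\alpha_n^{r}(t),\beta_n^{l}(t)]\cup[\beta_n^{l}(t),\beta_n^{r}(t)]$ and that each of the three families (the $\alpha$-funnels, the middle intervals, the $\beta$-funnels) is separately essentially disjoint because leftmost and rightmost characteristics from distinct initial points preserve order; summability of each family's integrals then gives the required vanishing of the tails. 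A similar remark applies to degenerate intervals in your first application (infinitely many components can collapse onto one point), but these form a countable, hence Lebesgue-null, set and do not affect the integrals. With these points spelled out the proof is complete.
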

\begin{proof}

Recall that by Lemma \ref{Lem_supinfChar} the suprema and infima of families of characteristics are again characteristics. Using this property we define characteristics $\alpha_n(t)$ and $\beta_n(t)$ recursively as follows. 
\begin{enumerate}
\item $\alpha_1(t):= \inf_{\alpha \in (\alpha_1,\beta_1)} \{\alpha^{l}(t): \alpha^l(t_0) = \alpha\}$.
\item $\beta_1(t):= \sup_{\beta \in (\alpha_1,\beta_1)} \{\beta^{r}(t): \beta^r(t_0) = \beta\}$.
\item To define $\alpha_n(\cdot)$ let 
$$\alpha^*_n(t):= \inf_{\alpha \in (\alpha_n,\beta_n)} \{\alpha^{l}(t): \alpha^l(t_0) = \alpha\}$$
Suppose there exist $n_*,n_{**}<n$ such that 
$\beta_{n_*} \le \alpha_n < \alpha_{n_{**}}$ and 
\begin{eqnarray*}
\beta_{n_*} = \max\{\beta_{n'}: n'<n, \beta_{n'} \le \alpha_n\},\\
\alpha_{n_{**}} = \min\{\alpha_{n'}: n'<n, \alpha_{n'} > \alpha_n\}.
\end{eqnarray*}
Let
\begin{eqnarray*}
t_* &:=& \inf\{t > t_0: \alpha_n^{*}(t) < \beta_{n_*}(t)\},\\
t_{**} &:=& \inf\{t > t_0: \alpha_n^{*}(t) = \alpha_{n_{**}}(t)\},
\end{eqnarray*}
where both $t_*$ and $t_{**}$ can assume the value $+\infty$.
Define (see Fig. \ref{Fig7})
$$\alpha_n(t):=
\begin{cases} 
\alpha_n^{*}(t) &\mbox{ for } t\le \min(t_*,t_{**}), \\
\begin{cases}
\beta_{n_*}(t) &\mbox{ if } t_* \le t_{**}\\
\alpha_{n_{**}}(t) &\mbox{ if } t_*>t_{**}
\end{cases}
&\mbox{ for } t> \min(t_*,t_{**}).
\end{cases} $$
The same defining formula holds if $n_*$ does not exist (then we put $t_*=\infty$ and $\beta_{n_*}$ is irrelevant) or $n_{**}$ does not exist (then we put $t_{**}=\infty$ and $\alpha_{n_{**}}$ is irrelevant).
\item To define $\beta_n(\cdot)$ let 
$$\beta^*_n(t):= \sup_{\beta \in (\alpha_n,\beta_n)} \{\beta^{r}(t): \beta^r(t_0) = \beta\}.$$
Suppose there exists $n_{**}<n$ such that 
$\beta_n \le \alpha_{n_{**}}$ and 
\begin{eqnarray*}
\alpha_{n_{**}} = \min\{\alpha_{n'}: n'<n, \alpha_{n'} \ge \beta_n\}.
\end{eqnarray*}
Let
\begin{eqnarray*}
t_{**} &:=& \inf\{t > t_0: \beta_n^{*}(t) > \alpha_{n_{**}}(t)\},
\end{eqnarray*}
where $\alpha_n(t)$ has been defined in the previous step and both $t_*$ and $t_{**}$ can assume the value $+\infty$.
Define (see Fig. \ref{Fig7})
$$\beta_n(t):=
\begin{cases} 
\beta_n^{*}(t) &\mbox{ for } t\le t_{**}, \\
\alpha_{n_{**}}(t)&\mbox{ for } t> t_{**}.
\end{cases} $$
The same defining formula holds if $n_{**}$ does not exist (then we put $t_{**}=\infty$ and $\alpha_{n_{**}}$ is irrelevant).

\begin{figure}[h!]
\center
\includegraphics[width=14cm]{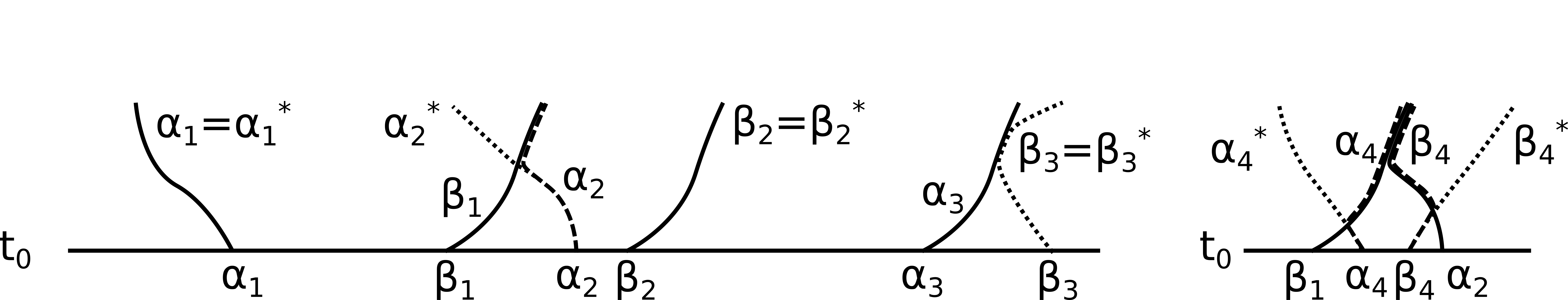}
\caption{Illustration of the process of definition of characteristics $\alpha_n$, $\beta_n$. Characteristics $\alpha_1$ and $\beta_1$ are defined without any intersection and thus coincide with $\alpha_1^*$ and $\beta_1^*$, respectively. Characteristic $\alpha_2^*$ (dotted line) crosses $\beta_1$ and thus $\alpha_2$ follows, from the timepoint of intersection on, characteristic $\beta_1$. Characteristic $\beta_n$ can also cross $\alpha_n$, which is the case for $\beta_3$. In this case, however $\beta_n$ does not follows $\alpha_n$, yet remains $\beta_n^*$. An example of a more complex situation when both $\alpha_n$ and $\beta_n$ cross other characteristics is shown in the right panel for $\alpha_4$ and $\beta_4$. Here $\alpha_4,\beta_4$ follow, beginning from some timepoint, the characterstics $\beta_1$ and $\alpha_2$, respectively. As $\beta_1$ and $\alpha_2$ coincide from some timepoint on, so do $\alpha_4$ and $\beta_4$. For clarity of the picture, the intersections of characteristics are depicted as transversal. In fact, however, the characteristics can only intersect tangentially.}
\label{Fig7}
\end{figure}
\end{enumerate}
The definition is designed in such a way that if $\alpha_{n_1}<\beta_{n_1} \le \alpha_{n_2}<\beta_{n_2}$ for some $1 \le n_1,n_2 \le N$ then 
\begin{equation*}
\alpha_{n_1}(t)<\beta_{n_1}(t) \le \alpha_{n_2}(t)<\beta_{n_2}(t)
\end{equation*} 
for every $t \ge t_0$ and
\begin{equation*}
\bigcup_{n=1}^{\infty} (\alpha_n(t),\beta_n(t)) \subset B(t) \subset \bigcup_{n=1}^{\infty} [\alpha_n(t),\beta_n(t)] 
\end{equation*}
(compare Lemma \ref{Lem14}). Hence, up to a set of measure $0$ we have
\begin{equation*}
B(t) = \bigcup_{n=1}^{\infty} (\alpha_n(t),\beta_n(t)).
\end{equation*}
and the union above is disjoint. Take now 
${\mathcal B}^{1/N}:= \bigcup_{n=N}^{\infty} \{\gamma(\cdot): \gamma(t_0) \in (\alpha_n,\beta_n) \mbox{ and } \gamma(t) \in [\alpha_n(t),\beta_n(t)] \mbox{ for every } t \in (t_0,\infty) \}.$
Observe that the family $\{{\mathcal B}^{\delta}\}_{\delta \in \Delta}$ with $\Delta= \{1/N: N=1,2,\dots\}$ fulfils the assumptions of Lemma \ref{Lem19}. Hence,
\begin{equation}
\label{Eq_contconv0}
\lim_{N \to \infty} \left( \limsup_{t \to t_0^+} \int_{B^{1/N}(t)} (u_x^+)^2(t,x) dx\right) = 0.
\end{equation}
Let now
\begin{equation*}
B_N:= \bigcup_{n=1}^{N} (\alpha_n,\beta_n).
\end{equation*}
For every $n \in \{1,2, \dots,N\}$ and every $t \ge t_0$ we have $$(\alpha_n^{r}(t),\beta_n^l(t)) \subset (\alpha_n(t),\beta_n(t)) \subset (\alpha_n,\beta_n)(t) \subset [\alpha_n^{rl}(t),\beta_n^{lr}(t)],$$
where the first inclusion follows from the definition of rightmost and leftmost characteristics, the second inclusion follows by the definition of $\alpha_n(t),\beta_n(t)$ and the last inclusion is due to Lemma \ref{Lem14}. Consequently, by Proposition \ref{Prop16} and Lemma \ref{Lem_44},
\begin{eqnarray*}
\mu^+\left(t_0,(\alpha_n,\beta_n)\right) = \lim_{t \to t_0^+} \int_{(\alpha_n(t),\beta_n(t))} (u_x^+)^2 (t,x) dx - \int_{(\alpha_n,\beta_n)} (u_x^+)^2(t_0,x)^2dx.
\end{eqnarray*}
Hence, for every $N<\infty$
\begin{eqnarray*}
\mu^+\left(t_0,B_N\right) = \lim_{t \to t_0^+} \int_{\bigcup_{n=1}^{N} (\alpha_n(t),\beta_n(t))} (u_x^+)^2 (t,x) dx - \int_{B_N} (u_x^+)^2(t_0,x)dx.
\end{eqnarray*}
Passing to the limit $N \to \infty$ we obtain:
\begin{equation*}
 \lim_{N \to \infty}\lim_{t \to t_0^+} \int_{\bigcup_{n=1}^{N} (\alpha_n(t),\beta_n(t))} (u_x^+)^2 (t,x) dx = \mu^+\left(t_0,B\right) + \int_{B} (u_x^+)^2(t_0,x)dx.
\end{equation*}
Moreover, by \eqref{Eq_contconv0} we have
\begin{equation*}
\lim_{N \to \infty} \left( \limsup_{t \to t_0^+} \int_{\bigcup_{n=N+1}^{\infty} (\alpha_n(t),\beta_n(t))} (u_x^+)^2(t,x) dx\right) = 0.
\end{equation*}
Fix $\epsilon>0$. By the two above convergences there exists $N_{\epsilon}$ so large that there exists $t_\epsilon>t_0$ such that for $t \in [t_0,t_\epsilon]$
\begin{eqnarray*}
\left|\int_{\bigcup_{n=1}^{N_\epsilon} (\alpha_n(t),\beta_n(t))} (u_x^+)^2 (t,x) dx - \left[\mu^+\left(t_0,B\right) + \int_{B} (u_x^+)^2(t_0,x)dx\right]\right| < \epsilon/2,\\
\int_{\bigcup_{n=N^{\epsilon}+1}^{\infty} (\alpha_n(t),\beta_n(t))} (u_x^+)^2(t,x) dx < \epsilon/2.
\end{eqnarray*}
Hence, 
\begin{equation*}
\left|\int_{\bigcup_{n=1}^{\infty} (\alpha_n(t),\beta_n(t))} (u_x^+)^2 (t,x) dx - \left[\mu^+\left(t_0,B\right) + \int_{B} (u_x^+)^2(t_0,x)dx\right]\right| < \epsilon,
\end{equation*}
which means that for every $\epsilon>0$ there exists $t_\epsilon>t_0$ such that for $t \in [t_0,t_\epsilon]$
\begin{equation*}
\left|\int_{B(t)} (u_x^+)^2 (t,x) dx - \left[\mu^+\left(t_0,B\right) + \int_{B} (u_x^+)^2(t_0,x)dx\right]\right| < \epsilon.
\end{equation*}
This concludes the proof of \eqref{Eq_cont6}.
The proof of \eqref{Eq_cont7} is similar and in fact simpler since the sets $(\alpha_n^r(t),\beta_n^l(t))$ are pairwise disjoint for different $n$.
\end{proof}

\begin{remark}
Characteristic $\alpha_n^*$ can be interpreted as follows. Let $$\tau:= \inf\{t>t_0: \mbox{ there exists } \alpha \in (\alpha_n,\beta_n) \mbox{ such that } \alpha^l(t)=\alpha_n^r(t)\}.$$
There are now two possibilities. Either there exists $\alpha \in (\alpha_n,\beta_n)$ such that $\alpha(\tau)=\alpha_n^r(\tau)$ and then
\begin{equation*}
\alpha_n^*(t) = 
\begin{cases}
\alpha_n^r(t) &\mbox{ for } t_0 \le t \le \tau,\\
(\alpha_n^r(\tau))^{l}(t) &\mbox{ for } t > \tau
\end{cases}
\end{equation*}
or, otherwise,
\begin{equation*}
\alpha_n^*(t) = 
\begin{cases}
\alpha_n^r(t) &\mbox{ for } t_0 \le t \le \tau,\\
(\alpha_n^r(\tau))^{rl}(t) &\mbox{ for } t > \tau.
\end{cases}
\end{equation*}
\end{remark}

\noindent{\bf ${\bf B}$ - arbitrary compact set}
\begin{lemma}
\label{Lem118}
Let $u$ be a weak solution of \eqref{eq_WeakCH1}-\eqref{eq_WeakCH3} and let $K \subset \mathbb{R}$ be an arbitrary compact set. Fix $t_0,T$ such that $0 \le t_0 < T$. Let $U=(\underline{U},\overline{U})$ be a bounded open interval satisfying $K \subset U$ and so large that 
\begin{equation}
\label{Eq_supKinfK}
\begin{cases}
[\sup(K)+2(T-t_0)\sup(|u|)] \in U,\\ [\inf(K)-2(T-t_0)\sup(|u|)] \in U.
\end{cases}
\end{equation}
Let the open set $U \backslash K$ have the representation
\begin{equation*}
U \backslash K = \bigcup_{n=1}^{\infty}   (\alpha_n,\beta_n),
\end{equation*}
where the open intervals $(\alpha_n,\beta_n)$ are pairwise disjoint. Then for $t\in[t_0,T]$ 
\begin{equation}
\label{eq_Koft}
K(t) = (\underline{U}^r(t),\overline{U}^l(t)) \backslash \bigcup_{n=1}^{\infty}   (\alpha^r_n(t),\beta^l_n(t)),
\end{equation}
where we adopt the convention $(\alpha^r_n(t),\beta^l_n(t)) = \emptyset$ if $\alpha^r_n(t)>\beta^l_n(t)$.
\end{lemma}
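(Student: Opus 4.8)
The plan is to prove the two set inclusions in \eqref{eq_Koft} separately, relying throughout on three elementary facts about characteristics. First, the monotonicity of extremal characteristics in their initial point: if $\zeta_1 \le \zeta_2$ then $\zeta_1^r(t) \le \zeta_2^r(t)$ and $\zeta_1^l(t) \le \zeta_2^l(t)$ for all $t \ge t_0$ (the standard crossing-and-concatenation argument together with Lemma \ref{Lem_supinfChar}). Second, the existence of leftmost and rightmost \emph{backward} characteristics $x^{lb}, x^{rb}$ issuing from a point $(t,x)$. Third, the fact that a continuous concatenation of two characteristics at a point where they meet is again a characteristic (by \cite[Lemma 3.1]{DafHS}, exactly as already used in the proof of Lemma \ref{Lem_CDeltaChar}). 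I write $R(t)$ for the right-hand side of \eqref{eq_Koft}.

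For the inclusion $K(t) \subset R(t)$, take $x = \gamma(t)$ with $\gamma(t_0) = \zeta \in K$. Condition \eqref{Eq_supKinfK} together with the finite propagation speed $\sup(|u|)$ gives $\underline{U}^r(t) \le \underline{U} + (T-t_0)\sup(|u|) < \inf(K) - (T-t_0)\sup(|u|) \le \gamma(t) = x$, and symmetrically $x < \overline{U}^l(t)$, so $x \in (\underline{U}^r(t), \overline{U}^l(t))$. Moreover, for each $n$ we have $\zeta \notin (\alpha_n,\beta_n)$, hence $\zeta \le \alpha_n$ or $\zeta \ge \beta_n$; in the first case monotonicity yields $x \le \zeta^r(t) \le \alpha_n^r(t)$ and in the second $x \ge \zeta^l(t) \ge \beta_n^l(t)$, so in either case $x \notin (\alpha_n^r(t), \beta_n^l(t))$. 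Thus $x \in R(t)$.

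For the reverse inclusion I argue by contradiction: suppose $x \in R(t)$ but $x \notin K(t)$. Let $x^{lb}$ be the leftmost backward characteristic with $x^{lb}(t) = x$, and set $a := x^{lb}(t_0)$. Since $x \in (\underline{U}^r(t), \overline{U}^l(t))$, monotonicity rules out $a \le \underline{U}$ and $a \ge \overline{U}$ (either would force $x \le \underline{U}^r(t)$ or $x \ge \overline{U}^l(t)$), so $a \in U$; and $a \notin K$, for otherwise $x^{lb}$ would exhibit $x \in K(t)$. Hence $a$ lies in a unique gap, $a \in (\alpha_m, \beta_m)$. The heart of the argument is to show that then $\alpha_m^r(t) < x < \beta_m^l(t)$, contradicting $x \in R(t)$. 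For the left bound, if instead $x \le \alpha_m^r(t)$, then since $x^{lb}(t_0) = a > \alpha_m = \alpha_m^r(t_0)$ the curves $x^{lb}$ and $\alpha_m^r$ must meet at a first time $\tau$; concatenating $\alpha_m^r$ on $[t_0,\tau]$ with $x^{lb}$ on $[\tau,t]$ produces a characteristic reaching $x$ and issuing from $\alpha_m$. If $\alpha_m \in K$ this gives $x \in K(t)$, a contradiction; if $\alpha_m = \underline{U}$ it gives a characteristic from $\underline{U}$, whence $x \le \underline{U}^r(t)$, again impossible. The right bound $x < \beta_m^l(t)$ follows symmetrically by comparing $x^{lb}$ with $\beta_m^l$ (using $a < \beta_m$), concatenating at their meeting time, and invoking either $\beta_m \in K$ or $\beta_m = \overline{U}$.

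The main obstacle is this reverse inclusion, and within it the bookkeeping of the two degenerate possibilities $\alpha_m = \underline{U}$ and $\beta_m = \overline{U}$: these are precisely the endpoints of the extreme gaps that do not belong to $K$, and the $2(T-t_0)\sup(|u|)$ buffer imposed in \eqref{Eq_supKinfK} is there exactly so that in these cases the constraint $x \in (\underline{U}^r(t), \overline{U}^l(t))$ still delivers a contradiction. The first inclusion and the monotonicity and concatenation facts are routine; the only genuinely delicate point is that the single gap containing $a = x^{lb}(t_0)$ must simultaneously control both the left and the right bound on $x$, which is what lets me avoid any appeal to a ``reachable set is an interval'' statement.
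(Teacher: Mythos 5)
Your proof is correct and follows essentially the same route as the paper's: the forward inclusion via monotonicity of extremal characteristics in the initial point, and the reverse inclusion via a backward characteristic from $x$ landing in a gap $(\alpha_m,\beta_m)$, a crossing with $\alpha_m^r$ or $\beta_m^l$, and concatenation. Your explicit treatment of the degenerate cases $\alpha_m=\underline{U}$ and $\beta_m=\overline{U}$ (where the contradiction comes from $x\in(\underline{U}^r(t),\overline{U}^l(t))$ rather than from $x\in K(t)$) is if anything slightly more careful than the paper's one-line dismissal of those cases.
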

\begin{proof}
First we prove the inclusion $\subset$ in \eqref{eq_Koft}. Let $x \in K(t)$. Condition \eqref{Eq_supKinfK} guarantees that $$K(t)\subset (\underline{U}^r(t),\overline{U}^l(t))$$ for $t \in [t_0,T]$ and hence $x \in (\underline{U}^r(t),\overline{U}^l(t))$. Fix now $n$. We will show that $x \notin (\alpha_n^r(t),\beta_n^l(t)$. Indeed, since $x\in K(t)$ there exists $\zeta \in K$ such that $\zeta(t) = x$. As $(\alpha_n,\beta_n)\cap K = \emptyset$ then either $\beta_n \le \zeta$ or $\alpha_n\ge \zeta$.  
In the former case we have $\beta_n^l(t)\le \zeta^l(t)\le \zeta(t)=x$ since $\beta_n^l$ is the leftmost characteristic. In the latter case, similarly, we have $\alpha_n^r(t)\ge \zeta^r(t)\ge \zeta(t)=x$.

To prove the reverse inclusion, let $x \in (\underline{U}^r(t),\overline{U}^l(t)) \backslash \bigcup_{n=1}^{\infty}   (\alpha^r_n(t),\beta^l_n(t))$. Then there exists $\zeta \in U$ and a characteristic $\zeta(\cdot)$ such that $\zeta(t)=x$. If  $\zeta \in K$ then $x \in K(t)$ and the proof is finished. If $\zeta \notin K$ then $\zeta \in (\alpha_n,\beta_n)$ for some $n$. Since, however, $\zeta(t) \notin (\alpha_n^r(t),\beta_n^l(t))$, there exists time $\tau \in (t_0,t]$ such that either $\zeta(\tau) = \alpha_n^r(\tau)$ (see Fig. \ref{Fig8}) or $\zeta(\tau) = \beta_n^l(\tau)$. In the former case we observe that due to \eqref{Eq_supKinfK}, $\alpha_n \neq  \underline{U}$ and hence $\alpha_n \in K$. 
The characteristic defined by
\begin{equation*}
\alpha_n(s) = \begin{cases}
\alpha_n^r(s) &\mbox{ if } s \in [t_0,\tau],\\
\zeta(t) &\mbox{ if } s\in (\tau,t]
\end{cases}
\end{equation*}
satisfies $\alpha_n(t_0) \in K$ and $\alpha_n(t)=x$. Hence, $x \in K(t)$. The proof in the case $\zeta(\tau) = \beta_n^l(\tau)$ is analogous.

\begin{figure}[h!]
\center
\includegraphics[width=6cm]{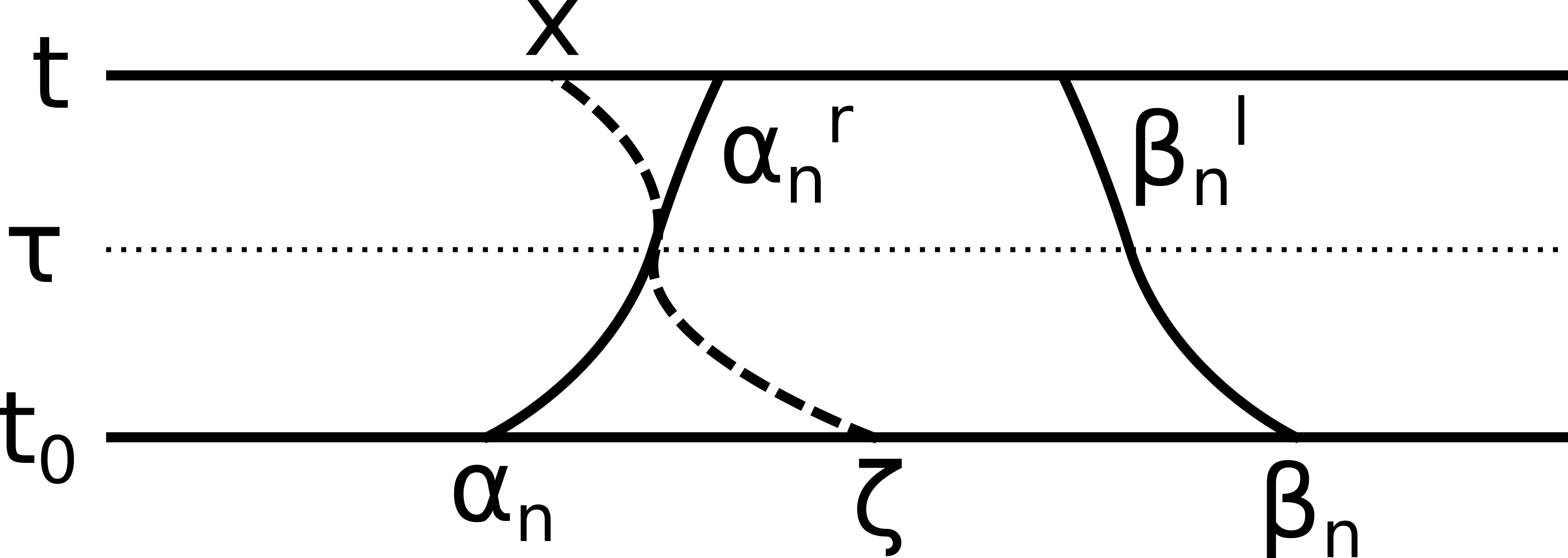}
\caption{Illustration of the proof of Lemma \ref{Lem118}. If $\zeta \in (\alpha_n,\beta_n)$ and $x = \zeta(t) \notin (\alpha_n^r(t),\beta_n^l(t))$ then, due to geometrical reasons, $\alpha_n^r(\cdot)$ and $\zeta(\cdot)$ have to cross. Hence, the concatenation of $\alpha_n^r(\cdot)$ (until the crossing time $\tau$) and $\zeta(\cdot)$ (from $\tau$ on) is an example of characteristic emanating from $K$ and ending in $x$.}
\label{Fig8}
\end{figure}

\end{proof}

\begin{proposition}
Let $u$ be a weak solution of \eqref{eq_WeakCH1}-\eqref{eq_WeakCH3} and let $K$ be an arbitrary compact subset of $\mathbb{R}$. Fix $t_0 \ge 0$.
Then 
\begin{equation}
\label{Eq_cont8}
\mu^+(t_0,K) = \lim_{t \to t_0^+} \int_{K(t)} (u_x^+)^2 dx - \int_K (u_x^+)^2(t_0,x) dx,
\end{equation} 
where $K(t)$ is the thick pushforward.
\end{proposition}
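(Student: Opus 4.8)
The plan is to reduce the compact case to the already-established open-set formula by complementation. Fix an open interval $U=(\underline{U},\overline{U})$ containing $K$ and satisfying the separation condition \eqref{Eq_supKinfK}, and write $U\setminus K=\bigcup_{n=1}^{\infty}(\alpha_n,\beta_n)$ as a countable disjoint union of open intervals, exactly as in Lemma \ref{Lem118}. Since $K$ and $U\setminus K$ are disjoint Borel sets whose union is $U$, and since $\mu^+(t_0,\cdot)$ is a nonnegative measure of finite total mass (being dominated by the uniformly bounded energy, as $u\in L^{\infty}([0,\infty),H^1(\mathbb{R}))$), countable additivity gives
\begin{equation*}
\mu^+(t_0,K)=\mu^+(t_0,U)-\mu^+(t_0,U\setminus K).
\end{equation*}
Both $U$ and $U\setminus K$ are bounded open sets, so each term on the right is computable by the open-set result already proven.

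Next I would apply formula \eqref{Eq_cont7}. For the single interval $U$ it yields
\begin{equation*}
\mu^+(t_0,U)=\lim_{t\to t_0^+}\int_{(\underline{U}^r(t),\overline{U}^l(t))}(u_x^+)^2(t,x)\,dx-\int_U(u_x^+)^2(t_0,x)\,dx,
\end{equation*}
while for $U\setminus K$ it yields
\begin{equation*}
\mu^+(t_0,U\setminus K)=\lim_{t\to t_0^+}\int_{\bigcup_{n=1}^{\infty}(\alpha_n^r(t),\beta_n^l(t))}(u_x^+)^2(t,x)\,dx-\int_{U\setminus K}(u_x^+)^2(t_0,x)\,dx.
\end{equation*}
Subtracting, the two $t_0$-integrals combine into $\int_K(u_x^+)^2(t_0,x)\,dx$ since $U$ is the disjoint union of $K$ and $U\setminus K$; it remains to identify the difference of the two limit terms with $\lim_{t\to t_0^+}\int_{K(t)}(u_x^+)^2(t,x)\,dx$.

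For this last identification I would invoke Lemma \ref{Lem118}, which represents the thick pushforward as $K(t)=(\underline{U}^r(t),\overline{U}^l(t))\setminus\bigcup_{n}(\alpha_n^r(t),\beta_n^l(t))$ for $t\in[t_0,T]$. Because $\underline{U}<\alpha_n$ and $\beta_n<\overline{U}$ (here condition \eqref{Eq_supKinfK} is used to exclude the endpoints $\underline{U},\overline{U}$, which lie in $K$), order-preservation of rightmost and leftmost characteristics gives $\underline{U}^r(t)\le\alpha_n^r(t)$ and $\beta_n^l(t)\le\overline{U}^l(t)$, so each interval $(\alpha_n^r(t),\beta_n^l(t))$ is contained in $(\underline{U}^r(t),\overline{U}^l(t))$; moreover these intervals are pairwise disjoint for distinct $n$. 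Hence for every such $t$
\begin{equation*}
\int_{K(t)}(u_x^+)^2(t,x)\,dx=\int_{(\underline{U}^r(t),\overline{U}^l(t))}(u_x^+)^2(t,x)\,dx-\int_{\bigcup_{n}(\alpha_n^r(t),\beta_n^l(t))}(u_x^+)^2(t,x)\,dx.
\end{equation*}
Since both limits on the right-hand side exist (by the open-set result), the left-hand side also converges as $t\to t_0^+$, to the difference of those two limits, which closes the computation and yields \eqref{Eq_cont8}.

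The only genuinely delicate point is this last bookkeeping step: one must verify that the set-difference representation of $K(t)$ from Lemma \ref{Lem118} is, for each fixed $t$ near $t_0$, compatible with the decomposition of the measure -- that is, that the containment $(\alpha_n^r(t),\beta_n^l(t))\subset(\underline{U}^r(t),\overline{U}^l(t))$ together with pairwise disjointness makes the subtraction of integrals exact (no mass is double-counted, the shared endpoints forming a null set), and that interchanging the set difference with the limit $t\to t_0^+$ is legitimate. Both issues are settled by the containment and disjointness noted above combined with the existence of the individual limits; everything else is a routine recombination of the open-interval and open-set formulas.
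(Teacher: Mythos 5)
Your proposal is correct and follows essentially the same route as the paper: enclose $K$ in an interval $U$ satisfying \eqref{Eq_supKinfK}, apply the open-set formula (Corollary \ref{Cor_muplus} / \eqref{Eq_cont7}) to $U$ and to $U\setminus K=\bigcup_n(\alpha_n,\beta_n)$, subtract, and identify the resulting limit term via the representation $K(t)=(\underline{U}^r(t),\overline{U}^l(t))\setminus\bigcup_n(\alpha_n^r(t),\beta_n^l(t))$ from Lemma \ref{Lem118}. The paper compresses the final bookkeeping into ``subtracting these two equalities and using Lemma \ref{Lem118} we conclude,'' whereas you spell out the containment and disjointness of the intervals $(\alpha_n^r(t),\beta_n^l(t))$ that make the subtraction of integrals exact; this is a correct elaboration, not a different argument.
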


\begin{proof}
Let $U = (\underline{U},\overline{U})$ be a bounded open interval such that $K \subset U$ and so large that the assumptions of Lemma \ref{Lem118} are satisfied. By Corollary \ref{Cor_muplus}

\begin{equation*}
\mu^+(t_0,U) = \lim_{t \to t_0^+} \int_{(\underline{U}^r(t),\overline{U}^l(t))} (u_x^+)^2 (t,x) dx - \int_{U} (u_x^+)^2(t_0,x)dx.
\end{equation*}
On the other hand, $U = K \cup A$ where $A = U\backslash K =  \bigcup_{n=1}^{\infty} (\alpha_n,\beta_n)$. Hence, by \eqref{Eq_cont7}

\begin{equation*}
\mu^+(t_0,A) = \lim_{t \to t_0^+} \int_{\bigcup_{n=1}^{\infty} (\alpha_n^r(t),\beta_n^l(t))  } (u_x^+)^2(t,x) dx - \int_A (u_x^+)^2(t_0,x) dx.
\end{equation*} 
Subtracting these two equalities and using Lemma \ref{Lem118} we conclude.
\end{proof}

\noindent{\bf ${\bf B}$ - arbitrary Borel set}
\begin{proof}[Proof of Theorem \ref{thB}]
Fix $\epsilon>0$ By regularity of the measure $\mu^+$ there exists a compact set $K^{\epsilon}$ and an open set $A^{\epsilon}$ such that 
\begin{equation*}
K^\epsilon \subset B \subset A^\epsilon
\end{equation*}
and 
\begin{eqnarray*}
\int_{A^\epsilon \backslash B} u_x^+(t_0,x)^2 dx < \epsilon,\\
\mu^+(t_0,A^\epsilon \backslash B) < \epsilon,\\
\int_{B \backslash K^\epsilon} u_x^+(t_0,x)^2 dx < \epsilon,\\
\mu^+(t_0,B \backslash K^\epsilon) < \epsilon.
\end{eqnarray*}
We calculate
\begin{eqnarray*}
\limsup_{t \to t_0^+} \int_{B(t)} (u_x^+)^2(t,x)dx &\le& \lim_{t \to t_0^+} \int_{A^{\epsilon}(t)} (u_x^+)^2(t,x)dx\\ &=& \mu^+(t_0,A^{\epsilon}) + \int_{A^{\epsilon}} (u_x^+)^2(t_0,x) dx\\ &\le& \mu^+(t_0,B) + \epsilon + \int_{B} (u_x^+)^2(t_0,x) dx + \epsilon,
\end{eqnarray*}
where we used \eqref{Eq_cont6} to pass to the limit with $t$.
Similarly,
\begin{eqnarray*}
\liminf_{t \to t_0^+} \int_{B(t)} (u_x^+)^2(t,x)dx &\ge& \lim_{t \to t_0^+} \int_{K^{\epsilon}(t)} (u_x^+)^2(t,x)dx\\ &=& \mu^+(t_0,K^{\epsilon}) + \int_{K^{\epsilon}} (u_x^+)^2(t_0,x) dx\\ &\ge& \mu^+(t_0,B) - \epsilon + \int_{B} (u_x^+)^2(t_0,x) dx - \epsilon,
\end{eqnarray*}
where we used \eqref{Eq_cont8} to pass to the limit with $t$.  Passing $\epsilon \to 0$ we conclude.
\end{proof}

\section{Proofs of Theorems \ref{Th_muac0}, \ref{th_mu0dis}, \ref{Th_countably} and \ref{Th_nomaxdissip}}
\label{Sec_proofsConclusions}
\begin{proof}[Proof of Theorem \ref{Th_muac0}]
Decompose $$\mu^+(t_0,dx) = g \mathcal{L}^1(dx) + (\mu^+)^{sing}(t_0,dx)$$ where $g$ is the density of $(\mu^+(t_0,dx))^{ac}$ with respect to the one-dimensional Lebesgue measure and $(\mu^+)^{sing}$ denotes the singular, with respect to $\mathcal{L}^1$, part of measure $\mu^+(t_0,dx)$. Suppose $g \neq 0$. Then there exists a Borel set $B$ and $\epsilon > 0$ such that 
\begin{eqnarray*}
(\mu^+)^{sing}(t_0,B)  &=& 0,\\
\mathcal{L}^1(B) &\ge& \epsilon, \\
g &\ge& \epsilon \mbox{ on } B.
\end{eqnarray*}
Furthermore, due to Proposition \ref{Prop_MainCH} there exists a subset $B_1 \subset B$ and $\tau>0$ such that 
\begin{itemize}
\item $\mathcal{L}^1(B_1) > \epsilon / 2$,
\item the characteristics originating in $B_1$ are unique on $[t_0,t_0+\tau]$ 
\end{itemize}
and for every $\zeta \in B_1$  and  $t \in [t_0,t_0+\tau]$
\begin{itemize}
\item $|u_x(t,\zeta(t))| < 1/\tau$, where $\zeta(\cdot)$ is the unique characteristic satisfying $\zeta(t_0)=\zeta$,
\item the differential equation
\begin{equation*}
\dot{v} = u^2 - \frac 1 2 v^2 - P,
\end{equation*} 
is satisfied, where $v=v(t)=u_x(t,\zeta(t))$, $u=u(t)=u(t,\zeta(t))$, $P=P(t)=P(t,\zeta(t))$.
\end{itemize} 
The last condition  implies, in particular, that for every $\zeta \in B_1$ we have 
\begin{equation*}
\lim_{t \to t_0^+}  u_x^+(t,\zeta(t)) = u_x^+(t_0,\zeta(t_0)).
\end{equation*}
Consequently, using the representation formula from Theorem \ref{thB} and the change of variables formula from Proposition \ref{Prop_ChangeOfV}
we obtain
\begin{eqnarray*}
\mu^+(t_0,B_1) &=& \lim_{t \to t_0^+} \int_{B_1(t)} (u_x^+)^2(t,x)dx - \int_{B_1}(u_x^+)^2(t_0,x)dx \\
&=& \lim_{t \to t_0^+} \int_{B_1} (u_x^+)^2(t,\zeta(t))  e^{\int_{t_0}^{t} u_x(s,\zeta(s))ds}d\zeta - \int_{B_1}(u_x^+)^2(t_0,\zeta) d\zeta \\
  &=& 0, 
\end{eqnarray*}
where the last equality follows by the Lebesgue dominated convergence theorem and the bound
$$(u_x^+)^2(t,\zeta(t)) e^{\int_{t_0}^{t} u_x (s,\zeta(s))ds} \le \tau^{-2} e^{\tau/\tau} = \tau^{-2} e,$$ which holds for every $\zeta \in B_1$ and $t \in [t_0,t_0+\tau]$.
On the other hand,
$$\mu^+(t_0,B_1) \ge \int_{B_1} g(x) dx \ge \epsilon^2/2,$$ which gives contradiction and concludes the proof for $\mu^+$. The proof for $\mu^-$ follows dually by considering the 'backward' solution $u^{t_0 b}(t,x):=-u(t_0-t,x)$ and using the result for $\mu^+$.
\end{proof}

\begin{proof}[Proof of Theorem \ref{th_mu0dis}]
We distinguish two cases, $t_0=0$ and $t_0>0$ as the proofs in those cases are different. Both proceed by obtaining a contradiction if we assume that $\mu^+(t_0,\cdot) \neq 0$, however in the former we obtain a contradiction with the weak energy condition and in the latter with the Oleinik-type criterion from Definition \ref{Def_dissipative}.

{\bf Case} \noindent $\bf{t_0=0}$. 
Suppose $\mu^+(t_0,\mathbb{R}) = S>0$. For a test function $\phi \in C_c(\mathbb{R})$ we calculate, using Theorem \ref{Th_Cadlag} and Definition \ref{Def_MAD},
\begin{eqnarray*}
\lim_{t \to t_0^+} \int_{\mathbb{R}} \phi(x)(u_x)^2(t,x)dx &=& \lim_{t \to t_0^+} \int_{\mathbb{R}} \phi(x)(u_x^+)^2(t,x)dx+\lim_{t \to t_0^+} \int_{\mathbb{R}} \phi(x)(u_x^-)^2(t,x)dx \\
&=& \int_{\mathbb{R}} \phi(x)(u_x)^2(t_0,x)dx+\int_{\mathbb{R}} \phi(x) \mu^+(t_0,dx). 
\end{eqnarray*}
Take $\phi = \phi^K$ of the following form:
\begin{equation*}
\phi^K(x) := \begin{cases}
0 &\mbox{ if } x \in (-\infty,-K-1),\\
x+K+1 &\mbox{ if } x \in [-K-1,-K),\\
1 &\mbox{ if } x \in [-K,K),\\
K+1-x &\mbox{ if } x \in [K,K+1),\\
0 &\mbox{ if } x \in [K+1,\infty),
\end{cases}
\end{equation*}
where $K$ is so big that 
\begin{eqnarray*}
\int_{\mathbb{R}} \phi(x)\mu^+(t_0,dx) \ge S-S/4, \\ 
\int_{\mathbb{R}} \phi(x)(u_x)^2(t_0,x)dx \ge \int_{\mathbb{R}} (u_x)^2(t_0,x)dx - S/4, \\ 
\int_{\mathbb{R}} \phi(x)u^2(t_0,x)dx \ge \int_{\mathbb{R}} u^2(t_0,x)dx - S/4.
\end{eqnarray*}
Then
\begin{eqnarray*}
\liminf_{t \to t_0^+} \int_{\mathbb{R}}(u_x)^2(t,x)dx \ge \lim_{t \to t_0^+} \int_{\mathbb{R}} \phi(x)(u_x)^2(t,x)dx &\ge& \int_{\mathbb{R}} (u_x)^2(t_0,x)dx- S/4 + (S - S/4)
\end{eqnarray*}
and 
\begin{eqnarray*}
\liminf_{t \to t_0^+} \int_{\mathbb{R}}u^2(t,x)dx \ge \lim_{t \to t_0^+} \int_{\mathbb{R}} \phi(x)u^2(t,x)dx = \int_{\mathbb{R}} \phi(x)u^2(t_0,x)dx \ge \int_{\mathbb{R}} u^2(t_0,x)dx - S/4.
\end{eqnarray*}
Combination of these two convergences leads to
\begin{equation*}
\liminf_{t \to t_0^+} \int_{\mathbb{R}} [(u_x)^2(t,x) + u^2(t,x)] dx \ge \int_{\mathbb{R}} [(u_x)^2(t_0,x) + u^2(t_0,x)] dx + S/4,
\end{equation*}
which contradicts the weak energy condition from Definition \ref{Def_dissipative} and hence implies that $u$ is not dissipative.

{\bf Case} ${\bf t_0>0}$. 
Since $\mu^+(t_0,x) \perp \mathcal{L}^1$ by Theorem \ref{Th_muac0}, there exists a set $C$ (without loss of generality bounded) satisfying 
\begin{itemize}
\item $\mathcal{L}^1(C)=0,$
\item $\mu^+(t_0,\mathbb{R} \backslash C)=0.$
\end{itemize}
Suppose $\mu^+(t_0,C)=S>0$ and fix $\epsilon>0$. By regularity of the measure $\mathcal{L}^1$ there exists an open set $U = \bigcup_{n=1}^{\infty} (\alpha_n,\beta_n)$ such that
\begin{itemize}
\item $C \subset U,$
\item $\mathcal{L}^1(U) < \epsilon,$
\item $(\alpha_n,\beta_n)$ are pairwise disjoint.
\end{itemize}
Let %$U_N = \bigcup_{n=1}^{N} (\alpha_n,\beta_n)$ and let $N$ 
$N$ be so large that $$\mu^+\left(t_0,\bigcup_{n=1}^{N} (\alpha_n,\beta_n)\right) > S/2.$$
Next, take $\delta$ so small that for $U^{\delta}_N := \bigcup_{n=1}^{N} (\alpha_n+\delta,\beta_n-\delta)$ we have $$\mu^+(t_0,U_N^{\delta})>S/4.$$
Taking
\begin{equation*}
\phi^\epsilon(x) := \begin{cases}
1 &\mbox{ if } x \in U_N^{\delta},\\
(x-\alpha_n)/\delta &\mbox{ if } x \in (\alpha_n, \alpha_n+\delta],\\
(\beta_n-x)/\delta &\mbox{ if } x \in [\beta_n-\delta, \beta_n),\\
0 &\mbox{ otherwise}
\end{cases}
\end{equation*}
we obtain
\begin{eqnarray}
\label{Eq_limS2}
\lim_{t \to t_0^+} \int_{\mathbb{R}} (u_x^+)^2(t_0,x) \phi^{\epsilon}(x) dx &=& \int_{\mathbb{R}} \phi^{\epsilon}(x)d\mu^+(t_0,x) + \int_{\mathbb{R}} (u_x^+)^2 (t_0,x) \phi^{\epsilon}(x) dx \nonumber\\
 &\ge& S/4 + \int_{\mathbb{R}} (u_x^+)^2 (t_0,x) \phi^{\epsilon}(x) dx \ge S/4.
\end{eqnarray}
Noting that $0 \le \phi^{\epsilon}(x) \le 1$ for every $x \in \mathbb{R}$ and $\mathcal{L}^1 (\{x \in \mathbb{R}: \phi^{\epsilon}(x) \neq 0 \})< \epsilon$ we pass to the limit $\epsilon \to 0$ in \eqref{Eq_limS2} and obtain that $u_x^+$ is unbounded in the neighborhood of $\{t_0\} \times C$, which contradicts dissipativity.
\end{proof}

\begin{proof}[Proof of Theorem \ref{Th_countably}]
The function
$$t \mapsto \int_{[\alpha^l(t),\beta^r(t)]} (u_x^+)^2(t,x)dx$$
belongs to $BV_{loc}$ by \cite[Theorem 2.7]{CHGJ}. Hence, it has only at most countably many discontinuities and thus $\mu^+([\alpha^l(t),\beta^r(t)]) \neq 0$ for at most countably many $t$. Considering $\alpha_n := -n$ and $\beta_n :=n$ we obtain, using the $\sup(|u|)$ bound on the speed of characteristics, that $\mu^+([-n+\sup(u)t,n-\sup(u)t]) \neq 0$ for at most countably many $t$. Since this holds for every $n = 1,2, \dots$, we obtain $\mu^+(t,\mathbb{R}) \neq 0$ for at most countably many $t$.
\end{proof}

\begin{proof}[Proof of Theorem \ref{Th_nomaxdissip}]
By the theory of Bressan-Constantin \cite{BC} there exists a 'conservative' solution $\hat{u}$ of \eqref{eq_WeakCH1}-\eqref{eq_WeakCH3}, defined on $[t_0,\infty)\times \mathbb{R}$, which satisfies:
\begin{itemize}
\item $\hat{u}(t_0,\cdot) = u(t_0,\cdot)$,
\item $E(\hat{u}(t,\cdot)) \le E(\hat{u}(t_0,\cdot))$ for every $t \ge t_0$,
\item $E(\hat{u}(t,\cdot)) = E(\hat{u}(t_0,\cdot))$ for almost every $t \ge t_0$.
\end{itemize}
Let $\bar{u}$ be defined by 
\begin{equation*}
\bar{u}(t,\cdot) = \begin{cases}
u(t,\cdot) &\mbox{ for } t \in [0,t_0], \\
\hat{u}(t,\cdot) &\mbox{ for } t \in (t_0,\infty).
\end{cases}
\end{equation*}
and let $D>0$ be so large that for $B:=[-D,D]$ we have
\begin{equation*}
\int_B \frac 1 2 (u^2(t_0,x)+u_x^2(t_0,x))dx \ge E(u(t_0,\cdot)) - \frac {\mu^+(t_0,B)}{4}.
\end{equation*}
Then, observing that 
\begin{eqnarray*}
\lim_{t \to t_0^+} \int_{B(t)} u^2(t,x)dx &=&  \int_{B} u^2(t_0,x)dx,\\
\lim_{t \to t_0^+} \int_{B(t)} (u_x^-)^2(t,x)dx &=&  \int_{B} (u_x^-)^2(t_0,x)dx,\\
\lim_{t \to t_0^+} \int_{B(t)} (u_x^+)^2(t,x)dx &=&  \int_{B} (u_x^+)^2(t_0,x)dx + \mu^+(t_0,B),
\end{eqnarray*}
where the first convergence follows by continuity of $u$, the second by \cite[Theorem 2.6]{CHGJ} and the third by Theorem \ref{thB}, we obtain
\begin{eqnarray*}
\limsup_{t \to t_0^+}E(\bar{u}(t,\cdot)) &=&  \limsup_{t \to t_0^+}E(\hat{u}(t,\cdot)) \le E(u(t_0,\cdot))\\ 
&\le& \int_B \frac 1 2 (u^2(t_0,x)+u_x^2(t_0,x))dx + \frac {\mu^+(t_0,B)}{4} \\
&=& \lim_{t \to t_0^+} \int_{B(t)} \frac 1 2 (u^2(t,x) + u_x^2(t,x))dx - \frac{\mu^+(t_0,B)}{2} + \frac {\mu^+(t_0,B)}{4} \\
&\le& \liminf_{t \to t_0^+} E(u(t,\cdot)) - \frac {\mu^+(t_0,B)}{4}.
\end{eqnarray*}
\end{proof}

\end{document}